\documentclass[11pt,reqno]{amsart}
\usepackage{amsthm,amsmath,amssymb,url,cite,color}
\setlength{\textwidth}{6.3in} \setlength{\oddsidemargin}{0pt}
\setlength{\evensidemargin}{0pt}
\newtheorem{thm}{Theorem}
\newtheorem{lem}{Lemma}
\newtheorem{prop}[thm]{Proposition}

\newtheorem{defi}[thm]{Definition}

\newtheorem{remark}{Remark}

\def\exc{\mathop{wex}}
\def\wex{\mathop{wex}}

\def\cro{\mathop {cr}}

\def\D{{\mathcal D}}
\def\S{{\mathcal S}}

\def\blue{\textcolor{blue}}
\def\red{\textcolor{red}}

%%%%%%%%%%%%%%%%%%%%%%%%%%%%%%%%%%%%%%%%%%%%%%%%%%%%%%%%%%%
\begin{document}
\title{
The Combinatorics of Al-Salam-Chihara\\ $q$-Laguerre polynomials}
%~\thanks{\today}
\author{Anisse Kasraoui}
\address{Universit\'e de Lyon,  Universit\'e Lyon 1,
UMR 5208 du CNRS, Institut Camille Jordan\\
F-69622, Villeurbanne Cedex, France}
\email{anisse@math.univ-lyon1.fr}

\author{Dennis Stanton}
\address{School of Mathematics, University of Minnesota, Minneapolis\\
\small Minnesota 55455, USA} \email{stanton@math.umn.edu}

\author{Jiang Zeng}
\address{Universit\'e de Lyon, Universit\'e Lyon 1,
UMR 5208 du CNRS, Institut Camille Jordan\\
 F-69622, Villeurbanne Cedex, France}
\email{zeng@math.univ-lyon1.fr}

\begin{abstract}
We describe various aspects of the Al-Salam-Chihara $q$-Laguerre
polynomials. These include combinatorial descriptions of the
polynomials, the moments, the orthogonality relation and a
combinatorial interpretation of the linearization coefficients. It
is remarkable that the corresponding moment sequence appears also in
the recent work of Postnikov and Williams on enumeration of totally
positive Grassmann cells.
\end{abstract}

\maketitle {\small \tableofcontents }
 \noindent{\it Keywords}:
$q$-Laguerre polynomials, Al-Salam-Chihara polynomials, $y$-version
of $q$-Stirling numbers of the second kind, linearization
coefficients.
 \vskip 0.5cm \noindent{\bf MR Subject
Classifications}: Primary 05A18; Secondary 05A15, 05A30.
%%%%%%%%%%%%%%%%%%%
\section{Introduction}
The monic simple Laguerre polynomials $L_n(x)$ may be  defined by
the explicit formula:
\begin{equation}\label{eq:explag}
L_n(x)=\sum_{k=0}^{n} (-1)^{n-k}\frac{n!}{k!}\,{n\choose  k} x^{k},
\end{equation}
or by the three-term recurrence relation
\begin{align}\label{eq:recurr}
L_{n+1}(x)=(x-(2n+1))L_n(x)-n^2L_{n-1}(x).
\end{align}
The moments are
\begin{equation}\label{eq:momentsimp}
\mu_n={\mathcal L}(x^n)=\int_0^\infty x^ne^{-x}dx=n!.
\end{equation}
The linearization formula reads as follows:
$$
L_{n_1}(x)L_{n_2}(x)=\sum_{n_3}C_{n_1\,n_2}^{n_3}L_{n_3}(x),
$$
where
$$
C_{n_1\,n_2}^{n_3}=\sum_{s\geq
0}\frac{n_1!\,n_2!\,2^{N_2+n_3-2s}\,s!}
{(s-n_1)!(s-n_2)!(s-n_3)!(N_2+n_3-2s)!n_3!}.
$$
Equivalently we have
\begin{equation}\label{eq:lin}
{{\mathcal L}}(L_{n_1}(x)L_{n_2}(x)L_{n_3}(x))= \sum_{s\geq
0}\frac{n_1!\,n_2!\,n_3!\,2^{N_2+n_3-2s}\,s!}
{(s-n_1)!(s-n_2)!(s-n_3)!(N_2+n_3-2s)!}.
\end{equation}

Given positive integers $n_1,n_2,\ldots, n_k$ such that
$n=n_1+\cdots +n_k$, let $S_i$ be the consecutive integer segment
$\{n_1+\cdots n_{i-1}+1, \ldots, n_1+\cdots +n_i\}$ with $n_0=0$,
then $S_1\cup \ldots \cup S_k=[n]$. A permutation $\sigma$ of $[n]$
is said to be a \emph{generalized derangement} of specification $(n_1,\ldots,n_k)$ if $i$ and
$\sigma(i)$ do not belong to a same segment $S_j$ for all $i\in
[n]$. Let $\D(n_1,n_2,\ldots,n_k)$ be the set of generalized
derangements of specification $(n_1,\ldots,n_k)$ then we have
\begin{equation}\label{eq:ordlaguerre}
{\mathcal L}(L_{n_1}(x)\ldots L_{n_k}(x))= \sum_{\sigma\in
\D(n_1,n_2,\ldots,n_k)}1.
\end{equation}

A $q$-version of~\eqref{eq:explag} was studied by Garsia and Remmel
\cite{GR} in 1980. Several $q$-analogues of the recurrence
relation~\eqref{eq:recurr}
 and moments~\eqref{eq:momentsimp}
 were investigated in the last two decades (see \cite{SS1, SS2, CSZ}) in order to obtain new \emph{mahonian} statistics
 on the symmetric groups. On the other hand,
 in view of the unified combinatorial interpretations of several aspects of Sheffer
 orthogonal polynomials (moments, polynomials, and the linearization
 coefficients)(see \cite{Vi, Ze1,KZ}) it is natural to seek for a
 $q$-version of this picture.

 As one can expect,   the first  result in this direction was  the linearization formula for $q$-Hermite polynomials
  due to  Ismail, Stanton and Viennot \cite{ISV}, dated back to 1987.  In particular, their formula provides  a combinatorial evaluation of the
 Askey-Wilson integral.  However,
a similar  formula for
 $q$-Charlier polynomials was discovered only recently  by  Anshelevich~\cite{An},  who used  the machinery of
 $q$-Levy stochastic processes. Short later,  Kim, Stanton and Zeng \cite{KSZ} gave  a  combinatorial proof of Anshelevich's result.

The object of this paper is to give a $q$-version of all the above
formulas for  simple Laguerre polynomials. It is interesting to note
that the corresponding moment sequence appears in the recent work on
enumeration of totally positive Grassmann cells~\cite{Wi,Co}.

The rest of this paper is organized as follows: We recall the
definition of Al-Salam-Chihara polynomials,  prove their
linearization formula introduce the new
$q$-Laguerre polynomials in Section~2. In Section~3  we study the moment sequence of  the $q$-Laguerre polynomials.
In particular we shall give a new proof of Williams' formula for the corresponding moment sequence.
We derive then the linearization coefficients of our $q$-Laguerre polynomials in Section~4.
Finally two  technical lemmas will be proved in Sections~5 and~6, respectively.

%%%%****%%%%****%%%%****%%%%****%%%%****%%%%****%%%%****%%%%*
%%%%
%%%% New Section
%%%%
%%%%****%%%%****%%%%****%%%%****%%%%****%%%%****%%%%****%%%%*

\section{Al-Salam-Chihara polynomials revisited}
The Al-Salam-Chihara polynomials $Q_n(x):=Q_n(x;\alpha,\beta|q)$ may
be defined by the recurrence relation~\cite[Chapter 3]{KK}:
\begin{align}
\begin{cases}
Q_0(x)=1,\quad Q_{-1}(x)=0,\\
Q_{n+1}(x)=(2x-(\alpha+\beta)q^n)Q_n(x)-
(1-q^n)(1-\alpha\beta{q}^{n-1})Q_{n-1}(x),\quad n\geq 0.
\end{cases}
\end{align}
Let $Q_n(x)=2^np_n(x)$ then
\begin{align}\label{eq:normalizedrecurr}
xp_n(x)=p_{n+1}(x)+\frac{1}{2}
(\alpha+\beta)q^np_n(x)+\frac{1}{4}(1-q^n)(1-\alpha\beta
q^{n-1})p_{n-1}(x).
\end{align}
They also have the following explicit expressions:
\begin{align*}
Q_{n}(x;\alpha,\beta|q) &=\frac{(\alpha\beta;\,q)_{n}}{\alpha^n}\,
{}_3\phi_{2}\left( {{q^{-n},\alpha u,\alpha
u^{-1}}\atop{\alpha\beta,0}} \Big| \,q;\,q
\right)\\
&=(\alpha u;\,q)_{n}u^{-n}\, {}_2\phi_{1}\left( {{q^{-n},\beta
u^{-1}}\atop{\alpha^{-1}q^{-n+1}u^{-1}}} \Big| \,q;\alpha^{-1}q u
\right)\\
&=(\beta u^{-1};\,q)_{n}u^{n}\, {}_2\phi_{1}\left( {{q^{-n},\alpha
u}\atop{\beta^{-1}q^{-n+1}u}} \Big| \,q;\beta^{-1}q u^{-1} \right),
\end{align*}
where $x=\frac{u+u^{-1}}2$ or $x=\cos \theta$ if $u=e^{i\theta}$.

The Al-Salam-Chihara polynomials have the following generating
function
$$
G(t,x)=\sum_{n=0}^\infty Q_n(x;\alpha,\beta|q)\frac{t^n}{(q;\,q)_n}=
\frac{(\alpha t, \beta t;\,q)_\infty}{(te^{i\theta},
te^{-i\theta};\,q)_\infty}.
$$
They are orthogonal with respect to the  linear functional
$\hat{\mathcal L}_q$:
\begin{align}
\hat{\mathcal L}_q(x^n)=\frac{1}{2\pi}\int_{0}^\pi(\cos\theta)^n
\frac{(q,\alpha\beta,e^{2i\theta}, e^{-2i\theta};\,q)_\infty}
{(\alpha e^{i\theta}, \alpha e^{-i\theta}, \beta e^{i\theta}, \beta
e^{-i\theta};\,q)_\infty} d\theta,
\end{align}
where $x=\cos \theta$.
 Note that
\begin{align*}
\hat{\mathcal L}_q(Q_n(x)^2)= (q;\,q)_n(\alpha\beta;\,q)_n.
\end{align*}

\begin{thm}\label{thm:alsalamlinear}.
We have
\begin{align}\label{eq:alsalamlinear}
Q_{n_1}(x)Q_{n_2}(x)=\sum_{n_3\geq
0}C_{n_1,n_2}^{n_3}(\alpha,\beta;\,q)Q_{n_3}(x),
\end{align}
where
\begin{align*}
C_{n_1,n_2}^{n_3}(\alpha,\beta;\,q)&=(-1)^{N_2+n_3}
\frac{(q;\,q)_{n_1}(q;\,q)_{n_2}}{(\alpha\beta;\,q)_{n_3}}\\
&\times
\sum_{m_2,m_3}\frac{(\alpha\beta;\,q)_{n_1+m_3}\alpha^{m_2}\beta^{n_3+n_2-n_1-m_2-2m_3}
q^{{m_2\choose 2}+{n_3+n_2-n_1-m_2-2m_3\choose
2}}}{(q;\,q)_{n_3+n_2-n_1-m_2-2m_3}
(q;\,q)_{m_2}(q;\,q)_{m_3+n_1-n_3}(q;\,q)_{m_3+n_1-n_2}(q;\,q)_{m_3}}.
\end{align*}

\end{thm}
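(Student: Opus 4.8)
The plan is to derive the linearization formula by working directly with the explicit ${}_2\phi_1$ and ${}_3\phi_2$ representations of the Al-Salam-Chihara polynomials and then using the known connection coefficients (equivalently, the three-term recurrence) to re-expand a product in the basis $\{Q_{n_3}(x)\}$. The cleanest route is to use one of the two ${}_2\phi_1$ forms, say
\[
Q_n(x;\alpha,\beta|q)=(\alpha u;\,q)_n\,u^{-n}\,{}_2\phi_1\!\left({q^{-n},\beta u^{-1}\atop \alpha^{-1}q^{-n+1}u^{-1}}\,\Big|\,q;\,\alpha^{-1}qu\right),
\]
with $x=(u+u^{-1})/2$. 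Writing the ${}_2\phi_1$ out as a sum over a single index and expanding $(\alpha u;q)_n$ by the $q$-binomial theorem, one obtains $Q_{n_1}(x)Q_{n_2}(x)$ as a double (in fact, after pairing factors, a triple) sum in powers of $u$ and $u^{-1}$, i.e. in the basis $\{u^k+u^{-k}\}$ up to lower-order corrections. The sum indices $m_2,m_3$ in the statement are precisely the summation variables that survive after one collects terms; $m_3$ governs the overlap between the two $q$-Pochhammer expansions (this is what produces the factor $(\alpha\beta;q)_{n_1+m_3}$ and the three denominators $(q;q)_{m_3+n_1-n_3}$, $(q;q)_{m_3+n_1-n_2}$, $(q;q)_{m_3}$), while $m_2$ tracks the split of the remaining degree between the $\alpha$-type and $\beta$-type contributions (giving $\alpha^{m_2}\beta^{n_3+n_2-n_1-m_2-2m_3}$ and the two Gaussian binomial exponents $\binom{m_2}{2}$ and $\binom{n_3+n_2-n_1-m_2-2m_3}{2}$).

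Concretely, the steps are: (1) substitute the ${}_2\phi_1$ expressions for $Q_{n_1}$ and $Q_{n_2}$ and multiply, organizing the product as a sum over monomials $u^a u^{-b}$; (2) use the $q$-binomial theorem / Heine-type transformations to rewrite the product of the two hypergeometric series as a well-poised-looking double sum, introducing $m_3$ as the "contraction" index; (3) expand $Q_{n_3}(x)$ via its ${}_3\phi_2$ (or the complementary ${}_2\phi_1$) representation so that both sides are expressed in the same monomial basis in $u$; (4) equate coefficients — since the $Q_n$ are linearly independent and form a basis of the polynomial ring, it suffices to match, say, the coefficient of the top-degree monomial $u^{n_3}$ in each $Q_{n_3}$ and solve the resulting triangular system for $C^{n_3}_{n_1,n_2}$; (5) simplify the coefficient using standard $q$-Pochhammer identities (e.g. $(a;q)_{m+n}=(a;q)_m(aq^m;q)_n$, $(q;q)_{-k}^{-1}=0$ for $k>0$ to handle the ranges of $m_2,m_3$, and the reflection $(a;q)_n=(-a)^nq^{\binom n2}(a^{-1}q^{1-n};q)_n$ to produce the sign $(-1)^{N_2+n_3}$ and the powers of $q$ of binomial-coefficient type).

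An alternative, possibly shorter, derivation avoids the explicit $u$-expansion: one can use the fact that linearization coefficients for a family satisfying $xp_n = p_{n+1}+b_np_n+\lambda_np_{n-1}$ can be extracted from the generating function $G(t,x)$ displayed in the excerpt. Writing
\[
G(s,x)G(t,x)=\frac{(\alpha s,\beta s,\alpha t,\beta t;\,q)_\infty}{(se^{i\theta},se^{-i\theta},te^{i\theta},te^{-i\theta};\,q)_\infty},
\]
and then re-expanding the right side in terms of a single $G(w,x)$ (using the Al-Salam--Chihara evaluation / a ${}_2\phi_1$ summation in the $w$-variable), one reads off $\sum_{n_3}C^{n_3}_{n_1,n_2}(\alpha,\beta;q)\,w^{n_3}/(q;q)_{n_3}$ as a coefficient of $s^{n_1}t^{n_2}$; the double sum $m_2,m_3$ then emerges from expanding the four finite $q$-Pochhammer symbols in the numerator. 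I expect the \textbf{main obstacle} to be step (2)/the bookkeeping: keeping careful track of the three interlocking ranges (the constraints $m_3\ge n_3-n_1$, $m_3\ge n_2-n_1$, $m_3\ge 0$, and $0\le m_2\le n_3+n_2-n_1-2m_3$ implied by the vanishing of $1/(q;q)_{\text{neg}}$) while performing the hypergeometric manipulations, and verifying that the resulting coefficient is genuinely symmetric in $n_1\leftrightarrow n_2$ despite the asymmetric-looking formula — a consistency check that should be carried out explicitly, e.g. by the substitution $m_3\mapsto m_3+n_1-n_2$ combined with a Gaussian-binomial reindexing. Once the algebra is organized, each individual manipulation is routine $q$-series calculus; the difficulty is entirely in the combinatorial control of the summation domains and in matching the normalization factor $(q;q)_{n_1}(q;q)_{n_2}/(\alpha\beta;q)_{n_3}$ coming from $\hat{\mathcal L}_q(Q_{n}^2)=(q;q)_n(\alpha\beta;q)_n$.
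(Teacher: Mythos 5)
Your write-up is a strategy sketch rather than a proof, and the step that actually produces the closed form is missing from both of your routes. In your primary route (multiplying the two ${}_2\phi_1$ representations and expanding in powers of $u$), the product is a symmetric Laurent polynomial in $u$, and re-expressing it in the basis $\{Q_{n_3}\}$ by ``matching the coefficient of $u^{n_3}$ and solving the resulting triangular system'' is exactly the hard part: a closed-form inversion of that triangular system is not routine $q$-series calculus, and you never identify the summation theorem that would collapse the resulting multiple sums into the stated double sum over $m_2,m_3$. Saying that $m_2,m_3$ ``are precisely the summation variables that survive'' and that $(\alpha\beta;q)_{n_1+m_3}$ comes from ``the overlap between the two $q$-Pochhammer expansions'' describes the answer, it does not derive it; as written, the plan stalls at the inversion step.

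The paper sidesteps this by using orthogonality from the outset, $C_{n_1,n_2}^{n_3}=\hat{\mathcal L}_q(Q_{n_1}Q_{n_2}Q_{n_3})/\hat{\mathcal L}_q(Q_{n_3}^2)$, and the decisive input is the Askey--Wilson integral: via the Ismail--Stanton--Viennot evaluation it gives $\hat{\mathcal L}_q\bigl(G(t_1,x)G(t_2,x)G(t_3,x)\bigr)$ explicitly as a ${}_3\phi_2$; Euler's two $q$-exponential expansions, a resummation over the ${}_3\phi_2$ index, and extraction of the coefficient of $t_1^{n_1}t_2^{n_2}t_3^{n_3}$ (then division by $(q,\alpha\beta;q)_{n_3}$) produce the double sum. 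Your alternative route is close to this in spirit but is flawed as stated: $G(s,x)G(t,x)$ cannot be ``re-expanded as a single $G(w,x)$''; it must be expanded as $\sum_{n_3}c_{n_3}(s,t)\,Q_{n_3}(x)$, and extracting $c_{n_3}$ requires precisely the triple-product evaluation under $\hat{\mathcal L}_q$ (equivalently, pairing with a third generating function and invoking the Askey--Wilson integral), which your sketch replaces by an unspecified ``${}_2\phi_1$ summation in the $w$-variable.'' The missing idea, concretely, is that Askey--Wilson integral evaluation of the triple product of generating functions; once it is supplied, your second route becomes the paper's proof, while your first route remains an uncontrolled multi-sum manipulation with no identified mechanism for closure.
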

\begin{proof}
Clearly $C_{n_1,n_2}^{n_3}(\alpha,\beta;\,q) =\hat{\mathcal
L}_q(Q_{n_1}(x)Q_{n_2}(x)Q_{n_3}(x))/\hat{\mathcal
L}_q(Q_{n_3}(x)Q_{n_3}(x))$. Using the Askey-Wilson integral:
$$
\frac{(q;\,q)_\infty}{2\pi}\int_0^\pi\frac{(e^{2i\theta},
e^{-2i\theta};\,q)_\infty} {\prod_{j=1}^4(t_je^{i\theta},
t_je^{-i\theta};\,q)_\infty}d\theta=
\frac{(t_1t_2t_3t_4;\,q)_\infty}{\prod_{1\leq j<k\leq
4}(t_jt_k;\,q)_\infty},
$$
one can prove \cite[Theorem 3.5]{ISV} that
\begin{align*}
&\hat{\mathcal L}_q( G(t_1,x)G(t_2,x)G(t_3,x))\\
&\hspace{2cm}= \frac{(\alpha t_1t_2t_3, \beta q t_1t_2t_3,
{\alpha\beta}q;\,q)_\infty}{(t_1t_2,t_1t_3,t_2t_3;\,q)_\infty}
{}_3\phi_2\left(\begin{array}{ccc}
t_1t_2,&t_1t_3,&t_2t_3\\
&\alpha t_1t_2t_3,&\beta t_1t_2t_3
\end{array}|q;\alpha\beta\right).
\end{align*}
Therefore
\begin{align}
&\sum_{n_1,n_2,n_3}\hat{\mathcal
L}_q(Q_{n_1}(x)Q_{n_2}(x)Q_{n_3}(x))\frac{t_1^{n_1}}{(q;\,q)_{n_1}}
\frac{t_2^{n_2}}{(q;\,q)_{n_2}}\frac{t_3^{n_3}}{(q;\,q)_{n_3}}\nonumber\\
&\hspace{2cm}=\sum_{k\geq 0}\frac{(\alpha t_1t_2t_3q^k,\, \beta
t_1t_2t_3q^k, \,\alpha\beta; q)_\infty} {(t_1t_2q^k,\, t_1t_3q^k,\,
t_2t_3q^k; q)_\infty}\frac{(\alpha\beta)^k}{(q;\,q)_k}.\label{eq:1}
\end{align}
Using the Euler formulas:
\begin{align*}
(t;\,q)_\infty=\sum_{n\geq 0}\frac{(-1)^nq^{n\choose
2}}{(q;\,q)_n}t^n;\qquad \frac{1}{(t;\,q)_\infty}=\sum_{n\geq
0}\frac{1}{(q;\,q)_n}t^n,
\end{align*}
we can rewrite the sum in \eqref{eq:1} as follows:
\begin{align}
&(\alpha\beta;\,q)_\infty\sum_{k\geq
0}\frac{(\alpha\beta)^k}{(q;\,q)_k}\sum_{l_1,l_2\geq
0}\frac{\alpha^{l_1}\beta^{l_2}
q^{k(l_1+l_2)}(-t_1t_2t_3)^{l_1+l_2}q^{{l_1\choose 2}+{l_2\choose 2}}}{(q;\,q)_{l_1}(q;\,q)_{l_2}}\nonumber\\
&\hspace{2cm}\times \sum_{m_1, m_2,m_3\geq 0}
\frac{q^{(m_1+m_2+m_3)k}t_1^{m_1+m_2}t_2^{m_1+m_3}t_3^{m_1+m_3}}
{(q;\,q)_{m_1}(q;\,q)_{m_2}(q;\,q)_{m_3}}.\label{eq:step1}
\end{align}
Substituting
$$
\sum_{k\geq
0}\frac{({\alpha\beta}q^{l_1+l_2+m_1+m_2+m_3})^k}{(q;\,q)_k}
=\frac{1}{({\alpha\beta}q^{l_1+l_2+m_1+m_2+m_3}; q)_\infty}
$$
 in \eqref{eq:step1},
we get
\begin{align}\label{eq:2}
\sum_{l_1,l_2,m_1,m_2,m_3}t_1^{n_1}t_2^{n_2}t_3^{n_3}
\frac{(\alpha\beta)_{n_1+m_3}\alpha^{l_1}\beta^{l_2}q^{{l_1\choose
2}+{l_2\choose 2}}}
{(q;\,q)_{m_1}(q;\,q)_{m_2}(q;\,q)_{m_3}(q;\,q)_{l_1}(q;\,q)_{l_2}}(-1)^{l_1+l_2},
\end{align}
where $l_1+l_2+m_1+m_2=n_1$, $l_1+l_2+m_1+m_3=n_2$ and
 $l_1+l_2+m_2+m_3=n_3$.

 Since  $l_1+l_2\equiv N_2+n_3\pmod{2}$,
extracting the coefficient of
$\frac{t_1^{n_1}t_2^{n_2}t_3^{n_3}}{(q;\,q)_{n_1}(q;\,q)_{n_2}(q;\,q)_{n_3}}$
in \eqref{eq:2} and dividing by $(q,\alpha\beta;\,q)_{n_3}$
 we obtain \eqref{eq:alsalamlinear} where $l_1$ is replaced by $m_2$.
\end{proof}

%%%%%%%%%%%%%%%%%%%%%%%%%%%%%%%%%%%%%%%%%%%%%%%%
%\section{The new $q$-Laguerre polynomials}
%%%%%%%%%%%%%%%%%%%%%%%%%%%%%%%%%%%%%%%%%%%%%%%%%%%%
We define the new $q$-Laguerre polynomials $L_n(x;\,q)$ by
re-scaling Al-Salam-Chihara polynomials:
\begin{equation}\label{eq:polydef}
L_n(x;\,q)=\left(\frac{\sqrt{y}}{q-1}\right)^n
Q_n\left(\frac{(q-1)x+y+1}{2\sqrt{y}}; \frac{1}{\sqrt{y}},
\sqrt{y}q|q\right).
\end{equation}
It follows from \eqref{eq:normalizedrecurr} that the polynomials
$L_n(x;\,q)$ satisfy the recurrence:
\begin{align}\label{eq:recurrqlaguerre}
L_{n+1}(x;\,q)=(x-y[n+1]_q-[n]_q)L_n(x;\,q)-y[n]_q^2L_{n-1}(x;\,q).
\end{align}

We derive then the explicit formula for $L_{n}(x)$:
\begin{equation}\label{eq:explaguerre}
L_n(x;\,q)=\sum_{k=0}^n (-1)^{n-k}\frac{n!_q}{k!_q}\,{n\brack k}_q
q^{k(k-n)}y^{n-k} \prod_{j=0}^{k-1}\left(x-(1-y q^{-j})[j]_q\right).
\end{equation}
Thus
\begin{align*}
L_1(x;\,q)&=x-y,\\
L_2(x;\,q)&=x^2-(1+2y+qy)x+(1+q)y^2,\\
L_3(x;\,q)&={x}^{3}-({q}^{2}y+3\,y+q+2+2\,qy)x^2\\
&\hspace{1cm}+({q}^{3}{y}^{2}+y{q}^{2}+q+2\,qy+3\,{q}^{2}{y}^{2}+
1+4\,q{y}^{2}+2
\,y+3\,{y}^{2})x\\
&\hspace{2cm}-(2\,{q}^{2}+2\,q+{q}^{3}+1){y}^{3}.
\end{align*}

A combinatorial interpretation of these $q$-Laguerres polynomials
can be derived from the Simion and Stanton's combinatorial model for
the $a=s=u=1$ and  $r=t=q$ special case of the quadrabasic Laguerre
polynomials \cite[p.313]{SS2}.

%%%%****%%%%****%%%%****%%%%****%%%%****%%%%****%%%%****%%%%*
%%%%
%%%% New Section
%%%%
%%%%****%%%%****%%%%****%%%%****%%%%****%%%%****%%%%****%%%%*
\section{Moments of the $q$-Laguerre polynomials}
Let $\S_n$ be the set of permutations of $[n]:=\{1,2, \ldots, n\}$.
For $\sigma\in \S_n$  the \emph{number of crossings} of $\sigma$ is
defined by
$$
\cro(\sigma)=\sum_{i=1}^n\#\{j|j<i\leq \sigma(j)<\sigma(i)\}+
\sum_{i=1}^n\#\{j|j>i>\sigma(j)>\sigma(i)\},
$$
while the number of \emph{weak excedances} of $\sigma$ is defined by
\begin{align*}
\wex(\sigma)&=\#\{i|1\leq i\leq n\;\textrm{and}\; i\leq \sigma(i)\}.
\end{align*}

It is useful to  have a geometric interpretation of  these statistics by associating with each permutation
$\sigma$ of $[n]$ a  diagram as follows:
arrange the integers $1, 2, \ldots , n$
on a line in increasing order from left to right and draw an arc $i\to \sigma(i)$
above (resp. under) the line if $i<
\sigma(i)$ (resp. $i> \sigma(i)$). For example, the permutation
$\sigma=9\,3\,7\,4\,6\,11\,5\,8\,1\,10\,2$ can be depicted as
follows:
$$
{\setlength{\unitlength}{1mm}
\begin{picture}(55,20)(0,-10)
\qbezier(0,0)(20,12)(40,0)\qbezier(5,0)(7,5)(10,0)\qbezier(20,0)(22,5)(25,0)\qbezier(10,0)(20,8)(30,0)\qbezier(25,0)(37,10)(50,0)
\qbezier(5,0)(22,-10)(50,0)\qbezier(20,0)(25,-6)(30,0)\qbezier(0,0)(15,-12)(40,0)
\put(0,0){\line(1,0){50}}
\put(0,0){\circle*{1,3}}\put(0,0){\makebox(-2,-4)[c]{\footnotesize 1}}
\put(5,0){\circle*{1,3}}\put(5,0){\makebox(-2,-4)[c]{\footnotesize2}}
\put(10,0){\circle*{1,3}}\put(10,0){\makebox(-2,-4)[c]{\footnotesize3}}
\put(15,0){\circle*{1,3}}\put(15,0){\makebox(-2,-4)[c]{\footnotesize4}}
\put(20,0){\circle*{1,3}}\put(20,0){\makebox(-2,-4)[c]{\footnotesize5}}
\put(25,0){\circle*{1,3}}\put(25,0){\makebox(-2,-4)[c]{\footnotesize6}}
\put(30,0){\circle*{1,3}}\put(30,0){\makebox(-2,-4)[c]{\footnotesize7}}
\put(35,0){\circle*{1,3}}\put(35,0){\makebox(-2,-4)[c]{\footnotesize8}}
\put(40,0){\circle*{1,3}}\put(40,0){\makebox(-2,-4)[c]{\footnotesize9}}
\put(45,0){\circle*{1,3}}\put(45,0){\makebox(-2,-4)[c]{\footnotesize10}}
\put(50,0){\circle*{1,3}}\put(50,0){\makebox(-2,-4)[c]{\footnotesize11}}
\end{picture}}
$$

Thus, the number of weak excedances of $\sigma$ is the number of edges drawn above the line  plus the number of isolated points,
while the  number  of crossings of $\sigma$ is  the number
of pairs of edges above the line that cross or touch $\left({\setlength{\unitlength}{0.6mm}
\begin{picture}(33,5)(0,0)
\put(2,0){\line(1,0){31}}
\put(5,0){\circle*{1,3}}
\put(25,0){\circle*{1,3}}
\put(10,0){\circle*{1,3}}
\put(30,0){\circle*{1,3}}
\qbezier(5,0)(15,10)(25,0) \qbezier(10,0)(20,10)(30,0)
\end{picture}} \text{ or}   {\setlength{\unitlength}{0.6mm}
\begin{picture}(33,5)(0,0)
\put(2,0){\line(1,0){29}}
\put(5,0){\circle*{1,3}}
\put(17,0){\circle*{1,3}}
\put(29,0){\circle*{1,3}}
\qbezier(5,0)(11,10)(17,0) \qbezier(17,0)(23,10)(29,0)
\end{picture}} \right)$,
plus the  number
of pairs of edges under the line that cross
$\left(
{\setlength{\unitlength}{0.6mm}
\begin{picture}(35,5)(0,-4)
\put(2,0){\line(1,0){31}}
\put(5,0){\circle*{1,3}}
\put(25,0){\circle*{1,3}}
\put(10,0){\circle*{1,3}}
\put(30,0){\circle*{1,3}}
\qbezier(5,0)(15,-10)(25,0)
 \qbezier(10,0)(20,-10)(30,0)
\end{picture}}\right)$.

Let  $\mu_n^{(\ell)}(y,q)$ be the enumerating polynomial of
permutations in $\S_n$  with respect to numbers of weak excedances and crossings:
\begin{align}\label{defmoment}
\mu_n^{(\ell)}(y,q):=\sum_{\sigma\in
S_n}y^{\exc(\sigma)}q^{\cro(\sigma)}.
\end{align}
It has been proved  in  \cite{SS2,RA,Co} that the generating
function of the moment sequence has
 the  following continued fraction
expansion:
\begin{equation}\label{eq:co}
E(y, q, t):=\sum_{n\geq
0}\mu_n^{(\ell)}(y,q)t^n=\frac{1}{1-b_0t-\displaystyle
\frac{\lambda_1 t^2}{1-b_1t-\displaystyle\frac{\lambda_2
t^2}{\ddots}}},
\end{equation}
where $b_n=y[n+1]_q+[n]_q$ and $\lambda_n=y[n]_q^2$.

We derive then from the classical theory of orthogonal polynomials
the following interpretation for  the moments of the $q$-Laguerre
polynomials.
\begin{thm} The $n$-th moment of the $q$-Laguerre polynomials  is equal to $\mu_n^{(\ell)}(y,q)$. More precisely,
let ${{\mathcal L}}_q$ be the linear functional defined by
${{\mathcal L}}_q(x^n)=\mu_n^{(\ell)}(y,q)$, then
\begin{align}\label{eq:orthog}
{\mathcal
L}_q(L_{n_1}(x;\,q)L_{n_2}(x;\,q))=y^{n_1}(n_1!_q)^2\delta_{n_1\,n_2}.
\end{align}
\end{thm}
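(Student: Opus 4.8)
The plan is to deduce this from the classical dictionary between a monic orthogonal polynomial sequence, its three‑term recurrence, and the Jacobi (J‑)fraction of the associated moment generating function (Favard's theorem together with Stieltjes' continued‑fraction expansion of the moments; see the references on Sheffer polynomials cited above). Recall the precise statement: if $\{p_n\}_{n\ge 0}$ is the monic polynomial sequence determined by $p_{-1}=0$, $p_0=1$ and
\[
p_{n+1}(x)=(x-b_n)p_n(x)-\lambda_n p_{n-1}(x),
\]
then there is a unique linear functional $\mathcal L$ on $\mathbb{C}[x]$ with $\mathcal L(1)=1$ relative to which the $p_n$ are orthogonal; its moments $\mu_n=\mathcal L(x^n)$ have generating function equal to the J‑fraction built from the \emph{same} $b_n$ and $\lambda_n$, and $\mathcal L(p_n^2)=\lambda_1\lambda_2\cdots\lambda_n$. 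Conversely, the J‑fraction expansion of a formal power series is unique when it exists, so a functional whose moment generating function is such a J‑fraction has exactly the corresponding $\{p_n\}$ as its monic orthogonal polynomial sequence.

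Next I would read off the needed data. From the recurrence \eqref{eq:recurrqlaguerre}, the $q$‑Laguerre polynomials $L_n(x;q)$ are precisely the monic polynomials attached to $b_n=y[n+1]_q+[n]_q$ and $\lambda_n=y[n]_q^2$. Comparing with \eqref{eq:co}, these are exactly the coefficients occurring in the continued fraction for $E(y,q,t)=\sum_{n\ge 0}\mu_n^{(\ell)}(y,q)t^n$; moreover $\mu_0^{(\ell)}(y,q)=1$, so the normalization matches. Hence the linear functional $\mathcal L_q$ defined by $\mathcal L_q(x^n)=\mu_n^{(\ell)}(y,q)$ has moment generating function \eqref{eq:co}, and by the correspondence its monic orthogonal polynomial sequence is $\{L_n(x;q)\}$. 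This gives $\mathcal L_q(L_{n_1}(x;q)L_{n_2}(x;q))=0$ for $n_1\neq n_2$, and for $n_1=n_2$ the norm is
\[
\mathcal L_q\bigl(L_{n_1}(x;q)^2\bigr)=\prod_{k=1}^{n_1}\lambda_k=\prod_{k=1}^{n_1}y[k]_q^2=y^{n_1}(n_1!_q)^2,
\]
which is \eqref{eq:orthog}.

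There is no genuine obstacle here; the whole argument is an instance of the classical theory, and the authors' phrase ``we derive then from the classical theory of orthogonal polynomials'' signals exactly this. The only point deserving a word of care is bookkeeping: one must check that \eqref{eq:co} is the J‑fraction (not an S‑fraction or a contracted fraction) attached to \eqref{eq:recurrqlaguerre}, i.e.\ that the $b_n$ in \eqref{eq:co} coincide with the recurrence's middle coefficients, that the $\lambda_n$ coincide with its off‑diagonal coefficients with the correct index alignment ($\lambda_1$ being the first one to appear), and that $\mu_0=1$; all of these are immediate from the displayed formulas. A reader preferring a self‑contained derivation can instead prove $\mathcal L_q\bigl(x^k L_n(x;q)\bigr)=\delta_{k,n}\prod_{j=1}^{n}\lambda_j$ for $0\le k\le n$ by induction on $n$ using \eqref{eq:recurrqlaguerre} and the continued fraction \eqref{eq:co}, which unwinds to the same computation.
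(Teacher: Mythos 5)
Your proposal is correct and follows essentially the same route as the paper: the authors simply invoke the classical correspondence between the three-term recurrence \eqref{eq:recurrqlaguerre}, the J-fraction \eqref{eq:co} from \cite{SS2,RA,Co}, and the moments, exactly as you do, with the norm $\prod_{k=1}^{n_1}\lambda_k=y^{n_1}(n_1!_q)^2$. Your added bookkeeping remarks (matching $b_n$, $\lambda_n$, and $\mu_0=1$) are the only details the paper leaves implicit.
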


The first values of the moment sequence  are as follows:
\begin{align*}
\mu_1^{(\ell)}(y,q)&=y,\\
\mu_2^{(\ell)}(y,q)&=y+y^2,\\
\mu_3^{(\ell)}(y,q)&=y+(3+q)y^2+y^3,\\
\mu_4^{(\ell)}(y,q)&=y+(6+4q+q^2)y^2+(6+4q+q^2)y^3+y^4.
\end{align*}

Combining  the results of Corteel~\cite{Co},
Williams~\cite[Proposition 4.11]{Wi} and the classical theory of
orthogonal polynomials, one can write the moments of the above
$q$-Laguerre polynomials as a finite double sum (cf.
\eqref{eq:momentw}).  Here we propose  a direct  proof of this
result. Actually  we shall give such a formula  for the moments of
Al-Salam-Chihara polynomials.

\begin{defi}
Define the $y$-versions of the $q$-Stirling numbers of the second
kind by
\begin{align}\label{eq:y-stirling}
X^n=\sum_{k=1}^nS_q(n,k,y)\prod_{j=0}^{k-1}(X-[j]_q(1-yq^{-j})).
\end{align}
The $y$-versions of  $q$-Stirling numbers of the first kind can be
defined by the inverse matrix or equivalently
$$
\prod_{j=0}^{n-1}(X-[j]_q(1-yq^{-j}))=\sum_{k=1}^{n}s_q(n,k,y)X^k.
$$
\end{defi}
\begin{remark}  We have
\begin{align*}
S_q(n,k,y)|_{q=1}=S(n,k) (1-y)^{n-k},\quad  S_q(n,k,0)=S_q(n,k),
\end{align*}
where $S(n,k)$ and $S_q(n,k)$ are,  respectively, the Stirling
numbers of the second kind and their
 well-known $q$-analogues,  see \cite{GO}.
\end{remark}

Consider the rescaled Al-Salam-Chihara polynomials $P_n(x)$:
\begin{align}\label{rescal}
P_n(X)&=Q_n(((q-1)X+1/\alpha^2+1)\alpha/2;\,\alpha,\beta|q)\nonumber\\
&=\alpha^{-n}\sum_{k=0}^n\frac{(q^{-n};\,q)_k}{(q;\,q)_k}q^k(\alpha\beta
q^k;\,q)_{n-k}(1-q)^k q^{k\choose 2}\alpha^{2k}\nonumber\\
&\qquad \qquad\times
\prod_{j=0}^{k-1}\left(X-[i]_q(1-q^{-i}/\alpha^2)\right).
\end{align}
\begin{lem}\label{th:ACmomformula}
The moments of the rescaled Al-Salam-Chihara polynomials $P_n(X)$
are
\begin{align}\label{eq:explicit1}
\mu_n(\alpha,\beta)=\sum_{k=1}^n S_q(n,k,1/\alpha^2)
(\alpha\beta;\,q)_k q^{-{k\choose 2}} (1-q)^{-k} \alpha^{-2k}.
\end{align}
\end{lem}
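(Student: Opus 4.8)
The plan is to expand $X^{n}$ in the ``falling factorial'' basis
\[
\phi_k(X):=\prod_{j=0}^{k-1}\bigl(X-[j]_q(1-q^{-j}/\alpha^2)\bigr),
\]
for then, by the very definition of the $y$-version of the $q$-Stirling numbers with $y=1/\alpha^2$, we have $X^{n}=\sum_{k=1}^{n}S_q(n,k,1/\alpha^2)\,\phi_k(X)$. Since the rescaling \eqref{rescal} is the affine substitution $x=((q-1)X+1/\alpha^2+1)\alpha/2$, the orthogonality functional $\mathcal L$ of $P_n$ (with $\mathcal L(X^n)=\mu_n(\alpha,\beta)$) is just $\hat{\mathcal L}_q$ transported through this substitution, so applying it reduces everything to the evaluation of $\hat{\mathcal L}_q(\phi_k)$; once we show
\[
\hat{\mathcal L}_q(\phi_k)=(\alpha\beta;\,q)_k\,q^{-{k\choose 2}}(1-q)^{-k}\alpha^{-2k},
\]
summing against $S_q(n,k,1/\alpha^2)$ gives \eqref{eq:explicit1}.

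The key step is to rewrite $\phi_k$ after trading $X$ for $x=\cos\theta=(u+u^{-1})/2$. A direct computation from $x=((q-1)X+1/\alpha^2+1)\alpha/2$ gives, term by term,
\[
X-[j]_q(1-q^{-j}/\alpha^2)=\frac{2x/\alpha-q^{j}-q^{-j}/\alpha^2}{q-1}
=\frac{-q^{-j}(\alpha q^{j}-u)(\alpha q^{j}-u^{-1})}{\alpha^{2}(q-1)},
\]
the last equality being the factorization in $\alpha$ of the quadratic $\alpha^{2}q^{j}-\alpha(u+u^{-1})+q^{-j}$, whose roots are $uq^{-j}$ and $u^{-1}q^{-j}$. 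Multiplying over $0\le j\le k-1$ and extracting $(-u)^{k}$ and $(-u^{-1})^{k}$ from the two families of linear factors collapses the product to
\[
\phi_k(X)=\frac{q^{-{k\choose 2}}}{\alpha^{2k}(1-q)^{k}}\,(\alpha u;\,q)_k\,(\alpha u^{-1};\,q)_k .
\]

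It then remains to evaluate $\hat{\mathcal L}_q\bigl((\alpha u;\,q)_k(\alpha u^{-1};\,q)_k\bigr)$. Because $(\alpha e^{\pm i\theta};\,q)_\infty/(\alpha e^{\pm i\theta};\,q)_k=(\alpha q^{k}e^{\pm i\theta};\,q)_\infty$, the factor $(\alpha u;\,q)_k(\alpha u^{-1};\,q)_k$ simply shifts $\alpha\mapsto\alpha q^{k}$ in two of the four Pochhammer symbols of the Al-Salam-Chihara weight, so the resulting integral is again an Askey-Wilson integral, now with $t_1=\alpha q^{k}$, $t_2=\beta$, $t_3=t_4=0$; its value is $1/(\alpha\beta q^{k};\,q)_\infty$, so that $\hat{\mathcal L}_q\bigl((\alpha u;\,q)_k(\alpha u^{-1};\,q)_k\bigr)=(\alpha\beta;\,q)_\infty/(\alpha\beta q^{k};\,q)_\infty=(\alpha\beta;\,q)_k$. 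Combining the last three displays yields the lemma; note that the $k=0$ instance recovers $\hat{\mathcal L}_q(1)=1$, confirming the normalization.

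The only genuinely delicate point is the closed form for $\phi_k(X)$: one must keep exact track of the affine change of variable and of the signs and powers of $q$ produced by the terms $[j]_q(1-q^{-j}/\alpha^2)$. Everything after that is the same Euler- and Askey-Wilson-style bookkeeping already used in the proof of Theorem~\ref{thm:alsalamlinear}, specialized here to the degenerate case in which two of the Askey-Wilson parameters vanish.
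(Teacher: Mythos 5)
Your proof is correct, but it follows a different route from the paper. You derive the formula directly: you expand $X^n=\sum_k S_q(n,k,1/\alpha^2)\phi_k(X)$ in the basis $\phi_k(X)=\prod_{j=0}^{k-1}(X-[j]_q(1-q^{-j}/\alpha^2))$, observe that under the affine substitution $\phi_k$ becomes $q^{-\binom{k}{2}}\alpha^{-2k}(1-q)^{-k}(\alpha u;q)_k(\alpha u^{-1};q)_k$ (your factorization of the quadratic in $\alpha$ checks out), and then evaluate $\hat{\mathcal L}_q\bigl((\alpha u;q)_k(\alpha u^{-1};q)_k\bigr)=(\alpha\beta;q)_k$ by the Askey--Wilson integral with $t_1=\alpha q^k$, $t_2=\beta$, $t_3=t_4=0$. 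The paper instead argues by verification: it takes the claimed $\mu_n(\alpha,\beta)$ as the definition of a linear functional $L$, uses the $y$-Stirling orthogonality (first and second kind inverting each other) to show $L(\phi_k)=(\alpha\beta;q)_kq^{-\binom{k}{2}}(1-q)^{-k}\alpha^{-2k}$ formally, and then checks $L(P_n(X))=0$ for $n>0$ by reducing, via the explicit coefficients in \eqref{rescal}, to the terminating $q$-binomial sum $\sum_k\frac{(q^{-n};q)_k}{(q;q)_k}q^k=0$. What your approach buys is a constructive derivation that explains where the factor $(\alpha\beta;q)_k$ comes from (a degenerate Askey--Wilson integral), at the cost of relying on the integral representation of $\hat{\mathcal L}_q$ (so, strictly, on parameter ranges where it converges, from which the general identity follows by rationality); the paper's verification is purely algebraic and needs no analytic input beyond the $q$-binomial theorem, but it does not by itself exhibit how one would discover the formula.
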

\begin{proof}
Let $L: X^n\mapsto \mu_n(\alpha,\beta)$ be the linear functional.
 We check that these moments do satisfy $L(P_n(X))=0$ for $n>0$. Let $a_k$ be the coefficients in front of the product in
\eqref{rescal}, then we have, using $y$-Stirling orthogonality,
\begin{align*}
 L(P_n(X))&=\sum_{k=0}^n a_k\sum_{j=1}^k s_q(k,j,1/\alpha^2)\sum_{t=1}^jS_q(j,t,1/\alpha^2)
(\alpha\beta;\,q)_t q^{-{t\choose 2}} (1-q)^{-t}\alpha^{-2t}\\
&=\sum_{k=0}^n a_k(\alpha\beta;\,q)_k q^{-{k\choose 2}} (1-q)^{-k}\alpha^{-2k}\\
&=\alpha^{-n}(\alpha\beta;\,q)_n\sum_{k=0}^n\frac{(q^{-n};\,
q)_k}{(q;\,q)_k}q^k=0.
\end{align*}
Note that the last equality follows by applying the $q$-binomial
formula.
\end{proof}

\begin{lem}\label{lem2} Let $p=1/q$. We have
\begin{align}
\sum_{k=0}^\infty \frac{(\alpha\beta;\,q)_k q^{-{k\choose
2}}(1-q)^{-k}\alpha^{-2k}t^k}{\prod_{i=1}^k(1-[i]_q
t(1-q^{-i}/\alpha^2))}= \sum_{i\geq
0}\frac{c_i(\alpha,\beta)}{1-[i]_q
t(1-q^{-i}/\alpha^2)},\label{eq:partialfraction}
\end{align}
where
$$
c_i(\alpha,\beta)=\frac{(\alpha\beta ;q)_i}{(q;q)_i}
\frac{q^{i-i^2}\alpha^{-2i}}{(q^{1-2i}/\alpha^2;q)_i}
\frac{(p^{1+i}\alpha\beta/\alpha^2;p)_\infty}{(p^{1+2i}/\alpha^2;p)_\infty}.
$$
\end{lem}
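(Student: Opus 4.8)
The plan is to establish \eqref{eq:partialfraction} as an identity of rational functions in $t$ by computing, for each $i\ge 0$, the residue of the left-hand side at the simple pole $t=t_i:=1/([i]_q(1-q^{-i}/\alpha^2))$ and matching it with $c_i(\alpha,\beta)$. First I would rewrite the $i$-th summand of the left-hand side so that its pole structure is transparent: the term indexed by $k$ has poles exactly at $t_1,\dots,t_k$, so the pole at $t_i$ comes from all summands with $k\ge i$. Thus the residue at $t=t_i$ is a sum over $k\ge i$ of the $k$-th numerator evaluated at $t_i$ divided by $\prod_{1\le j\le k,\,j\ne i}(1-[j]_q t_i(1-q^{-j}/\alpha^2))$, times the derivative factor from the simple pole. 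The key computational reduction is that $1-[j]_q t_i(1-q^{-j}/\alpha^2)$, after clearing the common factor $[i]_q(1-q^{-i}/\alpha^2)$, becomes a clean ratio of $q$-shifted quantities; I expect the product over $j\ne i$ to telescope into Pochhammer symbols of base $q$ (for the $j<i$ part) and base $p=1/q$ (for the $j>i$ part, which is genuinely an infinite product and accounts for the $(p^{1+i}\alpha\beta/\alpha^2;p)_\infty/(p^{1+2i}/\alpha^2;p)_\infty$ factor in $c_i$).

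After that normalization, the residue at $t_i$ is $c_i^{(0)}(\alpha,\beta)$ times a residual $q$-hypergeometric sum over $k\ge i$ in the numerator data $(\alpha\beta;q)_k q^{-\binom k2}(1-q)^{-k}\alpha^{-2k}t_i^k$ divided by the $j<i$ and $j>i$ products evaluated at $t_i$. I would reindex $k=i+m$ and recognize the resulting series in $m$ as a ${}_2\phi_1$ or ${}_1\phi_1$ that is summable by the $q$-Gauss sum or the $q$-binomial theorem, exactly as in the proof of Lemma~\ref{th:ACmomformula} where the final $q$-binomial evaluation collapsed the sum. Summing this closed form against the prefactor should reproduce $c_i(\alpha,\beta)$ exactly. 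To finish, I would check that both sides vanish at $t=\infty$ (the left-hand side visibly does, as each summand is $O(t^{-k})$ near $\infty$ for $k\ge 1$, and the $k=0$ term is the constant $1$, which is matched by $\sum_i c_i(\alpha,\beta)$ at $t=0$—so one also verifies agreement at $t=0$ as the bookkeeping check on the constant term); since a rational function is determined by its poles, principal parts, and value at one extra point, matching residues at every $t_i$ plus one normalization suffices.

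The main obstacle will be the bookkeeping in the telescoping product $\prod_{j\ne i}(1-[j]_q t_i(1-q^{-j}/\alpha^2))$ and, relatedly, making rigorous sense of the infinite $j>i$ part: one must argue convergence (this is where $p=1/q$ with $|q|>1$, or a suitable formal/analytic interpretation, enters) and correctly identify which factors go into the base-$q$ Pochhammer, which into the base-$p$ one. A secondary subtlety is that the residual series over $m=k-i\ge0$ must be checked to be exactly of the form to which a known summation ($q$-Gauss, or the terminating/nonterminating $q$-binomial theorem) applies; getting the arguments and the base right there is where sign errors and off-by-one shifts in the exponents ${m_2\choose 2}$-type quadratic terms are most likely to creep in. Once the product is correctly factored and the series summed, assembling $c_i(\alpha,\beta)$ is routine. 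An alternative, which I would keep in reserve in case the direct residue computation is unwieldy, is to multiply both sides by $\prod_{i=1}^N(1-[i]_q t(1-q^{-i}/\alpha^2))$ for large $N$, reducing to a polynomial identity in $t$ of bounded degree, and then verify it by evaluating at the $N+1$ points $t=t_0,\dots,t_N$ (plus $t=0$), which is the same computation organized as a finite linear-algebra/Lagrange-interpolation argument rather than an infinite partial-fraction expansion.
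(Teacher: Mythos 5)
Your plan is essentially the paper's own proof: the paper writes out your residue data explicitly as the partial-fraction identity \eqref{eq:keydecomp} with coefficients $\gamma_k(i)$ (the residue contribution at $t_i=1/([i]_q(1-q^{-i}/\alpha^2))$ from the $k$-th summand), interchanges the two sums, and evaluates the resulting series $\sum_{k\ge i}$ by the nonterminating ${}_1\phi_1$ summation formula \cite[II.5]{Gas-Rah}. One correction to your bookkeeping: for each fixed $k$ the product over $j\ne i$, $1\le j\le k$ is finite, so the base-$p$ infinite products in $c_i(\alpha,\beta)$ do not arise from the ``$j>i$ part'' of that product but from the final nonterminating ${}_1\phi_1$ summation over $k\ge i$ (it is this summation, not the $q$-Gauss sum or the $q$-binomial theorem, that is needed).
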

\begin{proof}
Note the following partial fraction decomposition formula:
$$
\frac{t^k}{(1-a_1t)(1-a_2t)\ldots (1-a_kt)}=\frac{(-1)^k}{a_1\cdots
a_k}+\sum_{i=1}^k\frac{a_i^{-1}\prod_{j=1, j\neq
i}^{k}(a_i-a_j)^{-1}}{1-a_it}.
$$
Therefore
\begin{align}\label{eq:keydecomp}
\frac{t^k}{\prod_{i=1}^k(1-[i]_qt(1-q^{-i}/\alpha^2))}
=\sum_{i=0}^k\frac{\gamma_k(i)}{1-[i]_qt(1-q^{-i}/\alpha^2)},
\end{align}
where
\begin{align*}
\gamma_k(i)=\frac{1}{k!_q}{k\brack i}_q
\frac{\alpha^{2(k-i)}q^{{k\choose
2}+k-i^2}}{(q^{1-2i}/\alpha^2;q)_i(q^{1+2i}\alpha^2;q)_{k-i}}
 \qquad (0\leq i\leq k).
\end{align*}
Substituting this in \eqref{eq:partialfraction} yields
\begin{align*}
c_i(\alpha,\beta)&=\sum_{k\geq i}
\frac{(\alpha\beta;q)_k}{(q;q)_k}{k\brack i}_q
\frac{q^{k-i^2}\alpha^{-2i}}{(q^{1-2i}/\alpha^2;q)_i(q^{1+2i}\alpha^2;q)_{k-i}}\\
&=\frac{(\alpha\beta ;q)_i}{(q;q)_i}
\frac{q^{i-i^2}\alpha^{-2i}}{(q^{1-2i}/\alpha^2;q)_i} \sum_{k\geq 0}
\frac{(\alpha\beta q^i;q)_k}{(q;q)_k}
\frac{q^k}{(q^{1+2i}\alpha^2;q)_{k}}.
\end{align*}
The result  follows then by applying the ${}_1\Phi_1$ summation
formula (see \cite[II.5]{Gas-Rah}).
\end{proof}

\begin{thm} The moments $\mu_n(\alpha, \beta)$ have the explicit formula
\begin{align}
\mu_n(\alpha,\beta)= \sum_{k=1}^n\sum_{i=1}^k
 {k\brack i}_q     \frac{q^{k-i^2} \alpha^{-2i}}{(q;q)_k}
  \frac{([i]_q(1-q^{-i}/\alpha^2))^n  (\alpha\beta;q)_k}   {(q^{1-2i}/\alpha^2;q)_i (q^{1+2i}\alpha^2;q)_{k-i}}.
\end{align}
\end{thm}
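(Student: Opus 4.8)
The plan is to combine the two preceding lemmas. Lemma~\ref{th:ACmomformula} already gives
$\mu_n(\alpha,\beta)=\sum_{k=1}^n S_q(n,k,1/\alpha^2)(\alpha\beta;\,q)_k q^{-{k\choose 2}}(1-q)^{-k}\alpha^{-2k}$,
so the only thing missing is an explicit expansion of $S_q(n,k,1/\alpha^2)$ as a sum over $i$. First I would note that, multiplying both sides of \eqref{eq:y-stirling} (with $y=1/\alpha^2$) by $t^n$ and summing over $n$, the defining relation for the $y$-Stirling numbers of the second kind translates into the rational generating function
$$
\sum_{n\geq k}S_q(n,k,1/\alpha^2)\,t^n=\frac{t^k}{\prod_{i=1}^k\bigl(1-[i]_q t(1-q^{-i}/\alpha^2)\bigr)},
$$
which is exactly the left-hand summand appearing in \eqref{eq:keydecomp}. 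Hence from the partial fraction decomposition \eqref{eq:keydecomp} proved inside Lemma~\ref{lem2} one reads off, by extracting the coefficient of $t^n$,
$$
S_q(n,k,1/\alpha^2)=\sum_{i=1}^k \gamma_k(i)\,\bigl([i]_q(1-q^{-i}/\alpha^2)\bigr)^n,
\qquad
\gamma_k(i)=\frac{1}{k!_q}{k\brack i}_q\frac{\alpha^{2(k-i)}q^{{k\choose 2}+k-i^2}}{(q^{1-2i}/\alpha^2;q)_i(q^{1+2i}\alpha^2;q)_{k-i}},
$$
where the $i=0$ term drops out because $[0]_q=0$ contributes only for $n=0$.

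Next I would substitute this expression for $S_q(n,k,1/\alpha^2)$ into the formula of Lemma~\ref{th:ACmomformula}, giving a double sum over $1\le i\le k\le n$ with summand
$$
\gamma_k(i)\,\bigl([i]_q(1-q^{-i}/\alpha^2)\bigr)^n\,(\alpha\beta;q)_k\, q^{-{k\choose 2}}(1-q)^{-k}\alpha^{-2k}.
$$
The remaining work is bookkeeping: the factor $q^{{k\choose 2}}$ in $\gamma_k(i)$ cancels the $q^{-{k\choose 2}}$, the $k!_q$ in $\gamma_k(i)$ combines with $(1-q)^{-k}$ to produce $(q;q)_k$ in the denominator (using $k!_q (1-q)^k = (q;q)_k$), and the powers of $\alpha$ collect as $\alpha^{2(k-i)}\alpha^{-2k}=\alpha^{-2i}$, while the powers of $q$ collect as $q^{k-i^2}$. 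This yields precisely
$$
\mu_n(\alpha,\beta)=\sum_{k=1}^n\sum_{i=1}^k {k\brack i}_q\frac{q^{k-i^2}\alpha^{-2i}}{(q;q)_k}\frac{\bigl([i]_q(1-q^{-i}/\alpha^2)\bigr)^n(\alpha\beta;q)_k}{(q^{1-2i}/\alpha^2;q)_i(q^{1+2i}\alpha^2;q)_{k-i}},
$$
which is the claimed identity.

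I do not expect a genuine obstacle here, since both lemmas do the substantive analytic work; the only point requiring a little care is the translation of the $y$-Stirling recursion \eqref{eq:y-stirling} into the generating-function form above and the justification that the coefficient extraction in \eqref{eq:keydecomp} is legitimate term by term (this is clear because, expanding each $1/(1-[i]_q t(1-q^{-i}/\alpha^2))$ as a geometric series, both sides are formal power series in $t$ with coefficients that are rational functions of $q$ and $\alpha$). One should also remark that the $i=0$ contribution to \eqref{eq:keydecomp}, namely the constant term $\gamma_k(0)$, corresponds to $S_q(n,k,1/\alpha^2)$ at $n<k$ or to the vanishing $[0]_q^n$ for $n\ge 1$, and hence may be dropped from the final formula for $\mu_n$ with $n\ge 1$. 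With that observed, the proof is a direct concatenation of Lemmas~\ref{th:ACmomformula} and~\ref{lem2}.
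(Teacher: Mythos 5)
Your proposal is correct and follows essentially the same route as the paper: derive the generating function $\sum_{n\geq k}S_q(n,k,y)t^n=t^k/\prod_{i=1}^k(1-[i]_qt(1-q^{-i}y))$, extract coefficients via the partial fraction decomposition \eqref{eq:keydecomp} to get an explicit formula for $S_q(n,k,1/\alpha^2)$, and substitute it into Lemma~\ref{th:ACmomformula}. The only cosmetic difference is that the paper first records the recurrence $S_q(n,k,y)=S_q(n-1,k-1,y)+[k]_q(1-yq^{-k})S_q(n-1,k,y)$ to justify the generating function, a step you gloss over but which is immediate from the definition \eqref{eq:y-stirling}.
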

\begin{proof}
By definition \eqref{eq:y-stirling} we have
$$
S_q(n,k,y)=S_q(n-1,k-1,y)+[k]_q(1-yq^{-k})S_q(n-1,k,y).
$$
Therefore
\begin{align}\label{eq:gfstirling}
\sum_{n\geq
k}S_q(n,k,y)t^n=\frac{t^k}{\prod_{i=1}^k(1-[i]_qt(1-q^{-i}y))}.
\end{align}
It follows from \eqref{eq:keydecomp} and \eqref{eq:gfstirling} that
\begin{equation}
S_q(n,k,y)=\frac{q^{-{k\choose 2}}}{k!_q} \sum_{i=1}^k {k\brack i}_q
y^{i-k}q^{k^2-i^2}
\frac{([i]_q(1-q^{-i}y))^n}{(q^{1-2i}y;q)_i(q^{1+2i}/y;q)_{k-i}}.
\end{equation}
Substituting this into \eqref{eq:explicit1} yields the desired formula.
\end{proof}

By Lemma~\ref{th:ACmomformula}  and \eqref{eq:gfstirling} we obtain
the generating function for the moments $\mu_n(\alpha,\beta)$:
\begin{equation}\label{momAC}
\sum_{n=0}^\infty \mu_n(\alpha,\beta) t^n= \sum_{k=0}^\infty
\frac{(\alpha\beta;\,q)_k q^{-{k\choose
2}}(1-q)^{-k}\alpha^{-2k}t^k}{\prod_{i=1}^k(1-[i]_q
t(1-q^{-i}/\alpha^2))}.
\end{equation}

The moment of $q$-Charlier polynomials corresponds to the
$\beta=0$, $\alpha=-1/\sqrt{a(1-q)}$ case, while that of
$q$-Laguerre polynomials corresponds to  the $\alpha=1/\sqrt{y}$,
$\alpha\beta=q$ case. Therefore,
\begin{align}
\sum_{n=0}^\infty \mu_n^{(c)}(a,q)t^n&=\sum_{k=0}^\infty \frac{(aqt)^k}{\prod_{i=1}^k(q^i-q^i[i]_qt+a(1-q)[i]_qt)};\label{eq:momcharlier}\\
\sum_{n=0}^\infty \mu_n^{(\ell)}(y,q)t^n&=\sum_{k=0}^\infty
\frac{k!_q(qty)^k}{\prod_{i=1}^k(q^i-q^i[i]_qt+[i]_qty)}.\label{eq:momlaguerre}
\end{align}
By Lemma~\ref{lem2},   we obtain, setting $p=1/q$,
\begin{align}
\sum_{n=0}^\infty \mu_n^{(c)}(a,q)t^n&=\sum_{i\geq 0}
\frac{a^i q^{2i}(1-a(1-q)p^{2i})/(a(1-q)p^i;p)_\infty}{i!_q q^{i^2}(q^i-q^i[i]_qt+a[i]_qt(1-q))},\label{eq:part}\\
\sum_{n=0}^\infty\mu_n^{(\ell)}(y,q)t^n&= \sum_{i\geq
0}\frac{y^i(q^{2i}-y)}{q^{i^2}(q^i-q^i[i]_{q}t+[i]_{q}ty)}.\label{eq:wi}
\end{align}
We derive then 
the following polynomial formulae in $a$ and 
$y$ for the corresponding moments:
\begin{align}
\mu_n^{(c)}(a,q)&= \sum_{k=1}^n a^k \sum_{l=0}^{k} \frac{[k-l]_q^n(-1)^{l}}{(k-l)!_q } 
\sum_{j=0}^{l} \frac{(1-q)^{j}}{(l-j)!_{q}} q^{\binom{l-j+1}{2}-k(k-l)}\biggl( \binom{n}{j} q^{k-l}+\binom{n}{j-1}
\biggr);\label{eq:momencharlier}\\
\mu_n^{(\ell)}(y,q)&=\sum_{k=1}^ny^k\sum_{i=0}^{k-1}(-1)^i[k-i]_q^n
q^{k(i-k)} \left({n\choose i}q^{k-i}+{n\choose i-1}\right).\label{eq:momentw}
\end{align}
Note that \eqref{eq:momencharlier} is simpler than the formula given in \cite[Proposition 5]{KSZ}.

%%%%****%%%%****%%%%****%%%%****%%%%****%%%%****%%%%****%%%%*
%%%%
%%%% New Section
%%%%
%%%%****%%%%****%%%%****%%%%****%%%%****%%%%****%%%%****%%%%*
\section{Linearization coefficients of the $q$-Laguerre polynomials}
Define the linearization coefficients of the $q$-Laguerre polynomials by
$$
I(n_{1},\ldots, n_{k})={{\mathcal L}}_q(L_{n_1}(x;\,q)\ldots L_{n_k}(x;\,q))\quad (k\geq 1,  n_{1},\ldots, n_{k}\geq 0).
$$
The following is our main result of this section.
\begin{thm}\label{eq:qlinear} We have
\begin{align}\label{eq:qlin}
I(n_{1},\ldots, n_{k})=\sum_{\sigma\in \D(n_1,\ldots, n_k)}y^{\exc
(\sigma)}q^{\cro(\sigma)}.
\end{align}
\end{thm}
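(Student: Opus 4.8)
The plan is to set up a generating-function identity paralleling the one used in Theorem~\ref{thm:alsalamlinear}, but now adapted to the $q$-Laguerre rescaling \eqref{eq:polydef}. First I would translate everything through \eqref{eq:polydef}: since $L_n(x;q)$ is (up to the explicit power of $\sqrt{y}/(q-1)$) an Al-Salam-Chihara polynomial in the variable $u=\frac{(q-1)x+y+1}{2\sqrt y}$ with parameters $\alpha=1/\sqrt y$, $\beta=\sqrt y q$, the product ${\mathcal L}_q(L_{n_1}(x;q)\cdots L_{n_k}(x;q))$ becomes a constant multiple of $\hat{\mathcal L}_q(Q_{n_1}(u)\cdots Q_{n_k}(u))$ at $\alpha\beta=q$. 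For $k=3$ the Al-Salam-Chihara linearization formula \eqref{eq:alsalamlinear} then yields a closed double sum for $I(n_1,n_2,n_3)$; for general $k$ one iterates \eqref{eq:alsalamlinear}, or better, uses the $k$-fold generating function analogue of \eqref{eq:1} obtained by repeatedly applying the Askey-Wilson integral. So the analytic side produces an explicit (multi-)sum formula for $I(n_1,\dots,n_k)$, valid for all $k$.

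Second, and this is the combinatorial heart, I would show that the same multi-sum equals $\sum_{\sigma\in\D(n_1,\dots,n_k)} y^{\wex(\sigma)}q^{\cro(\sigma)}$. The natural route is the Flajolet–Viennot theory of continued fractions and weighted Motzkin paths. From the recurrence \eqref{eq:recurrqlaguerre} the moments ${\mathcal L}_q(x^n)=\mu_n^{(\ell)}(y,q)$ have the Jacobi continued fraction \eqref{eq:co} with $b_n=y[n+1]_q+[n]_q$, $\lambda_n=y[n]_q^2$, and by \eqref{defmoment} these are the generating polynomials of $\S_n$ by $(\wex,\cro)$. The standard bijective machinery (Foata–Zeilberger / Corteel-type encodings of permutations by labelled lattice paths, or the history of the associated ``Laguerre configurations'') then interprets ${\mathcal L}_q(L_{n_1}(x;q)\cdots L_{n_k}(x;q))$ as a signed sum over such path histories; the alternating signs in the explicit expansion \eqref{eq:explaguerre} of each $L_{n_i}$ must cancel against all histories in which some arc $i\to\sigma(i)$ has both endpoints in a common segment $S_j$, leaving exactly the generalized derangements. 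Concretely I would build a sign-reversing involution on the set of (permutation, factorization-choice) pairs coming from expanding each $L_{n_i}(x;q)$ via \eqref{eq:explaguerre} and then applying the combinatorial model for $\mu_n^{(\ell)}$; the involution pairs a ``bad'' arc within some $S_j$ with a corresponding term of opposite sign, exactly as in the Ismail–Stanton–Viennot proof for $q$-Hermite and the Kim–Stanton–Zeng proof for $q$-Charlier, to which this is the Laguerre counterpart.

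The main obstacle will be the construction and verification of that sign-reversing involution in the presence of two statistics, $\wex$ and $\cro$, simultaneously: one must choose the involution so that it preserves both the number of weak excedances and the number of crossings (equivalently, so that the $q$-weight from the path model is left invariant), not merely the underlying set of surviving permutations. This is delicate because the $q^{k(k-n)}$ and ${n\brack k}_q$ factors in \eqref{eq:explaguerre} contribute their own powers of $q$, and these must be matched against crossing statistics in the path history in a weight-preserving way. I expect the cleanest formulation routes through the quadrabasic Laguerre model of Simion–Stanton referenced after \eqref{eq:explaguerre}: realize $L_n(x;q)$ combinatorially there, stack $k$ such configurations, and use the ``linked'' structure so that the involution is simply ``toggle the smallest bad arc.'' Once the involution is in place, checking that the two analytic/combinatorial formulas for $I(n_1,\dots,n_k)$ agree for $k=3$ (via \eqref{eq:alsalamlinear} specialized at $\alpha\beta=q$, $\alpha=1/\sqrt y$) serves as a consistency check, and the general $k$ case follows from the path model directly without needing the iterated Askey-Wilson computation. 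I would also record, as a corollary, that setting $q=1$ recovers \eqref{eq:ordlaguerre} and that the $n_i=1$ specialization recovers \eqref{defmoment}.
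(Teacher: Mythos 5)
There is a genuine gap: what you describe is a plan, not a proof, and the missing piece is exactly the step the authors themselves state they could not supply. Your second paragraph proposes to finish ``\`a la Viennot'' with a sign-reversing involution that kills all histories containing an arc inside a segment $S_j$, while preserving \emph{both} statistics $\wex$ and $\cro$ (equivalently, the full $y,q$-weight, including the $q^{k(k-n)}$ and ${n\brack k}_q$ contributions from \eqref{eq:explaguerre}). You correctly identify this as ``the main obstacle,'' but you never construct the involution nor verify weight preservation; ``toggle the smallest bad arc'' is precisely the kind of naive involution that fails to be $q$-weight-preserving here, and the paper explicitly says ``For the time being we do not have such a proof to offer'' and again, in a remark after Theorem~\ref{lin3}, that finding a combinatorial proof in the style of \cite{ISV,KSZ} remains open (it is only known for $q=1$). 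Your first paragraph does not close the gap either: iterating \eqref{eq:alsalamlinear} or the Askey--Wilson integral gives, at best, an unwieldy multi-sum for general $k$ which you never match against the derangement sum; indeed you end by saying the general case ``follows from the path model directly,'' which puts all the weight back on the unconstructed involution.

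For contrast, the paper's actual argument avoids the involution entirely and is inductive. From $L_1(x;q)=x-y$ and the three-term recurrence \eqref{eq:recurrqlaguerre} one gets the relation \eqref{keyrecurrenceprodpoly}, and the sequence $I(n_1,\ldots,n_k)$ is then completely determined by that recurrence together with the values $I(1^k)$ and the symmetry of $I$ in its arguments. The proof therefore reduces to three facts about the right-hand side of \eqref{eq:qlin}: the values at $(1^k)$ equal $d_n(y,q)$ (Lemma~\ref{lem: n=1}, by binomial inversion from \eqref{defmoment}); invariance under cyclic shift and under transposing the first two block sizes (Lemmas~\ref{lem:1} and~\ref{lem:2}, proved in Sections~5 and~6 by two explicit bijections $\Phi_k$ and $\Gamma^{(n_1,n_2)}$ shown to preserve $(\wex,\cro)$); and the same recurrence \eqref{keyrecurrence}, verified by a four-case bijective analysis according to the positions of $\sigma(1)$ and $\sigma^{-1}(1)$. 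If you want to pursue your route, you must either produce and verify the weight-preserving killing involution (an open problem as of this paper), or else fall back on an inductive scheme of the paper's type; as written, your argument does not establish \eqref{eq:qlin}.
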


For brevity, if
$n_1=\ldots=n_k=1$, we shall write $(1^{k}):=(n_1,\ldots,n_k)$ and  $\D_k:=\D(1^k)$.
Hence  $\D_n$ is just the set of  usual  derangements of $[n]$. Define also
$$
d_{n}(y,q)=\sum_{\sigma\in
\D_n}y^{\wex(\sigma)}q^{\cro(\sigma)}.
$$

A proof  \`a la Viennot (cf. \cite{ISV,KSZ}) of \eqref{eq:qlin}
would use the combinatorial interpretations  for the moments and
$q$-Laguerre polynomials to rewrite the left-hand side of
\eqref{eq:qlin} and then construct an adequate  \emph{killing
involution} on the resulting set. For the time being we do not have
such a proof to offer, instead we provide an inductive proof.

Since  $L_1(x;q)=x-y$, writing \eqref{eq:recurrqlaguerre} as
$$
L_1(x;q)L_n(x;q)=L_{n+1}(x;q)+(y q+1)[n]_qL_n(x;q)+y[n]_q^2L_{n-1}(x;q),
$$
we see immediately that
\begin{align}\label{keyrecurrenceprodpoly}
I(1,n, n_{1},\ldots, n_{k})&=
I(n+1, n_{1},\ldots, n_{k})\nonumber\\
&+(y q+1)[n]_q\,I(n, n_{1},\ldots, n_{k})
+y [n]_q^2\,I(n-1,n_{1},\ldots, n_{k}).
\end{align}
Therefore,  the sequence
$(I(n_{1},\ldots, n_{k}))$ ($k\geq 1, n_{1}, \ldots, n_{k}\geq 0$)  is
completely determined  by the  recurrence relation \eqref{keyrecurrenceprodpoly} and the following items:
\begin{itemize}
\item[(i)] the special values $I(1^{k})$ for all $k\geq 1$,
\item[(ii)]   the symmetry of $I(n_{1},\ldots, n_{k})$ with respect to the  indices
$n_1,\ldots,n_k$.
\end{itemize}
Our proof of Theorem~\ref{eq:qlinear} will consist in verifying that the right-hand side  of \eqref{eq:qlin} has the same special values  at $(1^{k})$ as the right-hand side, is invariant by  rearrangement of the indices and  satisfies  the same recurrence relation.

%%%%%%%%%
\begin{lem} \label{lem: n=1}
We have $I(1^{n})=d_{n}(y,q)$ for all $n\geq 1$.
% ${{\mathcal
%L}}_q(L_1(x;q)^n)=\sum_{\sigma\in
%\D_n}y^{\wex(\sigma)}q^{\cro(\sigma)}$.
\end{lem}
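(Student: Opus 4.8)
The plan is to expand $I(1^{n})=\mathcal{L}_q\bigl((x-y)^n\bigr)$ by the binomial theorem and to recognise the resulting alternating sum as the binomial inverse of the moment sequence $\mu_j^{(\ell)}(y,q)$; that inverse is then identified with $d_n(y,q)$ through the decomposition of a permutation into its fixed points together with the derangement carried by the remaining points.

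First, since $L_1(x;q)=x-y$, linearity of $\mathcal{L}_q$ gives
\[
I(1^{n})=\mathcal{L}_q\bigl((x-y)^n\bigr)=\sum_{j=0}^{n}\binom{n}{j}(-y)^{n-j}\,\mu_j^{(\ell)}(y,q).
\]
Next I would establish the combinatorial identity
\[
\mu_n^{(\ell)}(y,q)=\sum_{m=0}^{n}\binom{n}{m}y^{m}\,d_{n-m}(y,q).
\]
For this, observe that a permutation $\sigma\in\S_n$ is completely determined by its set of fixed points $F=\mathrm{Fix}(\sigma)$, with $|F|=m$, together with the fixed-point-free permutation $\tau$ obtained by restricting $\sigma$ to $[n]\setminus F$ and relabelling order-preservingly onto $[\,n-m\,]$; conversely any pair $(F,\tau)$ with $\tau\in\D_{n-m}$ arises in this way. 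In the arc diagram of $\sigma$ each element of $F$ is an isolated vertex, hence a weak excedance incident to no arc, so it contributes nothing to $\cro$, and deleting these vertices and restandardising preserves $\wex$ and $\cro$ of the remaining configuration. Thus $\wex(\sigma)=\wex(\tau)+m$ and $\cro(\sigma)=\cro(\tau)$, and summing over the $\binom{n}{m}$ choices of $F$ and over $\tau\in\D_{n-m}$ yields the displayed identity.

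Finally, the two displays are conjugate under binomial inversion with parameter $y$: rewriting the second display as $\mu_n^{(\ell)}(y,q)=\sum_{k}\binom{n}{k}y^{\,n-k}d_k(y,q)$ and inverting gives $d_n(y,q)=\sum_{k=0}^{n}\binom{n}{k}(-y)^{\,n-k}\mu_k^{(\ell)}(y,q)$, which is exactly the expression for $I(1^{n})$ obtained in the first step. Hence $I(1^{n})=d_n(y,q)$. (Alternatively, steps two and three can be fused into a single sign-reversing involution on the set of pairs consisting of a subset of $[n]$ and a permutation of it.) The only genuinely delicate point is the invariance of $\cro$ and $\wex$ under deleting fixed points and restandardising: this is transparent from the arc-diagram picture, but to be safe it should be checked against the defining formula for $\cro$, namely that a fixed point $i$ (so $\sigma(i)=i$) can serve as neither the inner nor the outer index of a crossing pair in either of the two sums, and that the order-preserving relabelling $[n]\setminus F\to[\,n-m\,]$ respects all the weak and strict inequalities occurring in that formula as well as the relation $i\le\sigma(i)$ defining weak excedances. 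Everything else is routine.
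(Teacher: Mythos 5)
Your proposal is correct and follows essentially the same route as the paper: expand $\mathcal{L}_q((x-y)^n)$ binomially, reduce by binomial inversion to the identity $\mu_n^{(\ell)}(y,q)=\sum_k\binom{n}{k}y^k d_{n-k}(y,q)$, and prove that identity by splitting a permutation into its fixed-point set and the induced derangement. The only difference is that you spell out the fixed-point/derangement decomposition and the invariance of $\wex$ and $\cro$ under deleting fixed points, which the paper dismisses as ``obvious.''
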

\begin{proof}
Since  $L_1(x;q)=x-y$,
by definition,
$$
I(1^{n})={{\mathcal L}}_q((x-y)^n)=\sum_{k=0}^n(-1)^{n-k}{n\choose k}y^{n-k}
 \mu_k^{(\ell)}(y,q).
$$
By binomial inversion and \eqref{defmoment}, it suffices to prove that
$$
 \sum_{\sigma\in
S_n}y^{\exc(\sigma)}q^{\cro(\sigma)}=\sum_{k=0}^n{n\choose k}y^kd_{n-k}(y,q).
$$
But the latter identity is obvious.
\end{proof}

Since the two cyclic permutations $(1,2)$ and $(1,2,3,\ldots,k)$
generate the symmetric group $\S_k$,
the invariance of $\sum_{\sigma\in \D(n_1,n_2,\ldots, n_k)}y^{\wex
(\sigma)}q^{\cro(\sigma)}$ by permuting the $n_i's$  will
be  a consequence  of the following two special cases.
\begin{lem}\label{lem:1} We have
\begin{align}\label{eqlem:1}
\sum_{\sigma\in \D(n_1,n_2,\ldots, n_k)}y^{\exc
(\sigma)}q^{\cro(\sigma)}&= \sum_{\sigma\in
\D(n_2,n_3,\ldots,n_k,n_1)}y^{\exc (\sigma)}q^{\cro(\sigma)}.
\end{align}
\end{lem}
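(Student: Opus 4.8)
The plan is to realize \eqref{eqlem:1} by an explicit bijection: conjugation by the cyclic rotation of the ground set that moves the first segment past all the others. Write $n=n_1+\cdots+n_k$ and $m=n_1$, so the segments for the left specification are $S_1=[1,m],S_2,\ldots,S_k$ and those for $(n_2,\ldots,n_k,n_1)$ are their cyclic shift. Let $\rho$ be the rotation of $[n]$ with $\rho(i)=i-m$ for $i>m$ and $\rho(i)=i+(n-m)$ for $i\le m$, and set $\Phi(\sigma)=\rho\sigma\rho^{-1}$. Since $\rho$ carries the segment partition of the first specification onto that of the second, $\Phi$ restricts to a bijection $\D(n_1,\ldots,n_k)\to\D(n_2,\ldots,n_k,n_1)$, and it remains to check $\wex(\Phi(\sigma))=\wex(\sigma)$ and $\cro(\Phi(\sigma))=\cro(\sigma)$ for every $\sigma\in\D(n_1,\ldots,n_k)$.

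For the weak excedances I would use the defining feature of generalized derangements: no arc of $\sigma$ has both endpoints in one segment, so $\sigma$ has no fixed point and every arc meeting $S_1$ has exactly one endpoint there. Partition the arcs into type $I$ (both endpoints in $[m+1,n]$), type $\mathcal O$ (source in $S_1$, hence an upper arc since the target is $>m$), and type $\mathcal N$ (target in $S_1$, hence a lower arc); note $|\mathcal O|=|\mathcal N|=m$. Under $\Phi$ a type-$I$ arc is translated by $-m$, so it keeps its upper/lower status and all its relative positions; a type-$\mathcal O$ arc becomes a lower arc from $\rho(S_1)=[n-m+1,n]$ into $[1,n-m]$; and a type-$\mathcal N$ arc becomes an upper arc from $[1,n-m]$ into $\rho(S_1)$. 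Hence the number of upper arcs of $\sigma$, which is $\#\{\text{upper type-}I\}+|\mathcal O|$, is replaced by $\#\{\text{upper type-}I\}+|\mathcal N|$, the same number; as there are no fixed points, $\wex$ is preserved.

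The crossings are the main obstacle. I would split $\cro(\sigma)$ according to the types of the two arcs involved. The $(I,I)$, $(\mathcal O,\mathcal O)$ and $(\mathcal N,\mathcal N)$ contributions are each individually preserved: for type $I$ because $\Phi$ only translates; for $\mathcal O$ (resp.\ $\mathcal N$) because all these arcs cross the boundary of $S_1$ in the same direction, so two of them cross precisely when the associated partial bijection has a non-inversion there, and $\Phi$ re-reads that same data with the orientation reversed. The delicate part is the \emph{mixed} contribution, from crossings between a type-$I$ arc and a type-$\mathcal O$ or type-$\mathcal N$ arc. Here $\Phi$ does not preserve crossings arc by arc: a type-$I$-upper arc crossing an $\mathcal O$-arc in $\sigma$ need not produce a crossing of the images, whose up/down types now differ, but a compensating crossing appears, for instance between a type-$I$-lower arc and that $\mathcal O$-arc's image. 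The heart of the argument is that the \emph{total} number of mixed crossings is nevertheless invariant. I would prove this by swapping the order of summation: express the mixed count on each side as $\sum_{v\in\sigma(S_1)}\nu(v)+\sum_{w\in\sigma^{-1}(S_1)}\nu'(w)$, where $\nu(v)$ (resp.\ $\nu'(w)$) is the number of type-$I$ arcs ``straddling'' the point $v$ (resp.\ $w$), the straddling interval being closed at the arc's source and open at its target for the upward count and dually for the downward count; one then checks that the difference of the two sides telescopes to zero, point by point over $\sigma(S_1)\cup\sigma^{-1}(S_1)$, the only surviving terms recording whether a given point is the source or the target of an upper/lower type-$I$ arc. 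Granting this, $\cro(\Phi(\sigma))=\cro(\sigma)$, and \eqref{eqlem:1} follows.

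The one genuinely hard step is thus this last count, precisely because the mixed-crossing identity fails arc by arc and needs the global bookkeeping above; the bijection itself, the segment accounting, and the weak-excedance and same-type invariances are routine. Finally, \eqref{eqlem:1} supplies only the $k$-cycle generator of $\S_k$; combined with the corresponding statement for the transposition $(1,2)$ it gives the full invariance of $\sum_{\sigma\in\D(n_1,\ldots,n_k)}y^{\wex(\sigma)}q^{\cro(\sigma)}$ under permutations of $n_1,\ldots,n_k$, which is item (ii) of the strategy for Theorem~\ref{eq:qlinear}.
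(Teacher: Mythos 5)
Your bijection is exactly the paper's map $\Phi_{n_1}$ of Section~5: conjugation by the rotation $\rho$ reproduces, case by case, the three lines defining $\Phi_k$ in Table~1, and your reduction is also the paper's. Your type-$I$/type-$I$ crossings are the set $L_1$, your $(\mathcal O,\mathcal O)$ and $(\mathcal N,\mathcal N)$ crossings are $L_2$, your ``mixed'' crossings are $L_3(\sigma)$, and the mixed crossings of $\Phi(\sigma)$, read back on $\sigma$, are precisely $L_4(\sigma)$; the weak-excedance argument and the pairwise preservation of same-type crossings coincide with Rows 1--2 of Table~3 in the proof of Proposition~\ref{prop:Phi}, and the passage from the invariance of $(\exc,\cro)$ to Lemma~\ref{lem:1} is the same as in the paper.

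The gap is exactly the step you single out as hard: the invariance of the total mixed count, i.e.\ the paper's key (unnumbered) lemma $|L_3(\sigma)|=|L_4(\sigma)|$, which you assert but do not prove, and the mechanism you describe for it is not what actually happens. Writing $m=n_1$, $\sigma(S_1)=\{i_1<\cdots<i_m\}$, $\sigma^{-1}(S_1)=\{j_1<\cdots<j_m\}$, the two counts are
\begin{align*}
|L_3(\sigma)|&=\sum_{s=1}^m\#\{a>m:\,a\le i_s<\sigma(a)\}+\sum_{s=1}^m\#\{a:\,m<\sigma(a)<j_s<a\},\\
|L_4(\sigma)|&=\sum_{s=1}^m\#\{a>m:\,a<j_s\le\sigma(a)\}+\sum_{s=1}^m\#\{a:\,m<\sigma(a)<i_s<a\},
\end{align*}
so already the bookkeeping in your sketch is off: the lower-arc straddling is strict at both ends on both sides (not ``dual'' to the upper one), and the roles of $\sigma(S_1)$ and $\sigma^{-1}(S_1)$ are exchanged between the upper and lower parts, which your single formula $\sum_v\nu(v)+\sum_w\nu'(w)$ ``on each side'' conceals. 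More importantly, the difference does not telescope point by point. Using the balance $\#\{a:a\le x<\sigma(a)\}=\#\{a:a>x\ge\sigma(a)\}$ (the paper's \eqref{eq:Ai-sigma-sigma-1}), the net contribution of the point $i_s$ to $|L_3(\sigma)|-|L_4(\sigma)|$ is $\#\{t:j_t>i_s\}-\#\{t:i_t>i_s\}$ and that of $j_s$ is $-\bigl(1+\#\{t:j_t>j_s\}-\#\{t:i_t\ge j_s\}\bigr)$; neither vanishes in general (take $m=2$ with $i_1<i_2<j_1<j_2$), and what survives is not an indicator of whether the point is an endpoint of a type-$I$ arc but the relative order of the $\mathcal O$-targets and the $\mathcal N$-sources. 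The total vanishes only globally, via $\sum_s\#\{t:i_t>i_s\}=\sum_s\#\{t:j_t>j_s\}=\binom{m}{2}$ and $\sum_{s,t}\bigl(\chi(j_t>i_s)+\chi(i_t\ge j_s)\bigr)=m^2$, which is precisely the computation the paper carries out with the sets $A_i(\sigma)$. Until you perform this global count (or exhibit an honest pointwise matching, which your sketch does not), the proof is incomplete at its only nontrivial point.
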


\begin{lem}\label{lem:2}
We have
\begin{align}
\sum_{\sigma\in \D(n_1,n_2,\ldots, n_k)}y^{\exc
(\sigma)}q^{\cro(\sigma)}&= \sum_{\sigma\in
\D(n_2,n_1,n_3\ldots,n_k)}y^{\exc (\sigma)}q^{\cro(\sigma)}.\label{eqlem:2}
\end{align}
\end{lem}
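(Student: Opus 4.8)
The plan is to construct an explicit bijection
\[
\Phi\colon \D(n_1,n_2,n_3,\dots,n_k)\longrightarrow \D(n_2,n_1,n_3,\dots,n_k)
\]
that preserves both statistics, $\wex(\Phi(\sigma))=\wex(\sigma)$ and $\cro(\Phi(\sigma))=\cro(\sigma)$; summing over $\sigma$ this is exactly \eqref{eqlem:2}. Since the two tuples coincide when $n_1=n_2$, we may assume $n_1<n_2$. Set $N=n_1+n_2$ and let $C=\{N+1,\dots,n\}$ be the stretch carrying the common tail $S_3\cup\cdots\cup S_k$; the two specifications differ only in how the prefix $\{1,\dots,N\}$ is cut, into $A=\{1,\dots,n_1\}$, $B=\{n_1+1,\dots,N\}$ on the source side and into $A'=\{1,\dots,n_2\}$, $B'=\{n_2+1,\dots,N\}$ on the target side. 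By the definition of $\D$, the arc diagram of any $\sigma\in\D(n_1,\dots,n_k)$ has no arc with both endpoints in $A$, none with both endpoints in $B$, and no fixed point in $A\cup B$; consequently the arcs meeting $A\cup B$ are either $A$--$B$ arcs or arcs joining a point of $A\cup B$ to a point of $C$ (the latter all drawn above, since $A\cup B$ lies to the left of $C$).

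The first step is to peel off the part of $\sigma$ that $\Phi$ will leave strictly alone: the arc diagram restricted to $C$ (an arc diagram on $C$ avoiding arcs within each $S_j$, $j\ge3$), together with the left-to-right list of those points of $C$ at which an arc toward $A\cup B$ is attached, and of how it is attached (incoming/outgoing, above/below). One then verifies that $\wex(\sigma)$ and $\cro(\sigma)$ each split as a term depending only on this $C$-data plus a term depending only on the configuration of the arcs meeting $A\cup B$ and on the recorded attachment points. This reduces Lemma~\ref{lem:2} to a self-contained statement on two consecutive blocks $A,B$ equipped with prescribed ports toward $C$: the residual arc configuration is essentially a partial bijection between $A$ and $B$ decorated with pendant half-arcs at the ports, and one must show that its weight --- namely $y$ to the number of upper arcs (together with the fixed number of ports) times $q$ to the number of crossing or touching incidences it produces --- has generating function symmetric under interchanging $|A|=n_1$ and $|B|=n_2$.

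Building this residual bijection, and checking that it is weight-preserving, is the step I expect to be the main obstacle. The naive candidate --- conjugate $\sigma$ by the relabelling that carries $A$ order-isomorphically onto $B'$ and $B$ order-isomorphically onto $A'$ --- does land inside $\D(n_2,n_1,\dots,n_k)$, but, as one checks already on $\D(2,1,1)$ versus $\D(1,2,1)$ (where it sends $3412$ to $2143$ and so turns $\cro=2$ into $\cro=0$), it fails to preserve $\cro$. One is thus forced to follow it by a corrective sequence of local arc switches, in the spirit of the killing involutions of \cite{ISV,KSZ}, engineered so that every crossing or touching destroyed when the block $A$ is slid past the block $B$ is recreated elsewhere and the count of upper arcs is restored. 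Two points make this delicate: first, $\cro$ counts touchings among the upper arcs but only genuine crossings among the lower arcs, so moving an endpoint past a point it shares behaves differently above and below the line; second, the slid arcs interact with the pendant half-arcs running to $C$, and the ports they use must come out matched exactly on the two sides. Reconciling these two constraints is the heart of the proof.

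A possible alternative is to pass through the Foata--Zeilberger encoding of permutations by weighted labelled Motzkin paths (the \emph{Laguerre histories} underlying the continued fraction \eqref{eq:co}), translate the condition ``$i$ and $\sigma(i)$ lie in different segments'' into a restriction on the labelled path, and realize the block interchange as a manipulation of the path. The catch is that the segment containing a given point is not a local feature of the history, so one would have to drag segment labels along the path; whether this bookkeeping is really lighter than the direct diagram argument is unclear to me.
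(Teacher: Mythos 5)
Your plan stops exactly where the proof has to start. The setup (peeling off the part of the diagram supported on $C=\{N_2+1,\dots,n\}$, reducing to the arcs that meet the two blocks, and observing that the naive order-isomorphic block swap sends $3412\in\D(2,1,1)$ to $2143\in\D(1,2,1)$ and so destroys two crossings) is correct and matches the paper's framing, but the object the lemma needs --- an explicit $(\exc,\cro)$-preserving bijection onto $\D(n_2,n_1,n_3,\dots,n_k)$, or at least a proof that the residual weight is symmetric in $n_1,n_2$ --- is never produced. You say a ``corrective sequence of local arc switches'' must exist and that reconciling the upper/lower asymmetry of $\cro$ with the pendant half-arcs to $C$ is ``the heart of the proof,'' which is precisely the part left undone; as it stands the argument is a program, not a proof, and the Laguerre-history alternative is likewise only sketched and abandoned.

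For comparison, the paper's bijection $\Gamma^{(n_1,n_2)}$ is not a conjugation followed by corrections but a direct construction: arcs with both feet in $C$ are untouched; an arc inside $[1,N_2]$ written as $(i,N_2+1-j)$ (above) or $(k,N_2+1-\ell)$ (below) is sent to $(j,N_2+1-i)$, resp.\ $(\ell,N_2+1-k)$, i.e.\ the pair of feet is reflected in the midpoint of $[1,N_2]$ while keeping orientation; and the arcs joining $[1,N_2]$ to $C$ keep their $C$-endpoints, their $[1,N_2]$-feet being reassigned in an order-preserving way to the positions of $[1,N_2]$ left free by the previous step. Weak excedances are preserved by inspection, and $\cro$ is handled by splitting the crossings into five classes $G_1,\dots,G_5$ according to where the four endpoints sit relative to $N_2$: four classes are matched arc-by-arc, and the remaining class $G_5$ (an internal arc crossed by a pendant arc) is counted in closed form, $|G_5|=\sum_r(j_r-i_r)+\sum_r(\ell_r-k_r-1)-\binom{p+q}{2}$ in terms of the feet of the internal arcs, a formula visibly invariant under the reflection. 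This quantitative identity is what replaces your hoped-for system of compensating arc switches, and nothing in your proposal supplies an equivalent of it.
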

We postpone the proof of the above two lemmas to the next two
sections.

\medskip
%%%%%%%%%%

\noindent \emph{\bf Proof of Theorem~\ref{eq:qlinear}}. By Lemmas~\ref{lem: n=1}, \ref{lem:1} and \ref{lem:2},
 it   suffices to check that
\begin{align}\label{keyrecurrence}
\sum_{\sigma\in \D(1,n,n_1,\ldots, n_k)}w(\sigma)= \sum_{\sigma\in
\D(n+1,n_1,\ldots,n_k)}w(\sigma)
+(y q+1)[n]_q\sum_{\sigma\in \D(n,n_1,\ldots,n_k)}w(\sigma)\\
+y [n]_q^2\sum_{\sigma\in \D(n-1,n_1,\ldots,n_k)}w(\sigma),\nonumber
\end{align}
where $w(\sigma)=y^{\exc (\sigma)}q^{\cro(\sigma)}$.

 For derangements  $\sigma\in \D(1,n,n_1,\ldots, n_k)$ we will distinguish four cases.
 In  each case, we shall describe a mapping to compute the corresponding  enumerative polynomial.
The reader is refereed to  Table~1 and Table~4 in Section~5 for an illustration of these mappings in order
 to have a better understanding of their properties.

\begin{itemize}
\item[a)] $\sigma(1), \sigma^{-1}(1)>n+1$. We can identify such a derangement in $ \D(1,n,n_1,\ldots, n_k)$ with
a derangement in $\D(n+1,n_1,\ldots,n_k)$.  So the corresponding enumerative polynomial
is
$$
\sum_{\sigma\in \D(n+1,n_1,\ldots,n_k)}y^{\exc
(\sigma)}q^{\cro(\sigma)}.
$$

\item[b)] $\sigma(1)\in \{2,\ldots, n+1\}$ and $\sigma^{-1}(1)>n+1$. Let
$\sigma(1)=\ell$. We define the mapping $\sigma\mapsto \sigma'\in
\D(n,n_1,\ldots,n_k)$ by
$$
\left\{%
\begin{array}{ll}
   \sigma'(i)= \sigma(i+1)-1, & \hbox{if $1\leq i\leq n$;} \\
   \sigma'^{-1}(i)=\sigma^{-1}(i)-1 & \hbox{if $1\leq i\leq \ell-1$;} \\
   \sigma'^{-1}(i)=\sigma^{-1}(i+1)-1 & \hbox{if $\ell\leq i\leq n$;} \\
   \sigma'(i-1)=\sigma(i)-1 & \hbox{if $\sigma(i)> i>n+1$;} \\
   \sigma'^{-1}(i-1)=\sigma^{-1}(i)-1 & \hbox{if $\sigma^{-1}(i)>i>n+1$;} \\
\end{array}%
\right.
$$
 Clearly
$w(\sigma)=y q^{\ell-1}w(\sigma')$. Moreover, for each given $\ell \in \{2,\ldots, n+1\}$,  the above mapping is a bijection from permutations $\sigma \in
\D(1,n,n_1,\ldots, n_k)$ satisfying $\sigma(1)=\ell$ and $\sigma^{-1}(1)>n+1$ to permutations
in~$\D(n,n_1,\ldots,n_k)$.
Summing over all $\ell=2,\ldots, n+1$ yields the generating function:
$$
qy[n]_q\sum_{\sigma\in \D(n,n_1,\ldots,n_k)}y^{\exc
(\sigma)}q^{\cro(\sigma)}.
$$
%%%%%%%
\item[c)] $\sigma(1)>n+1$ and $\sigma^{-1}(1)\in \{2,\ldots, n+1\}$.
Let $\sigma^{-1}(1)=\ell$. We define the mapping $\sigma\mapsto \sigma'\in
\D(n,n_1,\ldots,n_k)$ by
$$
\left\{%
\begin{array}{ll}
   \sigma'(i)= \sigma(i)-1, & \hbox{if $1\leq i\leq \ell-1$;}\\
   \sigma'(i)=\sigma(i+1)-1 & \hbox{if $\ell\leq i\leq n$;} \\
   \sigma'^{-1}(i)=\sigma^{-1}(i+1)-1 & \hbox{if $1\leq i\leq n$;} \\
   \sigma'(i-1)=\sigma(i)-1 & \hbox{if $\sigma(i)> i>n+1$;} \\
   \sigma'^{-1}(i-1)=\sigma^{-1}(i)-1 & \hbox{if $\sigma^{-1}(i)>i>n+1$;} \\
\end{array}%
\right.
$$
 Clearly
$w(\sigma)=q^{\ell-2}w(\sigma')$. Moreover, for each given $\ell \in \{2,\ldots, n+1\}$,  the above mapping is a bijection from permutations $\sigma \in
\D(1,n,n_1,\ldots, n_k)$ satisfying $\sigma^{-1}(1)=\ell$ and $\sigma(1)>n+1$ to permutations
in~$\D(n,n_1,\ldots,n_k)$.  Summing over all $\ell=2,\ldots, n+1$
yields the generating function:
$$
[n]_q\sum_{\sigma\in \D(n,n_1,\ldots,n_k)}y^{\exc
(\sigma)}q^{\cro(\sigma)}.
$$
%%%%%%%%%%%
\item[d)] $\sigma(1), \sigma^{-1}(1)\in \{2,\ldots, n+1\}$.
Let $\sigma(1)=\ell_1$ and $\sigma^{-1}(1)=\ell_2$.
Then we define the mapping $\sigma\mapsto \sigma'\in \D(n-1,n_1,\ldots,n_k)$
by
$$
\left\{%
\begin{array}{ll}
   \sigma'(i)= \sigma(i+1)-2, & \hbox{if $1\leq i\leq \ell_2-2$;}\\
   \sigma'(i)=\sigma(i+2)-2 & \hbox{if $\ell_2-1\leq i\leq n-1$;} \\
   \sigma'^{-1}(i)=\sigma^{-1}(i+1)-2 & \hbox{if $1\leq i\leq \ell_1-2$;} \\
   \sigma'^{-1}(i)=\sigma^{-1}(i+2)-2 & \hbox{if $\ell_1-1\leq i\leq n-1$;} \\
   \sigma'(i-2)=\sigma(i)-2 & \hbox{if $\sigma(i)> i>n+1$;} \\
   \sigma'^{-1}(i-2)=\sigma^{-1}(i)-2 & \hbox{if $\sigma^{-1}(i)>i>n+1$;} \\
\end{array}%
\right.
$$
Clearly $w(\sigma)=y q^{(\ell_1+\ell_2-4)}w(\sigma')$.
Moreover, for each given $\ell_1,\ell_2\in \{2,\ldots, n+1\}$, the above mapping is a bijection from permutations $\sigma \in
\D(1,n,n_1,\ldots, n_k)$ satisfying $\sigma(1)=\ell_1$ and $\sigma^{-1}(1)=\ell_2$ to permutations
in~$\D(n-1,n_1,\ldots,n_k)$. Summing over all $\ell_1,\ell_2\in \{2,\ldots, n+1\}$ yields the
generating function:
$$
y[n]_q^2\sum_{\sigma\in \D(n-1,n_1,\ldots,n_k)}y^{\exc
(\sigma)}q^{\cro(\sigma)}.
$$

\end{itemize}
Summing up we obtain \eqref{keyrecurrence}. \qed

\medskip
When $k=2$, Theorem~\ref{eq:qlinear} reduces to  the orthogonality
of the $q$-Laguerre polynomials~\eqref{eq:orthog}. When $k=3$, we
can derive the following explicit formula from
Theorem~\ref{thm:alsalamlinear}.
%%%%%%%%%%%%%%%%%%%%
\begin{thm}\label{lin3} We have
\begin{align*}
I(n_{1}, n_{2}, n_{3})
&=\sum_{s}\frac{n_1!_q\,
n_2!_q\,n_3!_q\,s!_q\,y^s}{(n_1+n_2+n_3-2s)!_q
(s-n_3)!_q(s-n_2)!_q(s-n_1)!_q}\\
&\qquad \times \sum_{k}{n_1+n_2+n_3-2s\brack k}_q
y^{k}q^{{k+1\choose 2}+ {n_1+n_2+n_3-2s-k\choose 2}}.
\end{align*}
\end{thm}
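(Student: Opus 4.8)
The plan is to use that, by \eqref{eq:polydef}, the $q$-Laguerre polynomials are a rescaling of Al-Salam-Chihara polynomials, and to specialize the linearization formula of Theorem~\ref{thm:alsalamlinear}. Write $L_n(x;q)=r^n\,Q_n(\varphi(x);1/\sqrt y,\sqrt y\,q\mid q)$ with $r=\sqrt y/(q-1)$ and $\varphi(x)=\bigl((q-1)x+y+1\bigr)/(2\sqrt y)$; note that for this choice $\alpha\beta=q$. Multiplying this identity for $n=n_1$ with the one for $n=n_2$ and expanding the product $Q_{n_1}Q_{n_2}$ via \eqref{eq:alsalamlinear} gives
\[
L_{n_1}(x;q)L_{n_2}(x;q)=\sum_{n_3\ge0}r^{n_1+n_2-n_3}\,C_{n_1,n_2}^{n_3}\bigl(1/\sqrt y,\sqrt y\,q;q\bigr)\,L_{n_3}(x;q).
\]
Applying ${\mathcal L}_q$ and using the orthogonality relation \eqref{eq:orthog} in the form ${\mathcal L}_q(L_mL_{n_3})=y^{n_3}(n_3!_q)^2\delta_{m,n_3}$, one obtains the closed expression
\[
I(n_1,n_2,n_3)=y^{n_3}(n_3!_q)^2\,r^{n_1+n_2-n_3}\,C_{n_1,n_2}^{n_3}\bigl(1/\sqrt y,\sqrt y\,q;q\bigr).
\]

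It then remains to substitute the explicit formula of Theorem~\ref{thm:alsalamlinear} and to simplify. Setting $\alpha=1/\sqrt y$, $\beta=\sqrt y\,q$, every $(\alpha\beta;q)_j$ becomes $(q;q)_j$, and $\alpha^{m_2}\beta^{\,n_3+n_2-n_1-m_2-2m_3}$ becomes a monomial in $\sqrt y$ and $q$. I would then perform the change of summation variables $s:=n_1+m_3$ and $k:=(n_1+n_2+n_3-2s)-m_2$, an invertible unipotent affine substitution; one checks that it carries the constraints $m_2,m_3\ge0$, $m_3\ge n_3-n_1$, $m_3\ge n_2-n_1$ of the $C$-sum exactly onto the ranges $\max(n_1,n_2,n_3)\le s\le(n_1+n_2+n_3)/2$ and $0\le k\le N$, where $N:=n_1+n_2+n_3-2s$. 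Rewriting each $(q;q)_\ell$ as $\ell!_q(1-q)^\ell$, the total power of $(1-q)$ carried by the Pochhammer symbols turns out to be $n_1+n_2-n_3$; this cancels the $(q-1)^{-(n_1+n_2-n_3)}$ hidden in $r^{n_1+n_2-n_3}$ up to a sign $(-1)^{n_1+n_2-n_3}$, which in turn combines with the prefactor $(-1)^{N_2+n_3}$ (here $N_2=n_1+n_2$) to give $+1$. The half-integer powers of $y$ produced by $\alpha^{m_2}$, the $\beta$-monomial, $r^{n_1+n_2-n_3}$ and $y^{n_3}$ collect to the integer exponent $s+k$, while the factor $q^k$ coming from $\beta^{\,n_3+n_2-n_1-m_2-2m_3}=(\sqrt y\,q)^k$ upgrades $q^{\binom{m_2}{2}+\binom{N-m_2}{2}}=q^{\binom{N-k}{2}+\binom{k}{2}}$ to $q^{\binom{k+1}{2}+\binom{N-k}{2}}$. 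After these reductions the surviving $q$-factorials are $n_1!_q\,n_2!_q\,n_3!_q\,s!_q$ upstairs and $(s-n_1)!_q(s-n_2)!_q(s-n_3)!_q\,k!_q\,(N-k)!_q$ downstairs; replacing $1/\bigl(k!_q(N-k)!_q\bigr)$ by ${N\brack k}_q/N!_q$ yields precisely the double sum of Theorem~\ref{lin3}.

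There is no conceptual difficulty here; the one delicate point is the bookkeeping in the final simplification. One must keep simultaneous track of the half-integer exponents of $y$, of the interplay of $(1-q)$ versus $(q-1)$ together with the attendant signs, and of the fact that the reindexing $(m_2,m_3)\mapsto(s,k)$ reproduces exactly the summation ranges appearing in the statement. Arranging for all of these to cancel at once is where the care is needed.
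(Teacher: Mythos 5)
Your proposal is correct and follows essentially the same route as the paper's proof: specialize Theorem~\ref{thm:alsalamlinear} at $\alpha=1/\sqrt{y}$, $\beta=\sqrt{y}\,q$, use orthogonality to get $I(n_1,n_2,n_3)={\mathcal L}_q\bigl(L_{n_3}(x;q)^2\bigr)\bigl(\tfrac{\sqrt{y}}{q-1}\bigr)^{n_1+n_2-n_3}C_{n_1,n_2}^{n_3}$, and reindex by $s=n_1+m_3$, $k=n_3+n_2-n_1-m_2-2m_3$. Your bookkeeping (net power $(1-q)^{n_1+n_2-n_3}$ cancelling the sign $(-1)^{N_2+n_3}$, exponent $y^{s+k}$, the upgrade of $\binom{k}{2}$ to $\binom{k+1}{2}$, and the summation ranges) all checks out.
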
\begin{proof}
By Theorem~\ref{thm:alsalamlinear} with $a=\frac{1}{\sqrt{y}}$ and
$b=\sqrt{y}q$ we have
\begin{align*}
%&{\mathcal L}_q(L_{n_1}(x;\,q)L_{n_2}(x;\,q)L_{n_3}(x;\,q))
I(n_{1}, n_{2}, n_{3})
&= {\mathcal L}_q(L_{n_3}(x;\,q)^2)\left(\frac{\sqrt{y}}{q-1}\right)^{n_1+n_2-n_3} C_{n_1,n_2}^{n_3}(a,b;\,q)\\
&=\sum_{m_2,m_3}\frac{\,n_1!_q
\,n_2!_q\,n_3!_q\,(n_1+m_3)!_q\,y^{n_2+n_3-m_2-m_3}\, q^{{m_2\choose
2}+{M+1\choose 2}}}{M!_q\,
m_2!_q\,(m_3+n_1-n_3)!_q\,(m_3+n_1-n_2)!_q\,m_3!_q\,},
\end{align*}
where $M=n_3+n_2-n_1-m_2-2m_3$.  Substituting  $s=n_1+m_3$ and $k=n_3+n_2-n_1-m_2-2m_3$ in the last
sum yields the desired formula.
\end{proof}

\begin{remark}
It would be interesting to give a combinatorial proof of the above
result as in \cite{ISV,KSZ}. When $q=1$ such a proof  was given in
\cite{Ze2}.
\end{remark}

We end this section with an example. If ${\bf n}=(2,2,1)$,  by
Theorem~\ref{lin3} we have
\begin{align}
I(2,2,1)&=
\sum_{s}\frac{2!_{q}2!_{q}1!_{q}s!_{q}y^s}{(5-2s)!_{q}(s-1)!_{q}(s-2)!_{q}(s-2)!_{q}}
%\nonumber \\
%&\qquad\qquad \times
\sum_{k\geq 0}{5-2s\brack k}_{q} y^kq^{{k+1\choose 2}+{5-2s-k\choose 2}}\nonumber \\
&=(1+q)^3(1+qy )y^2.\label{eq:check}
\end{align}

On the other  hand, the sixteen generalized derangements in $\D(2,2,1)$,  depicted by their
diagrams  and the corresponding weights are tabulated as follows:
%%%%%%%%%%%%%%%%%%%%%%%%%%%%%
%\newpage
%%%%%%%%%%%%%%{1}%%%%%%%%%%%%%%

$$
 {\setlength{\unitlength}{0.6mm}
\begin{picture}(50,20)(0,0)
\put(0,0){\circle*{1,3}}\put(0,0){\makebox(-2,-4)[c]{\small 1}}
\put(10,0){\circle*{1,3}}\put(10,0){\makebox(-2,-4)[c]{\small 2}}
\put(20,0){\circle*{1,3}}\put(20,0){\makebox(-2,-4)[c]{\small 3}}
\put(30,0){\circle*{1,3}}\put(30,0){\makebox(-2,-4)[c]{\small 4}}
\put(40,0){\circle*{1,3}}\put(40,0){\makebox(-2,-4)[c]{\small 5}}
\red{\qbezier(0,0)(20,20)(40,0) \qbezier(10,0)(20,15)(30,0)}
\put(0,0){\line(1,0){40}} \qbezier(0,0)(10,-10)(20,0)
\qbezier(10,0)(20,-10)(30,0) \qbezier(20,0)(30,-10)(40,0)
\end{picture}
}\hspace{1cm} \red{y^2}\blue{q^2} \hspace{2cm}
{\setlength{\unitlength}{0.6mm}
\begin{picture}(50,20)(0,0)
\put(0,0){\circle*{1,3}}\put(0,0){\makebox(-2,-4)[c]{\small 1}}
\put(10,0){\circle*{1,3}}\put(10,0){\makebox(-2,-4)[c]{\small 2}}
\put(20,0){\circle*{1,3}}\put(20,0){\makebox(-2,-4)[c]{\small 3}}
\put(30,0){\circle*{1,3}}\put(30,0){\makebox(-2,-4)[c]{\small 4}}
\put(40,0){\circle*{1,3}}\put(40,0){\makebox(-2,-4)[c]{\small 5}}
\red{\qbezier(0,0)(10,10)(20,0) \qbezier(20,0)(30,10)(40,0)
\qbezier(10,0)(20,10)(30,0)} \qbezier(0,0)(20,-20)(40,0)
\qbezier(10,0)(20,-10)(30,0)} \put(0,0){\line(1,0){40}
\end{picture}
}\hspace{1cm} \red{y^3}\blue{q^3}
$$
%%%%%%%%%%%%%%%%%%
$$
{\setlength{\unitlength}{0.6mm}
\begin{picture}(50,20)(0,0)
\put(0,0){\circle*{1,3}}\put(0,0){\makebox(-2,-4)[c]{\small 1}}
\put(10,0){\circle*{1,3}}\put(10,0){\makebox(-2,-4)[c]{\small 2}}
\put(20,0){\circle*{1,3}}\put(20,0){\makebox(-2,-4)[c]{\small 3}}
\put(30,0){\circle*{1,3}}\put(30,0){\makebox(-2,-4)[c]{\small 4}}
\put(40,0){\circle*{1,3}}\put(40,0){\makebox(-2,-4)[c]{\small 5}}
\red{\qbezier(0,0)(15,15)(30,0) \qbezier(10,0)(15,5)(20,0)
\qbezier(30,0)(35,5)(40,0)} \put(0,0){\line(1,0){40}}
\qbezier(0,0)(20,-20)(40,0) \qbezier(10,0)(15,-5)(20,0)
\end{picture}
}\hspace{1cm} \red{y^3}\blue{q} \hspace{2cm}
{\setlength{\unitlength}{0.6mm}
\begin{picture}(50,20)(0,0)
\put(0,0){\circle*{1,3}}\put(0,0){\makebox(-2,-4)[c]{\small 1}}
\put(10,0){\circle*{1,3}}\put(10,0){\makebox(-2,-4)[c]{\small 2}}
\put(20,0){\circle*{1,3}}\put(20,0){\makebox(-2,-4)[c]{\small 3}}
\put(30,0){\circle*{1,3}}\put(30,0){\makebox(-2,-4)[c]{\small 4}}
\put(40,0){\circle*{1,3}}\put(40,0){\makebox(-2,-4)[c]{\small 5}}
\red{\qbezier(0,0)(20,10)(40,0) \qbezier(10,0)(15,5)(20,0)}
\put(0,0){\line(1,0){40}}
%%%%%%%%%%%%%%
\qbezier(0,0)(15,-15)(30,0) \qbezier(10,0)(15,-5)(20,0)
\qbezier(0,0)(15,-15)(30,0) \qbezier(30,0)(35,-5)(40,0)
\end{picture}
}\hspace{1cm} \red{y^2}
$$
%%%%%%%%%%%%%%%%%%%%
$$
{\setlength{\unitlength}{0.6mm}
\begin{picture}(50,20)(0,0)
\put(0,0){\circle*{1,3}}\put(0,0){\makebox(-2,-4)[c]{\small 1}}
\put(10,0){\circle*{1,3}}\put(10,0){\makebox(-2,-4)[c]{\small 2}}
\put(20,0){\circle*{1,3}}\put(20,0){\makebox(-2,-4)[c]{\small 3}}
\put(30,0){\circle*{1,3}}\put(30,0){\makebox(-2,-4)[c]{\small 4}}
\put(40,0){\circle*{1,3}}\put(40,0){\makebox(-2,-4)[c]{\small 5}}
\red{\qbezier(0,0)(15,15)(30,0) \qbezier(10,0)(25,15)(40,0) }
\put(0,0){\line(1,0){40}}
%%%%%%%%%%%%%%
\qbezier(0,0)(15,-15)(30,0) \qbezier(10,0)(15,-5)(20,0)
\qbezier(20,0)(30,-10)(40,0)
\end{picture}
}\hspace{1cm} \red{y^2}\blue{q^2} \hspace{2cm}
{\setlength{\unitlength}{0.6mm}
\begin{picture}(50,20)(0,0)
\put(0,0){\circle*{1,3}}\put(0,0){\makebox(-2,-4)[c]{\small 1}}
\put(10,0){\circle*{1,3}}\put(10,0){\makebox(-2,-4)[c]{\small 2}}
\put(20,0){\circle*{1,3}}\put(20,0){\makebox(-2,-4)[c]{\small 3}}
\put(30,0){\circle*{1,3}}\put(30,0){\makebox(-2,-4)[c]{\small 4}}
\put(40,0){\circle*{1,3}}\put(40,0){\makebox(-2,-4)[c]{\small 5}}
\red{\qbezier(0,0)(15,15)(30,0) \qbezier(10,0)(15,5)(20,0)
\qbezier(20,0)(30,15)(40,0) } \put(0,0){\line(1,0){40}}
%%%%%%%%%%%%%%
\qbezier(0,0)(15,-15)(30,0) \qbezier(10,0)(30,-15)(40,0)
\end{picture}
}\hspace{1cm} \red{y^3}\blue{q^3}
$$
%%%%%%%%%%%%%%%%%%%%%%%%%%%%%%%
$$
{\setlength{\unitlength}{0.6mm}
\begin{picture}(50,20)(0,0)
\put(0,0){\circle*{1,3}}\put(0,0){\makebox(-2,-4)[c]{\small 1}}
\put(10,0){\circle*{1,3}}\put(10,0){\makebox(-2,-4)[c]{\small 2}}
\put(20,0){\circle*{1,3}}\put(20,0){\makebox(-2,-4)[c]{\small 3}}
\put(30,0){\circle*{1,3}}\put(30,0){\makebox(-2,-4)[c]{\small 4}}
\put(40,0){\circle*{1,3}}\put(40,0){\makebox(-2,-4)[c]{\small 5}}
\red{\qbezier(0,0)(10,10)(20,0) \qbezier(10,0)(20,10)(30,0)
\qbezier(30,0)(35,5)(40,0) } \put(0,0){\line(1,0){40}}
%%%%%%%%%%%%%%
\qbezier(0,0)(10,-10)(20,0) \qbezier(10,0)(30,-15)(40,0)
\end{picture}
}\hspace{1cm} \red{y^3}\blue{q^3} \hspace{2cm}
{\setlength{\unitlength}{0.6mm}
\begin{picture}(50,20)(0,0)
\put(0,0){\circle*{1,3}}\put(1,0){\makebox(-2,-4)[c]{\small 1}}
\put(10,0){\circle*{1,3}}\put(11,0){\makebox(-2,-4)[c]{\small 2}}
\put(20,0){\circle*{1,3}}\put(21,0){\makebox(-2,-4)[c]{\small 3}}
\put(30,0){\circle*{1,3}}\put(31,0){\makebox(-2,-4)[c]{\small 4}}
\put(40,0){\circle*{1,3}}\put(41,0){\makebox(-2,-4)[c]{\small 5}}
\red{\qbezier(0,0)(10,10)(20,0) \qbezier(10,0)(25,10)(40,0) }
\put(0,0){\line(1,0){40}}
%%%%%%%%%%%%%%
\qbezier(0,0)(10,-10)(20,0) \qbezier(10,0)(20,-10)(30,0)
\qbezier(30,0)(35,-5)(40,0)
\end{picture}
}\hspace{1cm} \red{y^2}\blue{q^2}
$$
%%%%%%%%%%%%%%%%%%%
$$
{\setlength{\unitlength}{0.6mm}
\begin{picture}(50,20)(0,0)
\put(0,0){\circle*{1,3}}\put(0,0){\makebox(-2,-4)[c]{\small 1}}
\put(10,0){\circle*{1,3}}\put(10,0){\makebox(-2,-4)[c]{\small 2}}
\put(20,0){\circle*{1,3}}\put(20,0){\makebox(-2,-4)[c]{\small 3}}
\put(30,0){\circle*{1,3}}\put(30,0){\makebox(-2,-4)[c]{\small 4}}
\put(40,0){\circle*{1,3}}\put(40,0){\makebox(-2,-4)[c]{\small 5}}
\red{\qbezier(0,0)(10,10)(20,0) \qbezier(10,0)(20,10)(30,0)
\qbezier(30,0)(35,5)(40,0) } \put(0,0){\line(1,0){40}}
%%%%%%%%%%%%%%
\qbezier(0,0)(20,-20)(40,0) \qbezier(10,0)(15,-5)(20,0)
\end{picture}
}\hspace{1cm} \red{y^3}\blue{q^2} \hspace{2cm}
 {\setlength{\unitlength}{0.6mm}
\begin{picture}(50,20)(0,0)
\put(0,0){\circle*{1,3}}\put(0,0){\makebox(-2,-4)[c]{\small 1}}
\put(10,0){\circle*{1,3}}\put(10,0){\makebox(-2,-4)[c]{\small 2}}
\put(20,0){\circle*{1,3}}\put(20,0){\makebox(-2,-4)[c]{\small 3}}
\put(30,0){\circle*{1,3}}\put(30,0){\makebox(-2,-4)[c]{\small 4}}
\put(40,0){\circle*{1,3}}\put(40,0){\makebox(-2,-4)[c]{\small 5}}
\red{\qbezier(0,0)(10,10)(20,0) \qbezier(10,0)(25,15)(40,0)}
\put(0,0){\line(1,0){40}}
%%%%%%%%%%%%%%
\qbezier(0,0)(15,-15)(30,0) \qbezier(10,0)(15,-5)(20,0)
\qbezier(30,0)(35,-5)(40,0)
\end{picture}
}\hspace{1cm} \red{y^2}\blue{q}
$$
%%%%%%%%%%%%%%%%%%%
$$
 {\setlength{\unitlength}{0.6mm}
\begin{picture}(50,20)(0,0)
\put(0,0){\circle*{1,3}}\put(0,0){\makebox(-2,-4)[c]{\small 1}}
\put(10,0){\circle*{1,3}}\put(10,0){\makebox(-2,-4)[c]{\small 2}}
\put(20,0){\circle*{1,3}}\put(20,0){\makebox(-2,-4)[c]{\small 3}}
\put(30,0){\circle*{1,3}}\put(30,0){\makebox(-2,-4)[c]{\small 4}}
\put(40,0){\circle*{1,3}}\put(40,0){\makebox(-2,-4)[c]{\small 5}}
\red{\qbezier(0,0)(10,10)(20,0) \qbezier(10,0)(20,10)(30,0)
\qbezier(20,0)(30,10)(40,0) } \put(0,0){\line(1,0){40}}
%%%%%%%%%%%%%%
\qbezier(0,0)(15,-15)(30,0) \qbezier(10,0)(30,-15)(40,0)
\end{picture}
}\hspace{1cm} \red{y^3}\blue{q^4} \hspace{2cm}
{\setlength{\unitlength}{0.6mm}
\begin{picture}(50,20)(0,0)
\put(0,0){\circle*{1,3}}\put(0,0){\makebox(-2,-4)[c]{\small 1}}
\put(10,0){\circle*{1,3}}\put(10,0){\makebox(-2,-4)[c]{\small 2}}
\put(20,0){\circle*{1,3}}\put(20,0){\makebox(-2,-4)[c]{\small 3}}
\put(30,0){\circle*{1,3}}\put(30,0){\makebox(-2,-4)[c]{\small 4}}
\put(40,0){\circle*{1,3}}\put(40,0){\makebox(-2,-4)[c]{\small 5}}
\red{\qbezier(0,0)(15,15)(30,0) \qbezier(10,0)(15,5)(20,0)
\qbezier(20,0)(30,10)(40,0) } \put(0,0){\line(1,0){40}}
%%%%%%%%%%%%%%
\qbezier(0,0)(20,-20)(40,0) \qbezier(10,0)(20,-10)(30,0)
\end{picture}
}\hspace{1cm} \red{y^3}\blue{q^2}
$$
%%%%%%%%%%%%%%%%%%%
$$
{\setlength{\unitlength}{0.6mm}
\begin{picture}(50,20)(0,0)
\put(0,0){\circle*{1,3}}\put(0,0){\makebox(-2,-4)[c]{\small 1}}
\put(10,0){\circle*{1,3}}\put(10,0){\makebox(-2,-4)[c]{\small 2}}
\put(20,0){\circle*{1,3}}\put(20,0){\makebox(-2,-4)[c]{\small 3}}
\put(30,0){\circle*{1,3}}\put(30,0){\makebox(-2,-4)[c]{\small 4}}
\put(40,0){\circle*{1,3}}\put(40,0){\makebox(-2,-4)[c]{\small 5}}
\red{\qbezier(0,0)(15,15)(30,0) \qbezier(10,0)(25,15)(40,0)}
\put(0,0){\line(1,0){40}}
%%%%%%%%%%%%%%
\qbezier(0,0)(10,-10)(20,0) \qbezier(10,0)(20,-10)(30,0)
\qbezier(20,0)(30,-10)(40,0)
\end{picture}
}\hspace{1cm} \red{y^2}\blue{q^3} \hspace{2cm}
 {\setlength{\unitlength}{0.6mm}
\begin{picture}(50,20)(0,0)
\put(0,0){\circle*{1,3}}\put(0,0){\makebox(-2,-4)[c]{\small 1}}
\put(10,0){\circle*{1,3}}\put(10,0){\makebox(-2,-4)[c]{\small 2}}
\put(20,0){\circle*{1,3}}\put(20,0){\makebox(-2,-4)[c]{\small 3}}
\put(30,0){\circle*{1,3}}\put(30,0){\makebox(-2,-4)[c]{\small 4}}
\put(40,0){\circle*{1,3}}\put(40,0){\makebox(-2,-4)[c]{\small 5}}
\red{\qbezier(0,0)(15,15)(30,0) \qbezier(10,0)(15,5)(20,0)
\qbezier(30,0)(35,5)(40,0)} \put(0,0){\line(1,0){40}}
%%%%%%%%%%%%%%%%%%%%%%%%%%%%%%%%
\qbezier(0,0)(10,-10)(20,0) \qbezier(10,0)(25,-15)(40,0)
\end{picture}
}\hspace{1cm} \red{y^3}\blue{q^2}
$$
%%%%%%%%%%%%%%%%%%%
$$
{\setlength{\unitlength}{0.6mm}
\begin{picture}(50,20)(0,0)
\put(0,0){\circle*{1,3}}\put(0,0){\makebox(-2,-4)[c]{\small 1}}
\put(10,0){\circle*{1,3}}\put(10,0){\makebox(-2,-4)[c]{\small 2}}
\put(20,0){\circle*{1,3}}\put(20,0){\makebox(-2,-4)[c]{\small 3}}
\put(30,0){\circle*{1,3}}\put(30,0){\makebox(-2,-4)[c]{\small 4}}
\put(40,0){\circle*{1,3}}\put(40,0){\makebox(-2,-4)[c]{\small 5}}
\red{\qbezier(0,0)(20,20)(40,0) \qbezier(10,0)(20,10)(30,0)}
\put(0,0){\line(1,0){40}}
%%%%%%%%%%%%%%
\qbezier(0,0)(15,-15)(30,0) \qbezier(10,0)(15,-5)(20,0)
\qbezier(20,0)(30,-10)(40,0)
\end{picture}
}\hspace{1cm} \red{y^2}\blue{q} \hspace{2cm}
{\setlength{\unitlength}{0.6mm}
\begin{picture}(50,20)(0,0)
\put(0,0){\circle*{1,3}}\put(0,0){\makebox(-2,-4)[c]{\small 1}}
\put(10,0){\circle*{1,3}}\put(10,0){\makebox(-2,-4)[c]{\small 2}}
\put(20,0){\circle*{1,3}}\put(20,0){\makebox(-2,-4)[c]{\small 3}}
\put(30,0){\circle*{1,3}}\put(30,0){\makebox(-2,-4)[c]{\small 4}}
\put(40,0){\circle*{1,3}}\put(40,0){\makebox(-2,-4)[c]{\small 5}}
\red{\qbezier(0,0)(20,20)(40,0) \qbezier(10,0)(15,5)(20,0)}
\put(0,0){\line(1,0){40}}
%%%%%%%%%%%%%%
\qbezier(0,0)(10,-10)(20,0) \qbezier(10,0)(20,-10)(30,0)
\qbezier(30,0)(35,-5)(40,0)
\end{picture}
}\hspace{1cm} \red{y^2}\blue{q}
$$
\medskip

Summing up we get $\sum_{\sigma \in \D(2,2,1)} y^{\exc \sigma}
q^{\cro\sigma}= \red{y^2}(1+\blue{q} \red{y}) (1+\blue{q})^3$, which
coincides with~\eqref{eq:check}.

%%%%****%%%%****%%%%****%%%%****%%%%****%%%%****%%%%****%%%%*
%%%%
%%%% New Section
%%%%
%%%%****%%%%****%%%%****%%%%****%%%%****%%%%****%%%%****%%%%*
\section{Proof of  Lemma~\ref{lem:1}}
For each fixed $k\in [n]$ define the two subsets of $\S_n$:
\begin{align*}
{^k\S_{n}}&= \{\sigma\,\in S_{n}\,|\, \sigma(i)>k\quad \text{for $1\leq i\leq k$} \}, \\
\S_{n}^k&=\{\sigma\,\in S_{n}\,|\, \sigma(n+1-i)< n+1-k \quad
\text{for $1\leq i\leq k$} \}.
\end{align*}
We first  define  a simple bijection $\Phi_k: \sigma\mapsto \sigma'$
from $ {^k\S_{n}}$ to $\S_{n}^k$ as follows:  for $1\leq i\leq n$,
$$
\sigma'(i)=\left\{%
\begin{array}{ll}
    \sigma(i+k)-k, & \hbox{if $1\leq i\leq n-k$ and $\sigma(i+k)>k$;} \\
    \sigma(i+k)+n-k, & \hbox{if $1\leq i\leq n-k$ and $\sigma(i+k)\leq k$;} \\
\sigma(i+k-n)-k,& \hbox{if $n-k+1\leq i\leq n$.} \\
\end{array}%
\right.
$$
The map  is  illustrated  by  the diagrams of permutations  in Table~1.
\begin{table}[h]
$$
\begin{array}{|cccc|}
\hline
& \sigma &\longrightarrow &\hspace{1cm} \sigma'\\
\hline
%%%%%%%%%%%%%%%%%%%%%%%%%%%%%%%%%%%%%%%%%%%%%%%%%%%%%%%%% type 1
&\hspace{1cm} {\setlength{\unitlength}{0.6mm}
\begin{picture}(80,15)(0,-5)
\put(0,0){\line(1,0){25}}\put(35,0){\line(1,0){35}}
\put(0,0){\circle*{1,3}}\put(0,0){\makebox(-2,-6)[c]{\tiny $1$}}
\put(25,0){\circle*{1,3}}\put(25,0){\makebox(-2,-6)[c]{\tiny $k$}}
\put(35,0){\circle*{1,3}}\put(35,0){\makebox(-5,-6)[c]{\tiny $k+1$}}
\put(70,0){\circle*{1,3}}\put(70,0){\makebox(-2,-6)[c]{\tiny $n$}}
\qbezier(45,0)(50,10)(55,0)\put(45,0){\makebox(-2,-6)[c]{\tiny $i$}}
\put(55,0){\makebox(-2,-6)[c]{\tiny $\sigma(i)$}}
\end{picture}
}&\longrightarrow&\hspace{1cm}{\setlength{\unitlength}{0.6mm}
\begin{picture}(80,15)(0,-5)
\put(0,0){\line(1,0){35}}\put(45,0){\line(1,0){25}}
\put(0,0){\circle*{1,3}}\put(0,0){\makebox(-2,-6)[c]{\tiny $1$}}
\put(35,0){\circle*{1,3}}\put(35,0){\makebox(2,-6)[c]{\tiny
$n\!-\!k$}} \put(45,0){\circle*{1,3}}
\put(70,0){\circle*{1,3}}\put(70,0){\makebox(-2,-6)[c]{\tiny $n$}}
\qbezier(8,0)(15,10)(22,0) \put(10,0){\makebox(-5,-6)[c]{\tiny
$i\!-\!k$}} \put(20,0){\makebox(2,-6)[c]{\tiny $\sigma(i)\!-\!k$}}
\end{picture}
}\\
%%%%%%%%%%%%%%%%%%%%%%%%%%%%%%%%%%%%%%%%%%%%%%%%%%%%%%%%% type 2
&\hspace{1cm} {\setlength{\unitlength}{0.6mm}
\begin{picture}(80,15)(0,-5)
\put(0,0){\line(1,0){25}}\put(35,0){\line(1,0){35}}
\put(0,0){\circle*{1,3}}\put(0,0){\makebox(-2,-6)[c]{\tiny $1$}}
\put(25,0){\circle*{1,3}}\put(25,0){\makebox(-2,-6)[c]{\tiny $k$}}
\put(35,0){\circle*{1,3}}
\put(70,0){\circle*{1,3}}\put(70,0){\makebox(-2,-6)[c]{\tiny $n$}}
\qbezier(45,0)(50,-10)(55,0)\put(45,0){\makebox(-2,6)[c]{\tiny
$\sigma(i)$}} \put(55,0){\makebox(-2,6)[c]{\tiny $i$}}
\end{picture}
}&\longrightarrow&\hspace{1cm}{\setlength{\unitlength}{0.6mm}
\begin{picture}(80,15)(0,-5)
\put(0,0){\line(1,0){35}}\put(45,0){\line(1,0){25}}
\put(0,0){\circle*{1,3}}\put(0,0){\makebox(-2,-6)[c]{\tiny $1$}}
\put(35,0){\circle*{1,3}}\put(35,0){\makebox(2,-6)[c]{\tiny
$n\!-\!k$}} \put(45,0){\circle*{1,3}}
\put(70,0){\circle*{1,3}}\put(70,0){\makebox(-2,-6)[c]{\tiny $n$}}
\qbezier(8,0)(15,-10)(22,0) \put(10,0){\makebox(-2,6)[c]{\tiny
$\sigma(i)\!-\!k$}} \put(20,0){\makebox(5,6)[c]{\tiny $i\!-\!k$}}
\end{picture}
}\\
%%%%%%%%%%%%%%%%%%%%%%%%%%%%%%%%%%%%%%%%%%%%%%%%%%%%%%%%type 4
&\hspace{1cm} {\setlength{\unitlength}{0.6mm}
\begin{picture}(80,15)(0,-5)
\put(0,0){\line(1,0){25}}\put(35,0){\line(1,0){35}}
\put(0,0){\circle*{1,3}}\put(0,0){\makebox(-2,-6)[c]{\tiny $1$}}
\put(25,0){\circle*{1,3}}\put(25,0){\makebox(-2,-6)[c]{\tiny $k$}}
\put(35,0){\circle*{1,3}}
\put(70,0){\circle*{1,3}}\put(70,0){\makebox(-2,-6)[c]{\tiny $n$}}
\qbezier(10,0)(32,10)(55,0)\put(10,0){\makebox(0,-6)[c]{\tiny $i$}}
\put(55,0){\makebox(2,-6)[c]{\tiny $\sigma(i)$}}
\end{picture}
}&\longrightarrow&\hspace{1cm}{\setlength{\unitlength}{0.6mm}
\begin{picture}(80,15)(0,-5)
\put(0,0){\line(1,0){35}}\put(45,0){\line(1,0){25}}
\put(0,0){\circle*{1,3}}\put(0,0){\makebox(-2,-6)[c]{\tiny $1$}}
\put(35,0){\circle*{1,3}}\put(35,0){\makebox(2,6)[c]{\tiny
$n\!-\!k$}} \put(45,0){\circle*{1,3}}
\put(70,0){\circle*{1,3}}%\put(70,0){\makebox(-2,-6)[c]{\tiny $n$}}
\qbezier(15,0)(37,-10)(60,0)\put(15,0){\makebox(0,6)[c]{\tiny
$\sigma(i)\!-\!k$}} \put(55,0){\makebox(6,6)[c]{\tiny
$n\!-k\!+\!i$}}
\end{picture}
}\\
%%%%%%%%%%%%%%%%%%%%%%%%%%%%%%%%%%%%%%%%%%%%%%%%%%%%%%%%%type 3
&\hspace{1cm}{\setlength{\unitlength}{0.6mm}
\begin{picture}(80,15)(0,-5)
\put(0,0){\line(1,0){25}}\put(35,0){\line(1,0){35}}
\put(0,0){\circle*{1,3}}\put(0,0){\makebox(-2,-6)[c]{\tiny $1$}}
\put(25,0){\circle*{1,3}}\put(25,0){\makebox(-2,6)[c]{\tiny $k$}}
\put(35,0){\circle*{1,3}}
\put(70,0){\circle*{1,3}}\put(70,0){\makebox(-2,-6)[c]{\tiny $n$}}
\qbezier(10,0)(32,-10)(55,0)\put(10,0){\makebox(0,6)[c]{\tiny
$\sigma(i)$}} \put(55,0){\makebox(2,6)[c]{\tiny $i$}}
\end{picture}
}&\longrightarrow&\hspace{1cm}{\setlength{\unitlength}{0.6mm}
\begin{picture}(80,15)(0,-5)
\put(0,0){\line(1,0){35}}\put(45,0){\line(1,0){25}}
\put(0,0){\circle*{1,3}}\put(0,0){\makebox(-2,-6)[c]{\tiny $1$}}
\put(35,0){\circle*{1,3}}\put(35,0){\makebox(2,-6)[c]{\tiny
$n\!-\!k$}} \put(45,0){\circle*{1,3}}
\put(70,0){\circle*{1,3}}%\put(70,0){\makebox(-2,-6)[c]{\tiny $n$}}
\qbezier(15,0)(37,10)(60,0)\put(15,0){\makebox(0,-6)[c]{\tiny
$i\!-\!k$}} \put(55,0){\makebox(6,-6)[c]{\tiny
$n\!-k\!+\!\sigma(i)$}}
\end{picture}
}\\[0.5cm]
\hline

\end{array}
$$
\caption{The mapping $\Phi_k: \sigma\mapsto \sigma'$.}
\end{table}

For  example, consider the permutation $\sigma\in{^3\S_{15}}$,
whose diagram is given below.\\
\begin{center}
{\setlength{\unitlength}{0.6mm}
\begin{picture}(180,35)(0,40)
\put(-5,60){\line(1,0){20}} \put(30,60){\line(1,0){110}}
%GRAPHES DE SIGMA
\put(-5,60){\circle*{1}}\put(-5,60){\makebox(-2,-4)[c]{\tiny 1}}
\put(5,60){\circle*{1}}\put(5,60){\makebox(-2,-4)[c]{\tiny 2}}
\put(15,60){\circle*{1}}\put(15,60){\makebox(-2,-4)[c]{\tiny 3}}
\put(30,60){\circle*{1}}\put(30,60){\makebox(-2,-4)[c]{\tiny 4}}
\put(40,60){\circle*{1}}\put(40,60){\makebox(-2,-4)[c]{\tiny 5}}
\put(50,60){\circle*{1}}\put(50,60){\makebox(-2,-4)[c]{\tiny 6}}
\put(60,60){\circle*{1}}\put(60,60){\makebox(-2,-4)[c]{\tiny 7}}
\put(70,60){\circle*{1}}\put(70,60){\makebox(-2,-4)[c]{\tiny 8}}
\put(80,60){\circle*{1}}\put(80,60){\makebox(-2,-4)[c]{\tiny 9}}
\put(90,60){\circle*{1}}\put(90,60){\makebox(-2,-4)[c]{\tiny 10}}
\put(100,60){\circle*{1}}\put(100,60){\makebox(-2,-4)[c]{\tiny
11}}\put(110,60){\circle*{1}}\put(110,60){\makebox(-2,-4)[c]{\tiny
12}}\put(120,60){\circle*{1}}\put(120,60){\makebox(-2,-4)[c]{\tiny
13}}\put(130,60){\circle*{1}}\put(130,60){\makebox(-2,-4)[c]{\tiny
14}}\put(140,60){\circle*{1}}\put(140,60){\makebox(-2,-4)[c]{\tiny
15}}
%EXCEDANCES DE SIGMA
\red{ \qbezier(-5,60)(22.5,75)(50,60)
\qbezier(5,60)(32.5,85)(60,60)\qbezier(15,60)(75,95)(140,60)}
\qbezier(30,60)(50,82)(70,60) \qbezier(40,60)(70,80)(100,60)
\qbezier(50,60)(75,75)(90,60)\qbezier(60,60)(90,80)(120,60)
\qbezier(70,60)(100,83)(130,60) \qbezier(100,60)(105,68)(110,60)
%NONEXCEDANCES DE SIGMA
\blue{ \qbezier(-5,60)(46,40)(80,60)
\qbezier(5,60)(72.5,23)(140,60)\qbezier(15,60)(67.5,30)(120,60)}
\qbezier(30,60)(60,45)(90,60) \qbezier(40,60)(65,37)(110,60)
\qbezier(80,60)(105,45)(130,60)
\end{picture}
}
\end{center}
Then the diagram of $\Phi_3(\sigma)$ is given by
\begin{center}
{\setlength{\unitlength}{0.6mm}
\begin{picture}(180,30)(0,-5)
\put(30,10){\line(1,0){110}}\put(155,10){\line(1,0){20}}
%GRAPHES DE TAU SIGMA
\put(30,10){\circle*{1}}\put(30,10){\makebox(-2,-4)[c]{\tiny 1}}
\put(40,10){\circle*{1}}\put(40,10){\makebox(-2,-4)[c]{\tiny 2}}
\put(50,10){\circle*{1}}\put(50,10){\makebox(-2,-4)[c]{\tiny 3}}
\put(60,10){\circle*{1}}\put(60,10){\makebox(-2,-4)[c]{\tiny 4}}
\put(70,10){\circle*{1}}\put(70,10){\makebox(-2,-4)[c]{\tiny 5}}
\put(80,10){\circle*{1}}\put(80,10){\makebox(-2,-4)[c]{\tiny 6}}
\put(90,10){\circle*{1}}\put(90,10){\makebox(-2,-4)[c]{\tiny 7}}
\put(100,10){\circle*{1}}\put(100,10){\makebox(-2,-4)[c]{\tiny
8}}\put(110,10){\circle*{1}}\put(110,10){\makebox(-2,-4)[c]{\tiny
9}}\put(120,10){\circle*{1}}\put(120,10){\makebox(-2,-4)[c]{\tiny
10}}\put(130,10){\circle*{1}}\put(130,10){\makebox(-2,-4)[c]{\tiny
11}}\put(140,10){\circle*{1}}\put(140,10){\makebox(-2,-4)[c]{\tiny
12}}\put(155,10){\circle*{1}}\put(155,10){\makebox(-2,-4)[c]{\tiny
13}} \put(165,10){\circle*{1}}\put(165,10){\makebox(-2,-4)[c]{\tiny
14}} \put(175,10){\circle*{1}}\put(175,10){\makebox(-2,-4)[c]{\tiny
15}}
%EXCEDANCES DE TAU SIGMA
\blue{\qbezier(80,10)(117.5,35)(155,10)\qbezier(120,10)(142.5,28)(175,10)
\qbezier(140,10)(152.5,20)(165,10)} \qbezier(40,10)(70,35)(100,10)
\qbezier(30,10)(50,25)(70,10)
\qbezier(50,10)(75,27)(90,10)\qbezier(60,10)(90,38)(120,10)
\qbezier(70,10)(100,28)(130,10)\qbezier(100,10)(105,18)(110,10)
%NONEXCEDANCES DE TAU SIGMA
\red{\qbezier(50,10)(80,-15)(155,10)
\qbezier(60,10)(130,-20)(165,10)\qbezier(140,10)(157.5,-5)(175,10)}
\qbezier(30,10)(60,-5)(90,10) \qbezier(40,10)(65,-14)(110,10)
\qbezier(80,10)(105,-5)(130,10)
\end{picture}
}
\end{center}

The main properties of $\Phi_k$ are summarized in following proposition.
\begin{prop}\label{prop:Phi}
For each positive integer $k\in [n]$, the map $\Phi_k: {}^k\S_{n}\to
\S_{n}^k$ is a bijection such that for any $\sigma\in {}^k\S_{n}$
there holds
\begin{align}\label{eq:propPhi}
(\exc, \cro)\Phi_k(\sigma)=(\exc,\cro)\sigma.
\end{align}
\end{prop}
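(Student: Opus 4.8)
The plan is to verify the three assertions of the proposition in order: that $\Phi_k$ maps ${}^k\S_n$ into $\S_n^k$, that it is a bijection, and that it preserves the pair of statistics $(\wex,\cro)$. The cleanest way to organise the argument is to think of $\Phi_k$ diagrammatically, exactly as suggested by Table~1: the permutation diagram of $\sigma$ lives on the linearly ordered vertex set $\{1,\dots,n\}$, and $\Phi_k$ amounts to \emph{cutting off} the first $k$ vertices and gluing them back at the right-hand end, after which the vertices are relabelled $1,\dots,n$ from left to right. Under this geometric description the three cases in the definition of $\sigma'$ become: an arc lying entirely in $\{k+1,\dots,n\}$ is simply translated left by $k$; an arc incident to one of the first $k$ vertices has that endpoint moved to the far right (adding $n-k$); and the ordering of the two endpoints of such an arc is inherited from the cyclic picture. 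First I would record this ``cut-and-paste'' reformulation carefully, and check that $\sigma'$ so defined is indeed a permutation of $[n]$.

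Next I would prove that $\Phi_k(\sigma)\in\S_n^k$. Since $\sigma\in{}^k\S_n$ means $\sigma(i)>k$ for $1\le i\le k$, the $k$ arcs leaving the first $k$ vertices all go to vertices in $\{k+1,\dots,n\}$; after the cut-and-paste these $k$ vertices become the last $k$ vertices $n-k+1,\dots,n$ of the new diagram, and each of the $k$ arcs now ends at one of the old positions $\sigma(i)-k\in\{1,\dots,n-k\}$, i.e. strictly to the left of position $n-k+1$. Reading this in permutation language gives precisely $\sigma'(n+1-j)<n+1-k$ for $1\le j\le k$, which is the defining condition of $\S_n^k$. For bijectivity I would exhibit the inverse, which is the symmetric cut-and-paste moving the last $k$ vertices of a permutation in $\S_n^k$ back to the front; one checks that the conditions defining ${}^k\S_n$ and $\S_n^k$ are exchanged by this operation and that the two maps compose to the identity. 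Alternatively, since $|{}^k\S_n|=|\S_n^k|$ (both equal $k!\,(n-k)!\cdot\binom{?}{?}$-type counts — more simply, both are in obvious bijection with pairs (a function from a $k$-set to an $(n-k)$-set chosen injectively into the complement, plus a permutation of the rest)), injectivity alone suffices; but writing the explicit inverse is cleaner and reusable.

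The substantive point, and the step I expect to be the main obstacle, is the statistic-preservation identity \eqref{eq:propPhi}. For $\wex$ this is easy: cut-and-paste does not change whether an arc is drawn above or below the line, nor does it change the number of isolated fixed points, and $\wex(\sigma)$ counts arcs above the line plus fixed points. For $\cro$ one must check that no crossing is created or destroyed. The potential trouble is with a pair of arcs, at least one of which is incident to the first $k$ vertices, since such arcs get ``wrapped around'' the end of the line; a crossing of two nested or disjoint arcs in the linear picture could a priori turn into a non-crossing (or vice versa) after the wrap. The resolution is that the relevant notion is really a \emph{circular} one: place the $n$ vertices on a circle, and $\cro$ counts pairs of chords that cross in a specified orientation-sensitive way. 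Concretely I would argue that, because $\sigma\in{}^k\S_n$, every arc touching one of the first $k$ vertices is an \emph{upper} arc going rightward into $\{k+1,\dots,n\}$, and after the transfer it becomes an upper arc coming from the far right back into $\{1,\dots,n-k\}$; a careful case analysis — splitting on whether the second arc of the pair is upper or lower, whether its endpoints straddle the cut point $k\mid k+1$, etc. — shows that the ``crosses or touches above / crosses below'' condition is invariant. This bookkeeping is routine but must be done completely, distinguishing the handful of cases according to which of the four rows of Table~1 each of the two arcs falls into; the figures for $n=15$, $k=3$ above are meant precisely to let the reader check a representative instance. Once \eqref{eq:propPhi} is established for all pairs of arcs, summing gives $\cro\,\Phi_k(\sigma)=\cro\,\sigma$, completing the proof. $\qed$
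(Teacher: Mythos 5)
The easy parts of your plan (that $\Phi_k(\sigma)\in\S_n^k$, and bijectivity via the inverse cut-and-paste) are fine and match the paper, which dismisses them in one line. But the core of your argument for $\cro$ — that a case analysis shows each pair of arcs keeps its crossing status — is false, and so is the premise of your $\wex$ argument that the cut-and-paste ``does not change whether an arc is drawn above or below the line.'' Every arc leaving a vertex of $[1,k]$ is an upper arc of $\sigma$ but becomes a lower arc of $\sigma'=\Phi_k(\sigma)$ (its left endpoint is moved to the far right), and the $k$ arcs of $\sigma$ that land in $[1,k]$ flip from lower to upper; $\wex$ is preserved only because these two families both have cardinality $k$, not because sides are unchanged. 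For $\cro$ the failure is more serious: individual crossings are created and destroyed. A concrete example with $n=4$, $k=1$: $\sigma=4\,3\,1\,2$, so $\sigma'=\Phi_1(\sigma)=2\,4\,1\,3$. The unique crossing of $\sigma$ is the lower pair of arcs $3\to1$ and $4\to2$; their images in $\sigma'$ are $2\to4$ (upper) and $3\to1$ (lower), which do not cross. The unique crossing of $\sigma'$ is the upper pair $1\to2$, $2\to4$, whose preimage in $\sigma$ is the non-crossing mixed pair $2\to3$ (upper), $3\to1$ (lower). So only the total number of crossings is invariant, not the crossing status of each pair, and your planned pairwise case analysis (as well as the appeal to a ``circular'' invariance of $\cro$) cannot close the argument.

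What is actually needed, and what the paper does, is a global count: crossings of $\sigma$ in which neither arc is wrapped do correspond one-to-one to crossings of $\sigma'$ (this is where your pairwise argument is valid; these are the classes $L_1,L_2$ versus $R_1,R_2$), but the crossings of $\sigma$ involving a wrapped arc form a set $L_3(\sigma)$, while the crossings of $\sigma'$ involving a wrapped arc correspond to a different set $L_4(\sigma)$ of mixed upper/lower, non-crossing configurations of $\sigma$. The substance of the proof is the identity $|L_3(\sigma)|=|L_4(\sigma)|$ for $\sigma\in{}^k\S_n$, which the paper establishes by a separate counting lemma using the quantities $A_i(\sigma)=\{j\mid j\le i<\sigma(j)\}$, the symmetry $|A_i(\sigma)|=|A_i(\sigma^{-1})|$, and the constraint that $\sigma$ has no arcs inside $[1,k]$. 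This counting step is the missing idea in your proposal; without it (or an equivalent involution between $L_3$ and $L_4$), the proof of $\cro\,\Phi_k(\sigma)=\cro\,\sigma$ does not go through.
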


We first show how  to derive Lemma~\ref{lem:1} from
Proposition~\ref{prop:Phi}.  Let $n=n_1+\cdots+n_k$. Then
$\D(n_1,n_2,\ldots, n_k)\subseteq {}^{n_1}\S_{n}$.  By definition of
$\Phi_{n_1}$, for any $\sigma\in {}^{n_1}\S_n$ and $i\in[n-n_1]$
satisfying $\sigma(i+n_1)>n_1$, we have
$i-\Phi_{n_1}(\sigma)(i)=i+n_1-\sigma(i+n_1)$, so
$\Phi_{n_1}(\D(n_1,n_2,\ldots, n_k))\subseteq
\D(n_2,n_3,\ldots,n_k,n_1)$. Since the cardinality of
$\D(n_1,n_2,\ldots, n_k)$ is invariant by permutations of the
$n_i$'s and $\Phi_{n_1}$ is bijective, we have
$\Phi_{n_1}(\D(n_1,n_2,\ldots, n_k))=\D(n_2,n_3,\ldots,n_k,n_1)$.
The result  follows then by applying \eqref{eq:propPhi}.\\

\begin{table}[ht]
$$
\begin{array}{|c|c|c|}
\hline
 i&L_i(\sigma)&R_i(\sigma')\\
 \hline
%%%%%%%%%%%%%%%%%%%%%%%%%%%%%%%%%%%%%%%%%%%%%%%%%%%%%%%%% type 1
1&{\setlength{\unitlength}{0.6mm}
\begin{picture}(80,20)(0,-10)
\put(0,0){\line(1,0){25}}\put(35,0){\line(1,0){35}}
\put(0,0){\circle*{1,3}}\put(0,0){\makebox(-2,-6)[c]{\tiny 1}}
\put(25,0){\circle*{1,3}}\put(25,0){\makebox(-2,-6)[c]{\tiny $k$}}
\put(35,0){\circle*{1,3}}
\put(70,0){\circle*{1,3}}\put(70,0){\makebox(-2,-6)[c]{\tiny $n$}}
\qbezier(40,0)(50,10)(60,0) \qbezier(45,0)(55,10)(65,0)
\end{picture}}
&\hspace{1cm} {\setlength{\unitlength}{0.6mm}
\begin{picture}(80,10)(0,-10)
\put(0,0){\line(1,0){35}}\put(45,0){\line(1,0){25}}
\put(0,0){\circle*{1,3}}\put(0,0){\makebox(-2,-6)[c]{\tiny 1}}
\put(35,0){\circle*{1,3}}\put(35,0){\makebox(-2,-6)[c]{\tiny $n-k$}}
\put(45,0){\circle*{1,3}}
\put(70,0){\circle*{1,3}}\put(70,0){\makebox(-2,-6)[c]{\tiny $n$}}
\qbezier(5,0)(15,10)(25,0) \qbezier(10,0)(20,10)(30,0)
\end{picture}
}\\[-0.5cm]
%%%%%%%%%%%%%%%%%%%%%%%%%%%%%%%%%%%%%
& {\setlength{\unitlength}{0.6mm}
\begin{picture}(80,10)(0,0)
\put(0,0){\line(1,0){25}}\put(35,0){\line(1,0){35}}
\put(0,0){\circle*{1,3}}\put(0,0){\makebox(-2,-6)[c]{\tiny 1}}
\put(25,0){\circle*{1,3}}\put(25,0){\makebox(-2,-6)[c]{\tiny $k$}}
\put(35,0){\circle*{1,3}}
\put(70,0){\circle*{1,3}}\put(70,0){\makebox(-2,-6)[c]{\tiny $n$}}
\qbezier(40,0)(45,10)(50,0) \qbezier(50,0)(55,10)(60,0)
\end{picture}
}&\hspace{1cm}{\setlength{\unitlength}{0.6mm}
\begin{picture}(80,10)(0,0)
\put(0,0){\line(1,0){35}}\put(45,0){\line(1,0){25}}
\put(0,0){\circle*{1,3}}\put(0,0){\makebox(-2,-6)[c]{\tiny 1}}
\put(35,0){\circle*{1,3}}\put(35,0){\makebox(-2,-6)[c]{\tiny $n-k$}}
\put(45,0){\circle*{1,3}}
\put(70,0){\circle*{1,3}}\put(70,0){\makebox(-2,-6)[c]{\tiny $n$}}
\qbezier(5,0)(10,10)(15,0) \qbezier(15,0)(20,10)(25,0)
\end{picture}
}\\
%%%%%%%%%%%%%%%%%%%%%%%%%%%%%%%%%%%%%%%%%%%%%%%%%%%%%%%%%%%
&{\setlength{\unitlength}{0.6mm}
\begin{picture}(80,20)(0,-10)
\put(0,0){\line(1,0){25}}\put(35,0){\line(1,0){35}}
\put(0,0){\circle*{1,3}}\put(0,0){\makebox(-2,-6)[c]{\tiny 1}}
\put(25,0){\circle*{1,3}}\put(25,0){\makebox(-2,-6)[c]{\tiny $k$}}
\put(35,0){\circle*{1,3}}
\put(70,0){\circle*{1,3}}\put(70,0){\makebox(-2,-6)[c]{\tiny $n$}}
\qbezier(40,0)(50,-10)(60,0) \qbezier(45,0)(55,-10)(65,0)
\end{picture}
}&\hspace{1cm}{\setlength{\unitlength}{0.6mm}
\begin{picture}(80,10)(0,-10)
\put(0,0){\line(1,0){35}}\put(45,0){\line(1,0){25}}
\put(0,0){\circle*{1,3}}\put(0,0){\makebox(-2,-6)[c]{\tiny 1}}
\put(35,0){\circle*{1,3}}\put(35,0){\makebox(-2,-6)[c]{\tiny $n-k$}}
\put(45,0){\circle*{1,3}}
\put(70,0){\circle*{1,3}}\put(70,0){\makebox(-2,-6)[c]{\tiny $n$}}
\qbezier(5,0)(15,-10)(25,0) \qbezier(10,0)(20,-10)(30,0)
\end{picture}
}\\
\hline
%%%%%%%%%%%%%%%%%%%%%%%%%%%%%%%%%%%%%%%%%%%%%%%%%%%%%%%%type 2
2&\hspace{0.5cm}{\setlength{\unitlength}{0.6mm}
\begin{picture}(80,20)(0,-10)
\put(0,0){\line(1,0){25}}\put(35,0){\line(1,0){35}}
\put(0,0){\circle*{1,3}}\put(0,0){\makebox(-2,-6)[c]{\tiny 1}}
\put(25,0){\circle*{1,3}}\put(25,0){\makebox(-2,-6)[c]{\tiny $k$}}
\put(35,0){\circle*{1,3}}
\put(70,0){\circle*{1,3}}\put(70,0){\makebox(-2,-6)[c]{\tiny $n$}}
\qbezier(5,0)(30,10)(45,0) \qbezier(20,0)(40,10)(60,0)
\end{picture}
}&\hspace{1cm}{\setlength{\unitlength}{0.6mm}
\begin{picture}(80,20)(0,-10)
\put(0,0){\line(1,0){35}}\put(45,0){\line(1,0){25}}
\put(0,0){\circle*{1,3}}\put(0,0){\makebox(-2,-6)[c]{\tiny $1$}}
\put(35,0){\circle*{1,3}}\put(35,0){\makebox(-2,-6)[c]{\tiny $n-k$}}
\put(45,0){\circle*{1,3}}
\put(70,0){\circle*{1,3}}\put(70,0){\makebox(-2,-6)[c]{\tiny $n$}}
\qbezier(10,0)(40,10)(50,0) \qbezier(25,0)(45,10)(65,0)
\end{picture}
}\\[-0.5cm]
%%%%%%%%%%%%%%%%%%%%%%%%%%%%%%%%%%%%%%%%%%%%%%%%%%%%%%%%%
&\hspace{0.5cm}{\setlength{\unitlength}{0.6mm}
\begin{picture}(80,20)(0,-10)
\put(0,0){\line(1,0){25}}\put(35,0){\line(1,0){35}}
\put(0,0){\circle*{1,3}}\put(0,0){\makebox(-2,-6)[c]{\tiny 1}}
\put(25,0){\circle*{1,3}}\put(25,0){\makebox(-2,6)[c]{\tiny $k$}}
\put(35,0){\circle*{1,3}}
\put(70,0){\circle*{1,3}}\put(70,0){\makebox(-2,-6)[c]{\tiny $n$}}
\qbezier(5,0)(30,-10)(45,0) \qbezier(20,0)(40,-10)(60,0)
\end{picture}
}&\hspace{1cm}{\setlength{\unitlength}{0.6mm}
\begin{picture}(80,20)(0,-10)
\put(0,0){\line(1,0){35}}\put(45,0){\line(1,0){25}}
\put(0,0){\circle*{1,3}}\put(0,0){\makebox(-2,-6)[c]{\tiny 1}}
\put(35,0){\circle*{1,3}}\put(35,0){\makebox(-2,6)[c]{\tiny $n-k$}}
\put(45,0){\circle*{1,3}}
\put(70,0){\circle*{1,3}}\put(70,0){\makebox(-2,-6)[c]{\tiny $n$}}
\qbezier(10,0)(40,-10)(50,0) \qbezier(25,0)(45,-10)(65,0)
\end{picture}
}\\
\hline
%%%%%%%%%%%%%%%%%%%%%%%%%%%%%%%%%%%%%%%%%%%%%%%%%%%%%%%%%type 3
3&{\setlength{\unitlength}{0.6mm}
\begin{picture}(80,20)(0,-10)
\put(0,0){\line(1,0){25}}\put(35,0){\line(1,0){35}}
\put(0,0){\circle*{1,3}}\put(0,0){\makebox(-2,-6)[c]{\tiny 1}}
\put(25,0){\circle*{1,3}}\put(25,0){\makebox(-2,-6)[c]{\tiny $k$}}
\put(35,0){\circle*{1,3}}
\put(70,0){\circle*{1,3}}\put(70,0){\makebox(-2,-6)[c]{\tiny $n$}}
\qbezier(12.5,0)(37.5,10)(52.5,0) \qbezier(45,0)(52.5,10)(60,0)
\end{picture}
}&\hspace{1cm}{\setlength{\unitlength}{0.6mm}
\begin{picture}(80,10)(0,-10)
\put(0,0){\line(1,0){35}}\put(45,0){\line(1,0){25}}
\put(0,0){\circle*{1,3}}\put(0,0){\makebox(-2,-6)[c]{\tiny 1}}
\put(35,0){\circle*{1,3}}\put(35,0){\makebox(-2,-6)[c]{\tiny $n-k$}}
\put(45,0){\circle*{1,3}}
\put(70,0){\circle*{1,3}}\put(70,0){\makebox(-2,-6)[c]{\tiny $n$}}
\qbezier(10,0)(17.5,10)(25,0) \qbezier(17.5,0)(37.5,10)(57.5,0)
\end{picture}
}\\[-0.5cm]
%%%%%%%%%%%%%%%%%%%%%%%%%%%%%%%%%%%%%
& {\setlength{\unitlength}{0.6mm}
\begin{picture}(80,10)(0,0)
\put(0,0){\line(1,0){25}}\put(35,0){\line(1,0){35}}
\put(0,0){\circle*{1,3}}\put(0,0){\makebox(-2,-6)[c]{\tiny 1}}
\put(25,0){\circle*{1,3}}\put(25,0){\makebox(-2,-6)[c]{\tiny $k$}}
\put(35,0){\circle*{1,3}}
\put(70,0){\circle*{1,3}}\put(70,0){\makebox(-2,-6)[c]{\tiny $n$}}
\qbezier(12.5,0)(37.5,10)(52.5,0) \qbezier(52.5,0)(57.5,10)(62.5,0)
\end{picture}
}&\hspace{1cm}{\setlength{\unitlength}{0.6mm}
\begin{picture}(80,10)(0,0)
\put(0,0){\line(1,0){35}}\put(45,0){\line(1,0){25}}
\put(0,0){\circle*{1,3}}\put(0,0){\makebox(-2,-6)[c]{\tiny 1}}
\put(35,0){\circle*{1,3}}\put(35,0){\makebox(-2,-6)[c]{\tiny $n-k$}}
\put(45,0){\circle*{1,3}}
\put(70,0){\circle*{1,3}}\put(70,0){\makebox(-2,-6)[c]{\tiny $n$}}
\qbezier(10,0)(17.5,10)(25,0) \qbezier(25,0)(40,10)(57.5,0)
\end{picture}
}\\[-0.1cm]
%%%%%%%%%%%%%%%%%%%%%%%%%%%%%%%%%%%%%%%%%%%%%%%%%
&{\setlength{\unitlength}{0.6mm}
\begin{picture}(80,20)(0,-10)
\put(0,0){\line(1,0){25}}\put(35,0){\line(1,0){35}}
\put(0,0){\circle*{1,3}}\put(0,0){\makebox(-2,-6)[c]{\tiny 1}}
\put(25,0){\circle*{1,3}}\put(25,0){\makebox(-2,6)[c]{\tiny $k$}}
\put(35,0){\circle*{1,3}}
\put(70,0){\circle*{1,3}}\put(70,0){\makebox(-2,-6)[c]{\tiny $n$}}
\qbezier(12.5,0)(37.5,-10)(52.5,0) \qbezier(45,0)(52.5,-10)(60,0)
\end{picture}
}&\hspace{1cm}{\setlength{\unitlength}{0.6mm}
\begin{picture}(80,10)(0,-10)
\put(0,0){\line(1,0){35}}\put(45,0){\line(1,0){25}}
\put(0,0){\circle*{1,3}}\put(0,0){\makebox(-2,-6)[c]{\tiny 1}}
\put(35,0){\circle*{1,3}}\put(35,0){\makebox(-2,6)[c]{\tiny $n-k$}}
\put(45,0){\circle*{1,3}}
\put(70,0){\circle*{1,3}}\put(70,0){\makebox(-2,-6)[c]{\tiny $n$}}
\qbezier(10,0)(17.5,-10)(25,0) \qbezier(17.5,0)(37.5,-10)(57.5,0)
\end{picture}
}\\
\hline
\end{array}
$$
\caption{Forms of crossings in $L_i(\sigma)$ and
$R_i(\sigma')$.}\label{tab:forme-cr-1}
\end{table}

\noindent{\bf Proof of Proposition~\ref{prop:Phi}.}  It is easy to see that
$\Phi_k$ is a bijection. Let $\sigma\in{}^k\S_{n}$ and
$\sigma'=\Phi_k(\sigma)$. The equality $\exc(\sigma')=\exc(\sigma)$
follows directly from  the definition of $\Phi_k$. It then remains
to prove that $\cro(\sigma')=\cro(\sigma)$.  We first decompose the
crossings of $\sigma$ and $\sigma'$ into three subsets. Set
\begin{align*}
L_1(\sigma)&= \{(i,j)\;|\;k<i<j\leq \sigma(i)<\sigma(j)\quad\text{or}\quad i>j>\sigma(i)>\sigma(j)>k\},\\
L_2(\sigma)&=\{(i,j)\;|\;i<j\leq k< \sigma(i)<\sigma(j) \quad\text{or}\quad i>j>k\geq\sigma(i)>\sigma(j)\},\\
L_3(\sigma)&=\{(i,j)\;|\;i\leq k<j\leq \sigma(i)<\sigma(j)
\quad\text{or}\quad i>j>\sigma(i)>k\geq \sigma(j)\},
\end{align*}
\vspace{-0.2cm} and
\begin{align*}
R_1(\sigma')&=\{(i,j)\;|\;i<j\leq\sigma'(i)<\sigma'(j)\leq n-k \quad\text{or}\quad n-k\geq i>j>\sigma'(i)>\sigma'(j)\},\\
R_2(\sigma')&= \{(i,j)\;|\; i<j\leq n-k<\sigma'(i)<\sigma'(j)\quad\text{or}\quad i>j>n-k\geq\sigma'(i)>\sigma'(j)\},\\
R_3(\sigma')&=\{(i,j)\;|\;i<j\leq {\sigma}'(i)\leq
n-k<{\sigma}'(j)\quad\text{or}\quad i>n-k\geq j>\sigma(i)>
\sigma(j)\}.
\end{align*}
The crossings in $L_i$'s and $R_i$'s are illustrated   in
Table~\ref{tab:forme-cr-1}. Clearly, we have
$\cro(\sigma)=\sum_{i=1}^3|L_i(\sigma)|$ and
$\cro(\sigma')=\sum_{i=1}^3|R_i(\sigma')|$ since
$\sigma\in{}^k\S_{n}$ and $\sigma'\in\S^k_{n}$.

\begin{table}[ht]
$$
\begin{array}{|ccc|}
\hline
 \sigma&\longrightarrow&\sigma'\\
 \hline
%%%%%%%%%%%%%%%%%%%%%%%%%%%%%%%%%%%%%%%%%%%%%%%%%%%%%%%%%%%%%%%%%%%%%%%%%%%%%%%%%%%%%%%%%%%%%%%% type 1
{\setlength{\unitlength}{0.6mm}
\begin{picture}(90,20)(0,-10)
\put(0,0){\line(1,0){25}}\put(35,0){\line(1,0){55}}
%\put(0,0){\circle*{1,3}}\put(0,0){\makebox(-2,-6)[c]{\tiny 1}}
\put(25,0){\circle*{1,3}}\put(25,0){\makebox(-2,-6)[c]{\tiny $k$}}
\put(40,0){\makebox(-2,-6)[c]{\tiny$i$}}\put(55,0){\makebox(-2,-6)[c]{\tiny$j$}}
\put(70,0){\makebox(-2,-6)[c]{\tiny$\sigma_i$}}\put(83,0){\makebox(-2,-6)[c]{\tiny$\sigma_j$}}
%\put(35,0){\circle*{1,3}}
%\put(90,0){\circle*{1,3}}\put(92,0){\makebox(-2,-6)[c]{\tiny $n$}}
\qbezier(40,0)(55,10)(70,0) \qbezier(55,0)(70,10)(83,0)
\end{picture}}
&& {\setlength{\unitlength}{0.6mm}
\begin{picture}(90,20)(0,-10)
\put(0,0){\line(1,0){55}}\put(65,0){\line(1,0){25}}
%\put(0,0){\circle*{1,3}}\put(0,0){\makebox(-2,-6)[c]{\tiny 1}}
\put(5,0){\makebox(-2,-6)[c]{\tiny$i$-$k$}}\put(35,0){\makebox(-2,-6)[c]{\tiny$\sigma_i$-$k$}}
\put(20,0){\makebox(-2,-6)[c]{\tiny$j$-$k$}}\put(48,0){\makebox(-2,-6)[c]{\tiny$\sigma_j$-$k$}}
\put(55,0){\circle*{1,3}}\put(58,0){\makebox(-2,-6)[c]{\tiny$n$-$k$}}
%\put(90,0){\circle*{1,3}}\put(90,0){\makebox(-2,-6)[c]{\tiny $n$}}
\qbezier(5,0)(20,10)(35,0) \qbezier(20,0)(34,10)(48,0)
\end{picture}}\\[-0.5cm]
%%%%%%%%%%%%%%%%%%%%%%%%%%%%%%%%%%%%% b
{\setlength{\unitlength}{0.6mm}
\begin{picture}(90,20)(0,-10)
\put(0,0){\line(1,0){25}}\put(35,0){\line(1,0){55}}
%\put(0,0){\circle*{1,3}}\put(0,0){\makebox(-2,-6)[c]{\tiny 1}}
\put(25,0){\circle*{1,3}}\put(25,0){\makebox(-2,-6)[c]{\tiny $k$}}
\put(45,0){\makebox(-2,-6)[c]{\tiny$i$}}\put(65,0){\makebox(-2,-6)[c]{\tiny$j$}}
\put(85,0){\makebox(-2,-6)[c]{\tiny$\sigma_j$}}
%\put(35,0){\circle*{1,3}}
%\put(90,0){\circle*{1,3}}\put(92,0){\makebox(-2,-6)[c]{\tiny $n$}}
\qbezier(45,0)(55,10)(65,0) \qbezier(65,0)(75,10)(85,0)
\end{picture}}
&&  {\setlength{\unitlength}{0.6mm}
\begin{picture}(90,20)(0,-10)
\put(0,0){\line(1,0){55}}\put(65,0){\line(1,0){25}}
%\put(0,0){\circle*{1,3}}\put(0,0){\makebox(-2,-6)[c]{\tiny 1}}
\put(6,0){\makebox(-2,-6)[c]{\tiny$i$-$k$}}\put(25,0){\makebox(-2,-6)[c]{\tiny$j$-$k$}}
\put(42,0){\makebox(-2,-6)[c]{\tiny$\sigma_j$-$k$}}
\put(55,0){\circle*{1,3}}\put(58,0){\makebox(-2,-6)[c]{\tiny$n$-$k$}}
%\put(90,0){\circle*{1,3}}\put(90,0){\makebox(-2,-6)[c]{\tiny $n$}}
\qbezier(6,0)(15,10)(25,0) \qbezier(25,0)(33,10)(42,0)
\end{picture}}\\[-0.5cm]
%%%%%%%%%%%%%%%%%%%%%%%%%%%%%%%%%%%%%%%%%%%%%%%%%%%%%%%%%% c
{\setlength{\unitlength}{0.6mm}
\begin{picture}(90,20)(0,-10)
\put(0,0){\line(1,0){25}}\put(35,0){\line(1,0){55}}
%\put(0,0){\circle*{1,3}}\put(0,0){\makebox(-2,-6)[c]{\tiny 1}}
\put(25,0){\circle*{1,3}}\put(25,0){\makebox(2,6)[c]{\tiny $k$}}
\put(40,0){\makebox(2,6)[c]{\tiny$\sigma_j$}}\put(55,0){\makebox(2,6)[c]{\tiny$\sigma_i$}}
\put(70,0){\makebox(2,6)[c]{\tiny$j$}}\put(83,0){\makebox(2,6)[c]{\tiny$i$}}
%\put(35,0){\circle*{1,3}}
%\put(90,0){\circle*{1,3}}\put(92,0){\makebox(-2,-6)[c]{\tiny $n$}}
\qbezier(40,0)(55,-10)(70,0) \qbezier(55,0)(70,-10)(83,0)
\end{picture}}
&&  {\setlength{\unitlength}{0.6mm}
\begin{picture}(90,20)(0,-10)
\put(0,0){\line(1,0){55}}\put(65,0){\line(1,0){25}}
%\put(0,0){\circle*{1,3}}\put(0,0){\makebox(-2,-6)[c]{\tiny 1}}
\put(5,0){\makebox(2,6)[c]{\tiny$\sigma_j$-$k$}}\put(35,0){\makebox(2,6)[c]{\tiny$j$-$k$}}
\put(20,0){\makebox(2,6)[c]{\tiny$\sigma_i$-$k$}}\put(48,0){\makebox(2,6)[c]{\tiny$i$-$k$}}
\put(55,0){\circle*{1,3}}\put(58,0){\makebox(2,6)[c]{\tiny$n$-$k$}}
%\put(90,0){\circle*{1,3}}\put(90,0){\makebox(-2,-6)[c]{\tiny $n$}}
\qbezier(5,0)(20,-10)(35,0) \qbezier(20,0)(34,-10)(48,0)
\end{picture}}\\
\hline
%%%%%%%%%%%%%%%%%%%%%%%%%%%%%%%%%%%%%%%%%%%%%%%%%%%%%%%%%%%%%%%%%%%%%%%%%%%%%%%%%%%%%%%%%%%%%%%%%%%%%%%%%%%%%%%%%%%%%%%%%%type 2
{\setlength{\unitlength}{0.6mm}
\begin{picture}(90,20)(0,-10)
\put(0,0){\line(1,0){40}}\put(50,0){\line(1,0){40}}
%\put(0,0){\circle*{1,3}}\put(0,0){\makebox(-2,-6)[c]{\tiny 1}}
\put(40,0){\circle*{1,3}}\put(40,0){\makebox(-2,-6)[c]{\tiny $k$}}
\put(10,0){\makebox(-2,-6)[c]{\tiny$i$}}\put(30,0){\makebox(-2,-6)[c]{\tiny$j$}}
\put(60,0){\makebox(-2,-6)[c]{\tiny$\sigma_i$}}\put(80,0){\makebox(-2,-6)[c]{\tiny$\sigma_j$}}
%\put(35,0){\circle*{1,3}}
%\put(90,0){\circle*{1,3}}\put(92,0){\makebox(-2,-6)[c]{\tiny $n$}}
\qbezier(10,0)(35,12)(60,0) \qbezier(30,0)(55,12)(80,0)
\end{picture}}
&&  {\setlength{\unitlength}{0.6mm}
\begin{picture}(90,20)(0,-10)
\put(0,0){\line(1,0){40}}\put(50,0){\line(1,0){40}}
%\put(0,0){\circle*{1,3}}\put(0,0){\makebox(-2,-6)[c]{\tiny 1}}
\put(40,0){\circle*{1,3}}\put(40,0){\makebox(2,6)[c]{\tiny $n$-$k$}}
\put(10,0){\makebox(2,6)[c]{\tiny$\sigma_i$-$k$}}\put(30,0){\makebox(2,6)[c]{\tiny$\sigma_j$-$k$}}
\put(60,0){\makebox(2,6)[c]{\tiny$n$-$k$+$i$}}\put(80,0){\makebox(2,6)[c]{\tiny$n$-$k$+$j$}}
%\put(35,0){\circle*{1,3}}
%\put(90,0){\circle*{1,3}}\put(92,0){\makebox(-2,-6)[c]{\tiny $n$}}
\qbezier(10,0)(35,-12)(60,0) \qbezier(30,0)(55,-12)(80,0)
\end{picture}}\\[-0.3cm]
%%%%%%%%%%%%%%%%%%%%%%%%%%%%%%%%%%%%% b
 {\setlength{\unitlength}{0.6mm}
\begin{picture}(90,20)(0,-10)
\put(0,0){\line(1,0){40}}\put(50,0){\line(1,0){40}}
%\put(0,0){\circle*{1,3}}\put(0,0){\makebox(-2,-6)[c]{\tiny 1}}
\put(40,0){\circle*{1,3}}\put(40,0){\makebox(2,6)[c]{\tiny $k$}}
\put(10,0){\makebox(2,6)[c]{\tiny$\sigma_j$}}\put(30,0){\makebox(2,6)[c]{\tiny$\sigma_i$}}
\put(60,0){\makebox(2,6)[c]{\tiny$j$}}\put(80,0){\makebox(2,6)[c]{\tiny$i$}}
%\put(35,0){\circle*{1,3}}
%\put(90,0){\circle*{1,3}}\put(92,0){\makebox(-2,-6)[c]{\tiny $n$}}
\qbezier(10,0)(35,-12)(60,0) \qbezier(30,0)(55,-12)(80,0)
\end{picture}}
&&  {\setlength{\unitlength}{0.6mm}
\begin{picture}(90,20)(0,-10)
\put(0,0){\line(1,0){40}}\put(50,0){\line(1,0){40}}
%\put(0,0){\circle*{1,3}}\put(0,0){\makebox(-2,-6)[c]{\tiny 1}}
\put(40,0){\circle*{1,3}}\put(40,0){\makebox(-2,-6)[c]{\tiny
$n$-$k$}}
\put(10,0){\makebox(-2,-6)[c]{\tiny$j$-$k$}}\put(30,0){\makebox(-2,-6)[c]{\tiny$i$-$k$}}
\put(60,0){\makebox(-2,-6)[c]{\tiny$n$-$k$+$\sigma_j$}}\put(80,0){\makebox(-2,-6)[c]{\tiny$n$-$k$+$\sigma_i$}}
%\put(35,0){\circle*{1,3}}
%\put(90,0){\circle*{1,3}}\put(92,0){\makebox(-2,-6)[c]{\tiny $n$}}
\qbezier(10,0)(35,12)(60,0) \qbezier(30,0)(55,12)(80,0)
\end{picture}}\\
\hline
%%%%%%%%%%%%%%%%%%%%%%%%%%%%%%%%%%%%%%%%%%%%%%%%%%%%%%%%%%%%%%%%%%%%%%%%%%%%%%%%%%%%%%%%%%%%%%%%%%%%%%%%%%%%%%%%%%%%%%%% type 3
{\setlength{\unitlength}{0.6mm}
\begin{picture}(90,20)(0,-10)
\put(0,0){\line(1,0){25}}\put(35,0){\line(1,0){55}}
%\put(0,0){\circle*{1,3}}\put(0,0){\makebox(-2,-6)[c]{\tiny 1}}
\put(25,0){\circle*{1,3}}\put(25,0){\makebox(-2,-6)[c]{\tiny $k$}}
\put(15,0){\makebox(-2,-6)[c]{\tiny$i$}}\put(50,0){\makebox(-2,-6)[c]{\tiny$j$}}
\put(65,0){\makebox(-2,-6)[c]{\tiny$\sigma_i$}}\put(75,0){\makebox(-2,-6)[c]{\tiny$\sigma_j$}}
%\put(35,0){\circle*{1,3}}
%\put(90,0){\circle*{1,3}}\put(92,0){\makebox(-2,-6)[c]{\tiny $n$}}
\qbezier(15,0)(40,12)(65,0) \qbezier(50,0)(62,10)(75,0)
\end{picture}}
&&  {\setlength{\unitlength}{0.6mm}
\begin{picture}(90,20)(0,-10)
\put(0,0){\line(1,0){55}}\put(65,0){\line(1,0){25}}
%\put(0,0){\circle*{1,3}}\put(0,0){\makebox(-2,-6)[c]{\tiny 1}}
\put(10,0){\makebox(-2,-6)[c]{\tiny$j$-$k$}}\put(25,0){\makebox(-2,4)[c]{\tiny$\sigma_i$-$k$}}
\put(40,0){\makebox(5,-6)[c]{\tiny$\sigma_j$-$k$}}\put(78,0){\makebox(-2,6)[c]{\tiny$n$-$k$+$i$}}
\put(55,0){\circle*{1,3}}\put(58,0){\makebox(-2,6)[c]{\tiny$n$-$k$}}
%\put(90,0){\circle*{1,3}}\put(90,0){\makebox(-2,-6)[c]{\tiny $n$}}
\qbezier(10,0)(25,12)(40,0) \qbezier(25,0)(45,-13)(78,0)
\end{picture}}\\[-0.5cm]
%%%%%%%%%%%%%%%%%%%%%%%%%%%%%%%%%%%%% b
{\setlength{\unitlength}{0.6mm}
\begin{picture}(90,20)(0,-10)
\put(0,0){\line(1,0){25}}\put(35,0){\line(1,0){55}}
%\put(0,0){\circle*{1,3}}\put(0,0){\makebox(-2,-6)[c]{\tiny 1}}
\put(25,0){\circle*{1,3}}\put(25,0){\makebox(-2,-6)[c]{\tiny $k$}}
\put(15,0){\makebox(-2,-6)[c]{\tiny$i$}}\put(55,0){\makebox(-2,-6)[c]{\tiny$j$}}
\put(75,0){\makebox(-2,-6)[c]{\tiny$\sigma_j$}}
%\put(35,0){\circle*{1,3}}
%\put(90,0){\circle*{1,3}}\put(92,0){\makebox(-2,-6)[c]{\tiny $n$}}
\qbezier(15,0)(35,12)(55,0) \qbezier(55,0)(65,10)(75,0)
\end{picture}}
&&  {\setlength{\unitlength}{0.6mm}
\begin{picture}(90,20)(0,-10)
\put(0,0){\line(1,0){55}}\put(65,0){\line(1,0){25}}
%\put(0,0){\circle*{1,3}}\put(0,0){\makebox(-2,-6)[c]{\tiny 1}}
\put(10,0){\makebox(-2,-6)[c]{\tiny$j$-$k$}}\put(35,0){\makebox(2,-6)[c]{\tiny$\sigma_j$-$k$}}
\put(80,0){\makebox(-2,6)[c]{\tiny$n$-$k$+$i$}}
\put(55,0){\circle*{1,3}}\put(58,0){\makebox(-2,6)[c]{\tiny$n$-$k$}}
%\put(90,0){\circle*{1,3}}\put(90,0){\makebox(-2,-6)[c]{\tiny $n$}}
\qbezier(12,0)(24,10)(35,0) \qbezier(12,0)(45,-12)(78,0)
\end{picture}}\\[-0.3cm]
%%%%%%%%%%%%%%%%%%%%%%%%%%%%%%%%%%%%%%%%%%%%%%%%%%%%%%%%%% c
{\setlength{\unitlength}{0.6mm}
\begin{picture}(90,20)(0,-10)
\put(0,0){\line(1,0){25}}\put(35,0){\line(1,0){55}}
%\put(0,0){\circle*{1,3}}\put(0,0){\makebox(-2,-6)[c]{\tiny 1}}
\put(25,0){\circle*{1,3}}\put(25,0){\makebox(-2,6)[c]{\tiny $k$}}
\put(15,0){\makebox(-2,6)[c]{\tiny$\sigma_j$}}\put(50,0){\makebox(-2,6)[c]{\tiny$\sigma_i$}}
\put(65,0){\makebox(-2,6)[c]{\tiny$j$}}\put(75,0){\makebox(-2,6)[c]{\tiny$i$}}
%\put(35,0){\circle*{1,3}}
%\put(90,0){\circle*{1,3}}\put(92,0){\makebox(-2,-6)[c]{\tiny $n$}}
\qbezier(15,0)(40,-12)(65,0) \qbezier(50,0)(62,-10)(75,0)
\end{picture}}
&&  {\setlength{\unitlength}{0.6mm}
\begin{picture}(90,20)(0,-10)
\put(0,0){\line(1,0){55}}\put(65,0){\line(1,0){25}}
%\put(0,0){\circle*{1,3}}\put(0,0){\makebox(-2,-6)[c]{\tiny 1}}
\put(10,0){\makebox(-2,6)[c]{\tiny$\sigma_i$-$k$}}\put(25,0){\makebox(-2,-6)[c]{\tiny$j$-$k$}}
\put(40,0){\makebox(5,6)[c]{\tiny$i$-$k$}}\put(78,0){\makebox(-2,-6)[c]{\tiny$n$-$k$+$\sigma_j$}}
\put(55,0){\circle*{1,3}}\put(58,0){\makebox(-2,-6)[c]{\tiny$n$-$k$}}
%\put(90,0){\circle*{1,3}}\put(90,0){\makebox(-2,-6)[c]{\tiny $n$}}
\qbezier(10,0)(25,-12)(40,0) \qbezier(25,0)(45,13)(78,0)
\end{picture}}\\
\hline
%%%%%%%%%%%%%%%%%%%%%%%%%%%%%%%%%%%%%%%%%%%%%%%%%%%%%%%%%%%%%%%%%%%%%%%%%%%%%%%%%%%%%%%%%%%%%%%%%%%%%%%%%%%%%%%%%% type4
{\setlength{\unitlength}{0.6mm}
\begin{picture}(90,20)(0,-10)
\put(0,0){\line(1,0){25}}\put(35,0){\line(1,0){55}}
%\put(0,0){\circle*{1,3}}\put(0,0){\makebox(-2,-6)[c]{\tiny 1}}
\put(25,0){\circle*{1,3}}\put(25,0){\makebox(-2,6)[c]{\tiny $k$}}
\put(15,0){\makebox(-2,6)[c]{\tiny$\sigma_i$}}\put(50,0){\makebox(-2,-6)[c]{\tiny$j$}}
\put(65,0){\makebox(-2,4)[c]{\tiny$i$}}\put(75,0){\makebox(-2,-6)[c]{\tiny$\sigma_j$}}
%\put(35,0){\circle*{1,3}}
%\put(90,0){\circle*{1,3}}\put(92,0){\makebox(-2,-6)[c]{\tiny $n$}}
\qbezier(15,0)(40,-12)(65,0) \qbezier(50,0)(62,12)(75,0)
\end{picture}}
&&  {\setlength{\unitlength}{0.6mm}
\begin{picture}(90,20)(0,-10)
\put(0,0){\line(1,0){55}}\put(65,0){\line(1,0){25}}
%\put(0,0){\circle*{1,3}}\put(0,0){\makebox(-2,-6)[c]{\tiny 1}}
\put(10,0){\makebox(-2,-6)[c]{\tiny$j$-$k$}}\put(25,0){\makebox(-2,-6)[c]{\tiny$i$-$k$}}
\put(40,0){\makebox(5,-6)[c]{\tiny$\sigma_j$-$k$}}\put(78,0){\makebox(-2,-6)[c]{\tiny$n$-$k$+$\sigma_i$}}
\put(55,0){\circle*{1,3}}\put(58,0){\makebox(-2,-6)[c]{\tiny$n$-$k$}}
%\put(90,0){\circle*{1,3}}\put(90,0){\makebox(-2,-6)[c]{\tiny $n$}}
\qbezier(10,0)(25,12)(40,0) \qbezier(25,0)(45,13)(78,0)
\end{picture}}\\[-0.5cm]
%%%%%%%%%%%%%%%%%%%%%%%%%%%%%%%%%%%%% b
{\setlength{\unitlength}{0.6mm}
\begin{picture}(90,20)(0,-10)
\put(0,0){\line(1,0){25}}\put(35,0){\line(1,0){55}}
%\put(0,0){\circle*{1,3}}\put(0,0){\makebox(-2,-6)[c]{\tiny 1}}
\put(25,0){\circle*{1,3}}\put(25,0){\makebox(-2,6)[c]{\tiny $k$}}
\put(15,0){\makebox(-2,-6)[c]{\tiny$\sigma_i$}}\put(55,0){\makebox(-2,-6)[c]{\tiny$j$}}
\put(75,0){\makebox(5,-6)[c]{\tiny$i$}}
%\put(35,0){\circle*{1,3}}
%\put(90,0){\circle*{1,3}}\put(92,0){\makebox(-2,-6)[c]{\tiny $n$}}
\qbezier(15,0)(45,-13)(75,0) \qbezier(55,0)(65,10)(75,0)
\end{picture}}
&&  {\setlength{\unitlength}{0.6mm}
\begin{picture}(90,20)(0,-10)
\put(0,0){\line(1,0){55}}\put(65,0){\line(1,0){25}}
%\put(0,0){\circle*{1,3}}\put(0,0){\makebox(-2,-6)[c]{\tiny 1}}
\put(10,0){\makebox(-2,-6)[c]{\tiny$j$-$k$}}\put(35,0){\makebox(2,-6)[c]{\tiny$i$-$k$}}
\put(80,0){\makebox(-2,-6)[c]{\tiny$n$-$k$+$\sigma_i$}}
\put(55,0){\circle*{1,3}}\put(58,0){\makebox(-2,-6)[c]{\tiny$n$-$k$}}
%\put(90,0){\circle*{1,3}}\put(90,0){\makebox(-2,-6)[c]{\tiny $n$}}
\qbezier(12,0)(24,10)(35,0) \qbezier(35,0)(55,12)(78,0)
\end{picture}}\\[-0.3cm]
%%%%%%%%%%%%%%%%%%%%%%%%%%%%%%%%%%%%%%%%%%%%%%%%%%%%%%%%%% c
{\setlength{\unitlength}{0.6mm}
\begin{picture}(90,20)(0,-10)
\put(0,0){\line(1,0){25}}\put(35,0){\line(1,0){55}}
%\put(0,0){\circle*{1,3}}\put(0,0){\makebox(-2,-6)[c]{\tiny 1}}
\put(25,0){\circle*{1,3}}\put(25,0){\makebox(-2,-6)[c]{\tiny $k$}}
\put(15,0){\makebox(-2,-6)[c]{\tiny$i$}}\put(50,0){\makebox(-2,4)[c]{\tiny$\sigma_j$}}
\put(65,0){\makebox(-2,-6)[c]{\tiny$\sigma_i$}}\put(75,0){\makebox(-2,6)[c]{\tiny$j$}}
%\put(35,0){\circle*{1,3}}
%\put(90,0){\circle*{1,3}}\put(92,0){\makebox(-2,-6)[c]{\tiny $n$}}
\qbezier(15,0)(40,12)(65,0) \qbezier(50,0)(62,-12)(75,0)
\end{picture}}
&&  {\setlength{\unitlength}{0.6mm}
\begin{picture}(90,20)(0,-10)
\put(0,0){\line(1,0){55}}\put(65,0){\line(1,0){25}}
%\put(0,0){\circle*{1,3}}\put(0,0){\makebox(-2,-6)[c]{\tiny 1}}
\put(10,0){\makebox(-2,6)[c]{\tiny$\sigma_j$-$k$}}\put(25,0){\makebox(-2,6)[c]{\tiny$\sigma_i$-$k$}}
\put(40,0){\makebox(5,6)[c]{\tiny$j$-$k$}}\put(78,0){\makebox(-2,6)[c]{\tiny$n$-$k$+$i$}}
\put(55,0){\circle*{1,3}}\put(58,0){\makebox(-2,6)[c]{\tiny$n$-$k$}}
%\put(90,0){\circle*{1,3}}\put(90,0){\makebox(-2,-6)[c]{\tiny $n$}}
\qbezier(10,0)(25,-12)(40,0) \qbezier(25,0)(45,-13)(78,0)
\end{picture}}\\
\hline
\end{array}
$$
\caption{Effects of the mapping $\Phi_k$ on the crossings of
$\sigma$ and $\sigma'$.}\label{tab:effet Psi sur cr}
\end{table}

By  the definition of $\Phi_k$,  it is readily seen (see Row~1 in
Table~\ref{tab:effet Psi sur cr}) that $ (i,j)\in L_1(\sigma)$ if
and only if $(i-k,j-k)\in R_1(\sigma')$, and thus
$|L_1(\sigma)|=|R_1(\sigma')|$. Similarly, we have (see Row 2 in
Table~\ref{tab:effet Psi sur cr}) that
$|L_2(\sigma)|=|R_2(\sigma')|$. It then remains to prove that
$|L_3(\sigma)|=|R_3(\sigma')|$. Let
\begin{align*}
L_4(\sigma)&=\{(i,j)\;|\;\sigma(i)\leq k<j<i\leq \sigma(j)
\quad\text{or}\quad i\leq k<\sigma(j)<\sigma(i)<j\}.
\end{align*}
Then it is not difficult to show (see Row~4 of Table~\ref{tab:effet Psi sur cr}) that $|R_3(\sigma')|=|L_4(\sigma)|$.
The result will thus follow from the following Lemma.
%%%%%%%%%%%%%%%%%%
\begin{lem} For all $\sigma\in {^k\S_{n}}$ we have  $|L_3(\sigma)|=|L_4(\sigma)|$.
\end{lem}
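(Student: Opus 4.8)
I would prove $|L_3(\sigma)|=|L_4(\sigma)|$ by first re-reading both sides as sets of crossings of $\sigma$ carrying a distinguished ``boundary arc'', and then reducing the equality to a double count. Throughout, think of $\sigma$ through its arc diagram, and call $i\to\sigma(i)$ a \emph{boundary arc} if $\min(i,\sigma(i))\le k$. Since $\sigma\in{^k\S_{n}}$, the boundary arcs are exactly the $k$ excedance arcs $i\to\sigma(i)$ with $i\in[k]:=\{1,\dots,k\}$, together with the $k$ deficiency arcs $\sigma^{-1}(v)\to v$ with $v\in[k]$.

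\textbf{Step 1: reformulation.} Unwinding the four defining inequalities, $L_3(\sigma)$ is the set of same-side crossings $\{i\to\sigma(i),\,j\to\sigma(j)\}$ of $\sigma$ in which the least of the four integers $i,j,\sigma(i),\sigma(j)$ lies in $[k]$ while the other three exceed $k$, and $L_4(\sigma)$ is the set of opposite-side interleaving pairs of arcs with that same property. In either case the pair contains exactly one boundary arc; moreover this boundary arc is an excedance arc in the first half of $L_3$ and in the second half of $L_4$, and a deficiency arc in the remaining two halves. Thus $|L_3|+|L_4|$ and $|L_3|-|L_4|$ are obtained by grouping the crossings according to their unique boundary arc.

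\textbf{Step 2: count, then collapse.} Fix a boundary excedance arc with source $i\le k$ and value $s=\sigma(i)$. A direct check shows it occurs in $\#\{j:k<j\le s,\ \sigma(j)>s\}$ elements of the first half of $L_3$ and in $\#\{j:j>s,\ k<\sigma(j)<s\}$ elements of the second half of $L_4$. Likewise a boundary deficiency arc with source $t>k$ occurs in $\#\{j:j>t,\ k<\sigma(j)<t\}$ elements of the second half of $L_3$ and in $\#\{j:k<j<t,\ \sigma(j)\ge t\}$ elements of the first half of $L_4$. Summing over the $2k$ boundary arcs gives $|L_3|$ and $|L_4|$. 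Now introduce the height function $F(m)=\#\{i\le m<\sigma(i)\}=\#\{i>m\ge\sigma(i)\}$ and the tails $V(m)=\#\{v\in\sigma([k]):v>m\}$, $W(m)=\#\{w\in\sigma^{-1}([k]):w>m\}$. Each of the four inner counts rewrites, after this substitution, as a short expression in $F$, $V$, $W$ together with two or three $0/1$ terms; when one forms $|L_3|-|L_4|$ the $F$-terms cancel outright or telescope into $0/1$ terms, leaving a sum, over $v\in\sigma([k])$ and $w\in\sigma^{-1}([k])$, of values of $V$, $W$ and indicators. That sum is evaluated by the elementary identities $\sum_{v\in\sigma([k])}\#\{v'\in\sigma([k]):v'>v\}=\binom{k}{2}$ (and its analogue for $\sigma^{-1}([k])$) and $\#\{(v,w)\in\sigma([k])\times\sigma^{-1}([k]):v>w\}+\#\{(v,w):w>v\}=k^2-|\sigma([k])\cap\sigma^{-1}([k])|$; the point is that $|\sigma([k])\cap\sigma^{-1}([k])|$ is produced twice with opposite signs, so $|L_3(\sigma)|-|L_4(\sigma)|=0$.

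\textbf{Main obstacle.} I expect the delicate part to be Step 1 together with the exact bookkeeping of the $0/1$ terms in Step 2: which inequalities among $i,j,\sigma(i),\sigma(j)$ are strict, and which indicator appears when an endpoint of the fixed boundary arc coincides with an endpoint of its counting window; once the four inner counts are expressed through $F$, $V$, $W$, the remaining algebra is routine. A more picture-based alternative, in the spirit of the rest of the paper, would be to construct an explicit bijection $L_3(\sigma)\to L_4(\sigma)$ (matching the two halves of $L_3$ with the two halves of $L_4$); small cases show such a bijection cannot keep the non-boundary arc of the crossing fixed, so it would have to be given by an arc-chasing rule, which I would pursue only if a clean recursive description emerges.
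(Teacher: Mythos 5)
Your proposal is correct and takes essentially the same route as the paper: your sums over the $2k$ boundary arcs are exactly the paper's decompositions of $|L_3(\sigma)|$ and $|L_4(\sigma)|$ over $\sigma([1,k])$ and $\sigma^{-1}([1,k])$, your height function $F$ is the paper's identity $|A_m(\sigma)|=|A_m(\sigma^{-1})|$, and your final counts (the two $\binom{k}{2}$'s and the $k^2$ from the cross pairs, with your intersection term just a different bookkeeping of the ties that the paper handles via $\chi(\sigma^{-1}(j_s)>j_s)+\chi(\sigma^{-1}(j_s)<j_s)=1$) match the paper's concluding computation. So there is no essential difference in approach.
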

%%%%%%%%%%%%%%%%%%%
\begin{proof}
Suppose $ \sigma([1,k])=\{i_1, i_2, \ldots, i_k\}_<$ and
$\sigma^{-1}([1,k])=\{j_1, j_2, \ldots, j_k\}_<$.  Then
\begin{align*}
 |L_3(\sigma)|&=\sum_{s=1}^k(|\{\ell\;|\;k<\ell\leq i_s<\sigma(\ell)\}|
 +|\{\ell\;|\;\ell>j_s>\sigma(\ell)>k\}|), \\%\label{eq:L3-decompo}\\
 |L_4(\sigma)|&=\sum_{s=1}^k(|\{\ell\;|\;\ell> i_s>\sigma(\ell)>k\}|
 +|\{\ell\;|\;k<\ell<j_s\leq\sigma(\ell)\}|).%\label{eq:L4-decompo}
\end{align*}
For  $i\in[n]$  define the set $A_i(\sigma)=\{j\;|\; j\leq i < \sigma(j)\}$.
Then it is easily seen that
\begin{align}\label{eq:Ai-sigma-sigma-1}
|A_i(\sigma)|=|\{j\;|\; j> i\geq
\sigma(j)\}|=|A_i(\sigma^{-1})|.
\end{align}
Noticing that, for  $s\in[k]$, 
\begin{align*}
|\{\ell\;|\;k<\ell\leq i_s<\sigma(\ell)\}|&=|\{\ell\;|\;\ell\leq
i_s<\sigma(\ell)\}|
-|\{\ell\;|\ell \leq k<i_s<\sigma(\ell)\}|\nonumber\\
&=|A_{i_s}(\sigma)|-|\{t\;|i_t>i_s\}|,%\label{eq:L3-a}
\end{align*}
\begin{align*}
|\{\ell\;|\;\ell>j_s>\sigma(\ell)>k\}|
&=|\{\ell\;|\;\ell>j_s>\sigma(\ell)\}|-|\{\ell\;|\;\ell>j_s>k\geq\sigma(\ell)\}|\nonumber\\
&=|\{\ell\;|\;\ell>j_s>\sigma(\ell)\}|-|\{t\;|j_t>j_s\}|\nonumber\\
&=|A_{j_s}(\sigma^{-1})|-\chi(\sigma^{-1}(j_s)>j_s)-|\{t\;|j_t>j_s\}|,%\label{eq:L3-b}
\end{align*}
and 
\begin{align*}
|\{\ell\;|\;\ell>i_s>\sigma(\ell)>k\}|&=|A_{i_s}(\sigma^{-1})|-|\{t\;|\;
j_t>i_s\}|,\\%\label{eq:L4-a}\\
 |\{\ell\;|\;k\leq
\ell<j_s\leq\sigma(\ell)\}|&=|A_{j_s}(\sigma)|+\chi(\sigma^{-1}(j_s)<j_s)-|\{t\;|\;
i_t\geq j_s\}|,%\label{eq:L4-b}
\end{align*}
%%%%%%%%%%%%%%%%%%%%
 we can rewrite $|L_3(\sigma)|$ and $|L_4(\sigma)|$,  using \eqref{eq:Ai-sigma-sigma-1},  as follows:
\begin{align}
|L_3(\sigma)|&=A-\sum_{s=1}^k(\chi(\sigma^{-1}(j_s)>j_s)+|\{t\;|\;
i_t>i_s\}|+|\{t\;|\; j_t>j_s\}|),\label{eq:L3-simplifie}\\
|L_4(\sigma)|&=A+\sum_{s=1}^k(\chi(\sigma^{-1}(j_s)<j_s)
-|\{t\;|\; j_t>i_s\}|-|\{t\;|\; i_t\geq
j_s\}|),\label{eq:L4-simplifie}
\end{align}
where $A=\sum_{s=1}^k(|A_{i_s}(\sigma)|+|A_{j_s}(\sigma)|$.

Since $|\{t\;|\; i_t>i_s\}|=|\{t\;|\;
j_t>j_s\}|=k-s$, we have
\begin{align*}
\sum_{s=1}^k(|\{t\;|\; i_t>i_s\}|+|\{t\;|\;
j_t>j_s\}|)=k(k-1).%\label{eq:detail1a}
\end{align*}
Also, 
\begin{align*}
\sum_{s=1}^k(|\{t\;|\; j_t>i_s\}|+|\{t\;|\; i_t\geq j_s\}|)=\sum_{s,t=1}^k(\chi( j_t>i_s)+\chi(i_t\geq j_{s}))
=k^{2}.%\label{eq:detail2a}
\end{align*}
Substituting the above values 
 into  \eqref{eq:L3-simplifie} and
\eqref{eq:L4-simplifie} leads to
\begin{align*}
|L_3(\sigma)|-|L_4(\sigma)|
&=k-\sum_{s=1}^k(\chi(\sigma^{-1}(j_s)>j_s)+\chi(\sigma^{-1}(j_s)<j_s))=0,
\end{align*}
where the last equality follows from the fact that
$\sigma^{-1}(j_s)\neq j_s$ for all $s\in [k]$.
\end{proof}

%%%%****%%%%****%%%%****%%%%****%%%%****%%%%****%%%%****%%%%*
%%%%
%%%% New Section
%%%%
%%%%****%%%%****%%%%****%%%%****%%%%****%%%%****%%%%****%%%%*
\section{Proof of  Lemma~\ref{lem:2}}
Let  $N_2:=n_1+n_2\leq n$ and 
define 
$$
{\S_{n}^{\,(n_1,n_2)}}:=\{\sigma\in \S_{n}: (i,\sigma(i))\notin [1,n_1]^2\cup [n_1+1,N_2]^2\}.
$$
Hence, in the graph of any permutation in ${\S_{n}^{\,(n_1,n_2)}}$ there is no arc between any
two integers in $[1,n_1]$  or $[n_1+1,\, N_2]$.

 We now construct a mapping  $\Gamma^{(n_1,n_2)}: \sigma\mapsto \sigma'$ from
$\S^{\,(n_1,n_2)}_{n}$ to $\S^{\,(n_2,n_1)}_{n}$ as follows. For
$i=1,\ldots,n$,
\begin{enumerate}
\item If $i>N_2$ and $\sigma(i)>N_2$, set
$\sigma'(i)=\sigma(i)$.

\item Suppose
\begin{align*}
\{(i,\sigma(i))\;|\;i<\sigma(i)\leq N_2\}&=\{(i_1,N_2+1-j_1),\ldots,(i_p,N_2+1-j_p)\},\\
\{(\sigma(i),i)\;|\;\sigma(i)<i\leq
N_2\}&=\{(k_1,N_2+1-\ell_1),\ldots,(k_q,N_2+1-\ell_q)\}.
\end{align*}
Then set $\sigma'(j_s)=N_2+1-i_s$ and $\sigma'(N_2+1-k_t)=\ell_t$
for any $s\in[p]$ and $t\in[q]$.

\item  Let
$C=\{i\in[1,N_2]:\sigma(i)>N_2\}$ and 
$D=\{i\in[1,N_2]:\sigma^{-1}(i)>N_2\}$.
It is clear that $|C|$=$|D|$. Suppose
$C=\{c_1,c_2,\ldots,c_u\}_{<}$,
 $D=\{d_1,d_2,\ldots,d_u\}_{<}$,
$\sigma(C)=\{r_1,r_2,\ldots,r_u \}_{<}$ and
$\sigma^{-1}(D)=\{s_1,s_2,\ldots,s_u \}_{<}$. Then, there are (unique) permutations 
$\alpha,\beta\in\S_u$   satisfying
$\sigma(c_i)=r_{\alpha(i)}$ and $\sigma^{-1}(d_i)=s_{\beta(i)}$ for
each $1\leq i\leq u$. Let
\begin{align*}
E&=[1,N_2]\setminus\{j_1,\ldots,j_p,N_2+1-k_1,\ldots,N_2+1-k_q\},\\
F&=[1,N_2]\setminus\{N_2+1-i_1,\ldots,N_2+1-i_p,\ell_1,\ldots,\ell_q\}.
\end{align*}
Clearly, we have $|E|=|C|$ and $|F|=|D|$. Suppose
$E=\{e_1,\ldots,e_u\}_{<}$ and $F=\{f_1,\ldots,f_u\}_{<}$. Then set
$\sigma'(e_i)=r_{\alpha(i)}$ and $\sigma'(s_i)=f_{\beta(i)}$ for
each $1\leq i\leq u$.
\end{enumerate}
The mapping is illustrated in 
Table~\ref{tab:descriptionGamma}.

\begin{table}[h]
$$
\begin{array}{|ccc|}
\hline
\sigma &\longrightarrow &\hspace{1cm} \sigma'\\
\hline
%%%%%%%%%%%%%%%%%%%%%%%%%%%%%%%%%%%%%%%%%%%%%%%%%%%%%%%%%%%%%%%%%%%%%%%%%%%%%%%%%%%%%%%%%%%%%%%%%% type 1
 {\setlength{\unitlength}{0.6mm}\begin{picture}(70,15)(0,-5)
\put(0,0){\line(1,0){25}}\put(35,0){\line(1,0){35}}
\put(0,0){\circle*{1,3}}\put(0,0){\makebox(-2,-6)[c]{\tiny 1}}
\put(25,0){\circle*{1,3}}\put(25,0){\makebox(-2,-6)[c]{\tiny $N_2$}}
\put(35,0){\circle*{1,3}}
\put(70,0){\circle*{1,3}}\put(70,0){\makebox(-2,-6)[c]{\tiny $n$}}
 \qbezier(45,0)(52.5,10)(60,0)\put(45,0){\makebox(-2,-6)[c]{\tiny $i$}}
 \put(60,0){\makebox(-2,-6)[c]{\tiny $\sigma(i)$}}
\end{picture}}&\longrightarrow&\hspace{1cm}{\setlength{\unitlength}{0.6mm}\begin{picture}(70,15)(0,-5)
\put(0,0){\line(1,0){25}}\put(35,0){\line(1,0){35}}
\put(0,0){\circle*{1,3}}\put(0,0){\makebox(-2,-6)[c]{\tiny 1}}
\put(25,0){\circle*{1,3}}\put(25,0){\makebox(-2,-6)[c]{\tiny $N_2$}}
\put(35,0){\circle*{1,3}}
\put(70,0){\circle*{1,3}}\put(70,0){\makebox(-2,-6)[c]{\tiny $n$}}
 \qbezier(45,0)(52.5,10)(60,0)
 \put(45,0){\makebox(-2,-6)[c]{\tiny $i$}}
 \put(60,0){\makebox(-2,-6)[c]{\tiny $\sigma(i)$}}
\end{picture}}\\
%%%%%%%%%%%%%%%%%%%%%%%%%%%%%%%%%%%%%
 {\setlength{\unitlength}{0.6mm}\begin{picture}(70,15)(0,-5)
\put(0,0){\line(1,0){25}}\put(35,0){\line(1,0){35}}
\put(0,0){\circle*{1,3}}\put(0,0){\makebox(-2,-6)[c]{\tiny 1}}
\put(25,0){\circle*{1,3}}\put(25,0){\makebox(-2,-6)[c]{\tiny $N_2$}}
\put(35,0){\circle*{1,3}}
\put(70,0){\circle*{1,3}}\put(70,0){\makebox(-2,-6)[c]{\tiny $n$}}
 \qbezier(45,0)(52.5,-10)(60,0)
 \put(45,0){\makebox(-2,6)[c]{\tiny $\sigma(i)$}}
 \put(60,0){\makebox(-2,6)[c]{\tiny $i$}}
\end{picture}}&\longrightarrow&\hspace{1cm}{\setlength{\unitlength}{0.6mm}\begin{picture}(70,15)(0,-5)
\put(0,0){\line(1,0){25}}\put(35,0){\line(1,0){35}}
\put(0,0){\circle*{1,3}}\put(0,0){\makebox(-2,-6)[c]{\tiny 1}}
\put(25,0){\circle*{1,3}}\put(25,0){\makebox(-2,-6)[c]{\tiny $N_2$}}
\put(35,0){\circle*{1,3}}
\put(70,0){\circle*{1,3}}\put(70,0){\makebox(-2,-6)[c]{\tiny $n$}}
 \qbezier(45,0)(52.5,-10)(60,0)
 \put(45,0){\makebox(-2,6)[c]{\tiny $\sigma(i)$}}
 \put(60,0){\makebox(-2,6)[c]{\tiny $i$}}
\end{picture}}\\[0.5cm]
\hline
%%%%%%%%%%%%%%%%%%%%%%%%%%%%%%%%%%%%%%%%%%%%%%%%%%%%%%%%%%%%%%%%%%%%%%%%%%%%%%%%%%%%%%%%%%%%%%%%%%%type 2
\hspace{1cm} {\setlength{\unitlength}{0.6mm}
\begin{picture}(70,15)(0,-5)
\put(0,0){\line(1,0){13}}\put(20,0){\line(1,0){27}}\put(55,0){\line(1,0){15}}
\put(0,0){\circle*{1,3}}\put(0,0){\makebox(-2,-6)[c]{\tiny1}}
\put(13,0){\circle*{1,3}}\put(13,0){\makebox(-2,-6)[c]{\tiny $n_1$}}
\put(47,0){\circle*{1,3}}\put(47,0){\makebox(-2,6)[c]{\tiny $N_2$}}
\put(70,0){\circle*{1,3}}\put(70,0){\makebox(-2,-6)[c]{\tiny $n$}}
\put(20,0){\circle*{1,3}} \qbezier(5,0)(17.5,10)(33,0)
\put(5,0){\makebox(-2,-6)[c]{\tiny
$i_t$}}\put(33,0){\makebox(2,-6)[c]{\tiny $N_2+1-j_t$}}
\end{picture}
}\hspace{1cm}&\longrightarrow&\hspace{1cm}
{\setlength{\unitlength}{0.6mm}
\begin{picture}(70,15)(0,-5)
\put(0,0){\line(1,0){13}}\put(20,0){\line(1,0){27}}\put(55,0){\line(1,0){15}}
\put(0,0){\circle*{1,3}}\put(0,0){\makebox(-2,-6)[c]{\tiny1}}
\put(13,0){\circle*{1,3}}\put(13,0){\makebox(-2,-6)[c]{\tiny $n_2$}}
\put(47,0){\circle*{1,3}}\put(47,0){\makebox(-2,6)[c]{\tiny $N_2$}}
\put(70,0){\circle*{1,3}}\put(70,0){\makebox(-2,-6)[c]{\tiny $n$}}
\put(20,0){\circle*{1,3}} \qbezier(5,0)(16,10)(33,0)
\put(5,0){\makebox(-2,-6)[c]{\tiny
$j_t$}}\put(33,0){\makebox(2,-6)[c]{\tiny $N_2+1-i_t$}}
\end{picture}
}\hspace{1cm}\\
%%%%%%%%%%%%%%%%%%%%%%%%%%%%%%%%%%%%%%%%%%%%%%%%%%%%%%%%
\hspace{1cm} {\setlength{\unitlength}{0.6mm}
\begin{picture}(70,15)(0,-5)
\put(0,0){\line(1,0){13}}\put(20,0){\line(1,0){27}}\put(55,0){\line(1,0){15}}
\put(0,0){\circle*{1,3}}\put(0,0){\makebox(-2,-6)[c]{\tiny1}}
\put(13,0){\circle*{1,3}}\put(13,0){\makebox(-2,6)[c]{\tiny $n_1$}}
\put(47,0){\circle*{1,3}}\put(47,0){\makebox(-2,-6)[c]{\tiny $N_2$}}
\put(70,0){\circle*{1,3}}\put(70,0){\makebox(-2,-6)[c]{\tiny $n$}}
\put(20,0){\circle*{1,3}} \qbezier(5,0)(17.5,-10)(33,0)
\put(5,0){\makebox(-2,6)[c]{\tiny
$k_t$}}\put(33,0){\makebox(2,6)[c]{\tiny $N_2+1-\ell_t$}}
\end{picture}
}\hspace{1cm}&\longrightarrow&\hspace{1cm}
{\setlength{\unitlength}{0.6mm}
\begin{picture}(70,15)(0,-5)
\put(0,0){\line(1,0){13}}\put(20,0){\line(1,0){27}}\put(55,0){\line(1,0){15}}
\put(0,0){\circle*{1,3}}\put(0,0){\makebox(-2,-6)[c]{\tiny1}}
\put(13,0){\circle*{1,3}}\put(13,0){\makebox(-2,6)[c]{\tiny $n_2$}}
\put(47,0){\circle*{1,3}}\put(47,0){\makebox(-2,-6)[c]{\tiny $N_2$}}
\put(70,0){\circle*{1,3}}\put(70,0){\makebox(-2,-6)[c]{\tiny $n$}}
\put(20,0){\circle*{1,3}}\qbezier(5,0)(16,-10)(33,0)
\put(5,0){\makebox(-2,6)[c]{\tiny
$\ell_t$}}\put(33,0){\makebox(2,6)[c]{\tiny $N_2+1-k_t$}}
\end{picture}
}\hspace{1cm}\\[0.5cm]
\hline
%%%%%%%%%%%%%%%%%%%%%%%%%%%%%%%%%%%%%%%%%%%%%%%%%%%%%%%%%%%%%%%%%%%%%%%%%%%%%%%%%%%%%%%%%%%%%%%%%% type 3
\hspace{1cm}{\setlength{\unitlength}{0.6mm}
\begin{picture}(70,15)(0,-5)
\put(0,0){\line(1,0){35}}\put(45,0){\line(1,0){25}}
\put(0,0){\circle*{1,3}}\put(0,0){\makebox(-2,-6)[c]{\tiny 1}}
\put(35,0){\circle*{1,3}}\put(35,0){\makebox(-2,-6)[c]{\tiny $N_2$}}
\put(45,0){\circle*{1,3}}
\put(70,0){\circle*{1,3}}\put(70,0){\makebox(-2,-6)[c]{\tiny $n$}}
\qbezier(15,0)(35,10)(55,0) \put(15,0){\makebox(-2,-6)[c]{\tiny
$c_j$}}\put(55,0){\makebox(2,-6)[c]{\tiny $r_{\alpha(j)}$}}
\end{picture}
}& \longrightarrow&\hspace{1cm}{\setlength{\unitlength}{0.6mm}
\begin{picture}(70,15)(0,-5)
\put(0,0){\line(1,0){35}}\put(45,0){\line(1,0){25}}
\put(0,0){\circle*{1,3}}\put(0,0){\makebox(-2,-6)[c]{\tiny 1}}
\put(35,0){\circle*{1,3}}\put(35,0){\makebox(-2,-6)[c]{\tiny $N_2$}}
\put(45,0){\circle*{1,3}}
\put(70,0){\circle*{1,3}}\put(70,0){\makebox(-2,-6)[c]{\tiny $n$}}
\qbezier(15,0)(35,10)(55,0)\put(15,0){\makebox(-2,-6)[c]{\tiny
$e_j$}}\put(55,0){\makebox(2,-6)[c]{\tiny $r_{\alpha(j)}$}}
\end{picture}
}\\
%%%%%%%%%%%%%%%%%%%%%%%%%%%%%%%%%%%%%%%%%%%%%%%%%%%%%%%%type 6
\hspace{1cm}{\setlength{\unitlength}{0.6mm}
\begin{picture}(70,15)(0,-5)
\put(0,0){\line(1,0){35}}\put(45,0){\line(1,0){25}}
\put(0,0){\circle*{1,3}}\put(0,0){\makebox(-2,6)[c]{\tiny 1}}
\put(35,0){\circle*{1,3}}\put(35,0){\makebox(-2,6)[c]{\tiny $N_2$}}
\put(45,0){\circle*{1,3}}
\put(70,0){\circle*{1,3}}\put(70,0){\makebox(-2,6)[c]{\tiny $n$}}
\qbezier(15,0)(35,-10)(55,0)\put(15,0){\makebox(-2,6)[c]{\tiny
$e_j$}}\put(55,0){\makebox(2,6)[c]{\tiny $s_{\beta(j)}$}}
\end{picture}
}& \longrightarrow&\hspace{1cm}{\setlength{\unitlength}{0.6mm}
\begin{picture}(70,15)(0,-5)
\put(0,0){\line(1,0){35}}\put(45,0){\line(1,0){25}}
\put(0,0){\circle*{1,3}}\put(0,0){\makebox(-2,6)[c]{\tiny 1}}
\put(35,0){\circle*{1,3}}\put(35,0){\makebox(-2,6)[c]{\tiny $N_2$}}
\put(45,0){\circle*{1,3}}
\put(70,0){\circle*{1,3}}\put(70,0){\makebox(-2,6)[c]{\tiny $n$}}
\qbezier(15,0)(35,-10)(55,0) \put(15,0){\makebox(-2,6)[c]{\tiny
$f_j$}}\put(55,0){\makebox(2,6)[c]{\tiny $s_{\beta(j)}$}}
\end{picture}
}\\[0.5cm]
\hline
\end{array}
$$
\caption{The mapping
$\Gamma^{(n_1,n_2)}:\sigma\mapsto\sigma'$}\label{tab:descriptionGamma}
\end{table}

For example, if we consider the permutation in $\S_{15}^{\,(3,4)}$
whose diagram is given by
\begin{center}
{\setlength{\unitlength}{0.6mm}
\begin{picture}(150,40)(-10,10)
\put(-5,30){\line(1,0){20}}
\put(30,30){\line(1,0){30}}\put(75,30){\line(1,0){70}}
%%%%%%%%%%%%%%%%%% GRAPHES DE SIGMA
\put(-5,30){\circle*{1}}\put(-5,30){\makebox(-3,-4)[c]{\tiny 1}}
\put(5,30){\circle*{1}}\put(5,30){\makebox(-3,-4)[c]{\tiny 2}}
\put(15,30){\circle*{1}}\put(15,30){\makebox(-3,-4)[c]{\tiny 3}}
\put(30,30){\circle*{1}}\put(30,30){\makebox(-3,-4)[c]{\tiny 4}}
\put(40,30){\circle*{1}}\put(40,30){\makebox(-3,-4)[c]{\tiny 5}}
\put(50,30){\circle*{1}}\put(50,30){\makebox(-3,-4)[c]{\tiny 6}}
\put(60,30){\circle*{1}}\put(60,30){\makebox(-3,-4)[c]{\tiny 7}}
\put(75,30){\circle*{1}}\put(75,30){\makebox(-3,-4)[c]{\tiny 8}}
\put(85,30){\circle*{1}}\put(85,30){\makebox(-3,-4)[c]{\tiny 9}}
\put(95,30){\circle*{1}}\put(95,30){\makebox(-3,-4)[c]{\tiny 10}}
\put(105,30){\circle*{1}}\put(105,30){\makebox(-3,-4)[c]{\tiny
11}}\put(115,30){\circle*{1}}\put(115,30){\makebox(-3,-4)[c]{\tiny
12}}\put(125,30){\circle*{1}}\put(125,30){\makebox(-3,-4)[c]{\tiny
13}}\put(135,30){\circle*{1}}\put(135,30){\makebox(-3,-4)[c]{\tiny
14}}\put(145,30){\circle*{1}}\put(145,30){\makebox(-3,-4)[c]{\tiny
15}}
%%%%%%%%%%%%%%%%% EXCEDANCES DE SIGMA
\red{\qbezier(5,30)(17.5,37)(30,30)\qbezier(15,30)(32.5,42)(50,30)}\blue{\qbezier(-5,30)(82,65)(145,30)
\qbezier(50,30)(62.5,39)(75,30) \qbezier(30,30)(85,50)(125,30)}
\qbezier(75,30)(100,53)(135,30) \qbezier(105,30)(110,38)(115,30)
%%%%%%%%%%%%%%%% NONEXCEDANCES DE SIGMA
\red{\qbezier(5,30)(32,10)(60,30)\qbezier(15,30)(27.5,20)(40,30)}
\qbezier(85,30)(105,0)(135,30)\qbezier(95,30)(110,10)(125,30)\qbezier(105,30)(125,10)(145,30)
\blue{\qbezier(-5,30)(50,2)(85,30)\qbezier(60,30)(75,17)(95,30)\qbezier(40,30)(85,5)(115,30)}
\end{picture}
}\end{center}

then the diagram of $\Gamma^{(n_1,n_2)}(\sigma)$ is given by
\begin{center}
{\setlength{\unitlength}{0.6mm}
\begin{picture}(150,40)(-5,10)
\put(-5,30){\line(1,0){30}}
\put(40,30){\line(1,0){20}}\put(75,30){\line(1,0){70}}
%%%%%%%%%%%%%%%%%% GRAPHES DE SIGMA
\put(-5,30){\circle*{1}}\put(-5,30){\makebox(-3,-4)[c]{\tiny 1}}
\put(5,30){\circle*{1}}\put(5,30){\makebox(-3,-4)[c]{\tiny 2}}
\put(15,30){\circle*{1}}\put(15,30){\makebox(-3,-4)[c]{\tiny 3}}
\put(25,30){\circle*{1}}\put(25,30){\makebox(-3,-4)[c]{\tiny 4}}
\put(40,30){\circle*{1}}\put(40,30){\makebox(-3,-4)[c]{\tiny 5}}
\put(50,30){\circle*{1}}\put(50,30){\makebox(-3,-4)[c]{\tiny 6}}
\put(60,30){\circle*{1}}\put(60,30){\makebox(-3,-4)[c]{\tiny 7}}
\put(75,30){\circle*{1}}\put(75,30){\makebox(-3,-4)[c]{\tiny 8}}
\put(85,30){\circle*{1}}\put(85,30){\makebox(-3,-4)[c]{\tiny 9}}
\put(95,30){\circle*{1}}\put(95,30){\makebox(-3,-4)[c]{\tiny 10}}
\put(105,30){\circle*{1}}\put(105,30){\makebox(-3,-4)[c]{\tiny
11}}\put(115,30){\circle*{1}}\put(115,30){\makebox(-3,-4)[c]{\tiny
12}}\put(125,30){\circle*{1}}\put(125,30){\makebox(-3,-4)[c]{\tiny
13}}\put(135,30){\circle*{1}}\put(135,30){\makebox(-3,-4)[c]{\tiny
14}}\put(145,30){\circle*{1}}\put(145,30){\makebox(-3,-4)[c]{\tiny
15}}
%%%%%%%%%%%%%%%%% EXCEDANCES DE SIGMA
\red{\qbezier(25,30)(37.5,42)(50,30)\qbezier(5,30)(22.5,42)(40,30)}\blue{\qbezier(-5,30)(82,65)(145,30)
\qbezier(60,30)(67.5,39)(75,30)\qbezier(15,30)(67.5,55)(125,30)}
\qbezier(75,30)(100,53)(135,30) \qbezier(105,30)(110,38)(115,30)
%%%%%%%%%%%%%%%% NONEXCEDANCES DE SIGMA
\red{\qbezier(15,30)(27.5,20)(40,30)\qbezier(-5,30)(22.5,14)(50,30)}\blue{\qbezier(5,30)(55,2)(85,30)
\qbezier(60,30)(75,17)(95,30)\qbezier(25,30)(77,5)(115,30)}
\qbezier(85,30)(105,0)(135,30)\qbezier(95,30)(110,10)(125,30)\qbezier(105,30)(125,10)(145,30)
\end{picture}
}\end{center}

It is not hard to check that $\Gamma^{(n_1,n_2)}: {\S_{n}^{\,(n_1,n_2)}}\to {\S_{n}^{\,(n_2,n_1)}}$  is well defined
and bijective because 
each step of the construction is reversible,
Actually we can prove,
the details are left to the reader, that
$(\Gamma^{(n_1,n_2)})^{-1}=\Gamma^{(n_2,n_1)}$.

\begin{prop}\label{prop:Gamma}
For each positive integers $n_1,n_2,n$, with $N_2\leq n$, the map
$\Gamma^{(n_1,n_2)}$ is a bijection from ${\S_{n}^{\,(n_1,n_2)}}$ to
${\S_{n}^{\,(n_2,n_1)}}$ such that for each
$\sigma\in\S_{n}^{\,(n_1,n_2)}$,  we have
\begin{align}\label{eq:propGamma}
(\exc,\cro)\Gamma^{(n_1,n_2)}(\sigma)=(\exc,\cro)\sigma.
\end{align}
\end{prop}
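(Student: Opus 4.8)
\emph{Plan of proof.} Since bijectivity of $\Gamma^{(n_1,n_2)}$ has already been reduced to the reversibility of its three steps together with $(\Gamma^{(n_1,n_2)})^{-1}=\Gamma^{(n_2,n_1)}$, the task is to establish \eqref{eq:propGamma}; write $\sigma'=\Gamma^{(n_1,n_2)}(\sigma)$. That $\wex(\sigma')=\wex(\sigma)$ is read off from Table~\ref{tab:descriptionGamma}: each arc of $\sigma$ is mapped to an arc on the same side of the axis (an arc $i\to\sigma(i)$ with $i\le\sigma(i)$ to such an arc, likewise for $i>\sigma(i)$), and the fixed points of $\sigma$, all of which lie in $[N_2+1,n]$, are preserved by step~(1).

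For $\cro$ I would split the arcs of $\sigma$ into type~$1$ (both endpoints in $[N_2+1,n]$; fixed by step~(1)), type~$2$ (both endpoints in $[1,N_2]$; moved by step~(2)) and type~$3$ (one endpoint in each range; moved by step~(3)), and then count crossings by the unordered pair of types of the two arcs involved, recalling that the two arcs of a crossing always lie on the same side of the axis. The five ``easy'' type-pairs $(1,1)$, $(1,2)$, $(1,3)$, $(2,2)$, $(3,3)$ I would settle directly from Table~\ref{tab:descriptionGamma}: two type-$1$ arcs keep their crossing unchanged; a type-$1$ arc and a type-$2$ arc never cross (their endpoint intervals sit in the disjoint ranges $[N_2+1,n]$ and $[1,N_2]$), so both sides contribute $0$; whether a type-$1$ arc and a type-$3$ arc cross depends only on the type-$1$ arc and the \emph{large} endpoint of the type-$3$ arc, which step~(3) fixes; a crossing between two type-$2$ arcs is controlled by a condition symmetric in the pair of ``source indices'' and the pair of ``reflected-target indices'', hence invariant under the interchange step~(2) performs; and a crossing between two type-$3$ arcs above (resp.\ below) the axis is controlled by the pattern of the permutation $\alpha$ (resp.\ of $\beta$), with $\alpha$, $\beta$ and all relevant order relations preserved --- below the axis one invokes $\inv(\beta)=\inv(\beta^{-1})$.

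This leaves the one genuinely delicate type-pair, $(2,3)$. A short case check shows that above the axis a type-$2$ arc and a type-$3$ arc cross exactly when the \emph{small} endpoint $c$ of the type-$3$ arc lies strictly to the right of the source and weakly to the left of the target of the type-$2$ arc, with the analogous (two-sided strict) statement below the axis for the small endpoint $d$. Under $\Gamma^{(n_1,n_2)}$ the type-$2$ arcs are reflected by $x\mapsto N_2+1-x$ with source and target exchanged, while the small endpoints of the type-$3$ arcs are redistributed order-preservingly from the set $C$ (resp.\ $D$) onto $E$ (resp.\ $F$); crucially $E$ equals the reflection of $C$ only when $E=F$, so these $(2,3)$-crossings need not correspond side by side. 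One must therefore prove the pooled identity
\begin{align*}
&\sum_{t,\,c\in C}\chi(i_t<c\le N_2+1-j_t)+\sum_{s,\,d\in D}\chi(k_s<d<N_2+1-\ell_s)\\
&\qquad=\sum_{t,\,e\in E}\chi(j_t<e\le N_2+1-i_t)+\sum_{s,\,f\in F}\chi(\ell_s<f<N_2+1-k_s).
\end{align*}
I expect this to be the main obstacle. The plan is to argue as in the auxiliary lemma $|L_3(\sigma)|=|L_4(\sigma)|$ used in the proof of Lemma~\ref{lem:1}: rewrite each of the four sums as a sum over the positions of $[1,N_2]$ of the number of type-$2$ arcs of the appropriate orientation straddling that position; exploit the $\sigma\leftrightarrow\sigma^{-1}$ symmetry (the analogue of $|A_i(\sigma)|=|A_i(\sigma^{-1})|$); and telescope the difference to $0$ using the descriptions of $C,D$ (resp.\ $E,F$) as the complements in $[1,N_2]$ of the step-$2$ source/target sets of $\sigma$ (resp.\ of $\sigma'$). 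The only real difficulty is the bookkeeping of which point plays which role; conceptually the identity says that the reflection in step~(2) and the order-preserving spreading in step~(3) are adjoint in a way that leaves the total number of crossings unchanged.
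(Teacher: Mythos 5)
Your overall scheme is sound and in fact coincides with the paper's own: $\exc$ is preserved by inspection of the definition, and your classification of crossings by the (unordered) pair of types of the two arcs involved is exactly the paper's decomposition $\cro(\gamma)=\sum_{i=1}^{5}|G_i^{(n_1,n_2)}(\gamma)|$, your pair $(1,2)$ being empty. Your treatment of the pairs $(1,1)$, $(2,2)$, $(1,3)$, $(3,3)$ agrees with the argument the paper reads off Table~\ref{tab:effet Gamma sur cr}, and your remark that the lower $(3,3)$ case requires $\inv(\beta)=\inv(\beta^{-1})$ (because $\Gamma^{(n_1,n_2)}$ attaches the targets $f_{\beta(i)}$ to the sources $s_i$, thereby inverting the pattern below the axis) is a correct point that the paper passes over silently. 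You have also isolated the right hard case: your characterization of the $(2,3)$-crossings is correct, and your pooled identity is precisely the remaining statement $|G_5^{(n_1,n_2)}(\sigma)|=|G_5^{(n_2,n_1)}(\sigma')|$.

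The gap is that this identity is exactly where the substance of the proposition lies, and you do not prove it: you give a plan (``argue as in the lemma $|L_3(\sigma)|=|L_4(\sigma)|$ \dots the only real difficulty is the bookkeeping'') together with the expectation that it works. The paper devotes a separate result to this point, Lemma~\ref{lem:lemme pour lemme 2}, which evaluates the pooled count in closed form, \eqref{eq:lemG5}: it depends only on the endpoints of the arcs contained in $[1,N_2]$, each upper arc contributing its length $j_r-i_r$, each lower arc contributing $\ell_r-k_r-1$, minus ${p+q\choose 2}$. Since step (2) of $\Gamma^{(n_1,n_2)}$ simply reflects these arcs by $x\mapsto N_2+1-x$ (exchanging source and target while keeping the side of the axis), this expression takes the same value on $\sigma$ and $\sigma'$, which is the required invariance. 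The proof of \eqref{eq:lemG5} is itself a nontrivial count, using that inside $[1,N_2]$ the points whose image (resp.\ preimage) lies beyond $N_2$ are exactly the small endpoints in $C$ (resp.\ $D$), plus the cancellations producing ${p\choose2}$, ${q\choose2}$ and $pq$; so the ``bookkeeping'' you defer is the actual content of the proposition. Your telescoping plan is plausible and is morally the same computation, but until it is carried out (or an explicit invariant formula such as \eqref{eq:lemG5} is established) the key step is asserted rather than proved, and the proof is incomplete.
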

We first derive  Lemma~\ref{lem:2} from the above proposition. Let
$n=n_1+n_2+\cdots+n_k$. Then $\D(n_1,n_2,\ldots, n_k)\subseteq
{\S^{\,(n_1,n_2)}_{n}}$. By definition of $\Gamma^{(n_1,n_2)}$, for
any $\sigma\in\S_{n}^{\,(n_1,n_2)}$ and $i>N_2$ satisfying
$\sigma(i)>N_2$, we have
$i-\Gamma^{(n_1,n_2)}(\sigma)(i)=i-\sigma(i)$,
so $\Gamma^{(n_1,n_2)}(\D(n_1,n_2,\ldots, n_k))\subseteq
\D(n_2,n_3,\ldots,n_k,n_1)$. Since the cardinality of
$\D(n_1,n_2,\ldots, n_k)$ doesn't depend on  the order of the $n_i$'s
and $\Gamma^{(n_1,n_2)}$ is a bijection, we have
$$\Gamma^{(n_1,n_2)}(\D(n_1,n_2,\ldots,
n_k))=\D(n_2,n_3,\ldots,n_k,n_1).$$
Lemma~\ref{lem:2} then follows
from \eqref{eq:propGamma}.

\begin{table}[h]
$$
\begin{array}{|c|c|c|}
\hline
 i&G_i^{(n_1,n_2)}(\gamma)& G_i^{(n_2,n_1)}(\gamma)\\
 \hline
%%%%%%%%%%%%%%%%%%%%%%%%%%%%%%%%%%%%%%%%%%%%%%%%%%%%%%%%%%%%%%%%%%%%%%%%%%% type 1
1& \hspace{1cm}{
{\setlength{\unitlength}{0.6mm}\begin{picture}(70,15)(0,-5)
\put(0,0){\line(1,0){25}}\put(35,0){\line(1,0){35}}
\put(0,0){\circle*{1,3}}\put(0,0){\makebox(-2,-6)[c]{\small 1}}
\put(25,0){\circle*{1,3}}\put(25,0){\makebox(-2,-6)[c]{\tiny $N_2$}}
\put(70,0){\circle*{1,3}}\put(70,0){\makebox(-2,-6)[c]{\small $n$}}
\qbezier(40,0)(50,10)(60,0) \qbezier(45,0)(55,10)(65,0)
\end{picture}}}\hspace{1cm}&{
{\setlength{\unitlength}{0.6mm}\begin{picture}(70,15)(0,-5)
\put(0,0){\line(1,0){25}}\put(35,0){\line(1,0){35}}
\put(0,0){\circle*{1,3}}\put(0,0){\makebox(-2,-6)[c]{\small 1}}
\put(25,0){\circle*{1,3}}\put(25,0){\makebox(-2,-6)[c]{\tiny $N_2$}}
\put(70,0){\circle*{1,3}}\put(70,0){\makebox(-2,-6)[c]{\small $n$}}
\qbezier(40,0)(50,10)(60,0) \qbezier(45,0)(55,10)(65,0)
\end{picture}}}\\
%%%%%%%%%%%%%%%%%%%%%%%%%%%%%%%%%%%%% b
&{\hspace{1cm} {\setlength{\unitlength}{0.6mm}
\begin{picture}(70,15)(0,-5)
\put(0,0){\line(1,0){25}}\put(35,0){\line(1,0){35}}
\put(0,0){\circle*{1,3}}\put(0,0){\makebox(-2,-6)[c]{\small 1}}
\put(25,0){\circle*{1,3}}\put(25,0){\makebox(-2,-6)[c]{\tiny $N_2$}}
\put(70,0){\circle*{1,3}}\put(70,0){\makebox(-2,-6)[c]{\small $n$}}
\qbezier(40,0)(45,10)(50,0) \qbezier(50,0)(55,10)(60,0)
\end{picture}
}\hspace{1cm}}&{\hspace{1cm} {\setlength{\unitlength}{0.6mm}
\begin{picture}(70,15)(0,-5)
\put(0,0){\line(1,0){25}}\put(35,0){\line(1,0){35}}
\put(0,0){\circle*{1,3}}\put(0,0){\makebox(-2,-6)[c]{\small 1}}
\put(25,0){\circle*{1,3}}\put(25,0){\makebox(-2,-6)[c]{\tiny $N_2$}}
\put(70,0){\circle*{1,3}}\put(70,0){\makebox(-2,-6)[c]{\small $n$}}
\qbezier(40,0)(45,10)(50,0) \qbezier(50,0)(55,10)(60,0)
\end{picture}
}\hspace{1cm}}\\
%%%%%%%%%%%%%%%%%%%%%%%%%%%%%%%%%%%%%%%%%%%%%%%%%% c
&{\hspace{1cm}{\setlength{\unitlength}{0.6mm}
\begin{picture}(70,15)(0,-7)
\put(0,0){\line(1,0){25}}\put(35,0){\line(1,0){35}}
\put(0,0){\circle*{1,3}}\put(0,0){\makebox(-2,-6)[c]{\small 1}}
\put(25,0){\circle*{1,3}}\put(25,0){\makebox(-2,6)[c]{\tiny $N_2$}}
\put(70,0){\circle*{1,3}}\put(70,0){\makebox(-2,-6)[c]{\small $n$}}
\qbezier(40,0)(50,-10)(60,0) \qbezier(45,0)(55,-10)(65,0)
\end{picture}
}\hspace{1cm}}&{\hspace{1cm}{\setlength{\unitlength}{0.6mm}
\begin{picture}(70,15)(0,-7)
\put(0,0){\line(1,0){25}}\put(35,0){\line(1,0){35}}
\put(0,0){\circle*{1,3}}\put(0,0){\makebox(-2,-6)[c]{\small 1}}
\put(25,0){\circle*{1,3}}\put(25,0){\makebox(-2,6)[c]{\tiny $N_2$}}
\put(70,0){\circle*{1,3}}\put(70,0){\makebox(-2,-6)[c]{\small $n$}}
\qbezier(40,0)(50,-10)(60,0) \qbezier(45,0)(55,-10)(65,0)
\end{picture}
}\hspace{1cm}}\\
\hline
%%%%%%%%%%%%%%%%%%%%%%%%%%%%%%%%%%%%%%%%%%%%%%%%%%%%%%%%type 2
2&\hspace{1cm} {\setlength{\unitlength}{0.6mm}
\begin{picture}(70,15)(0,-5)
\put(0,0){\line(1,0){13}}\put(35,0){\line(-1,0){13}}\put(45,0){\line(1,0){25}}
\put(0,0){\circle*{1,3}}\put(0,0){\makebox(-2,-6)[c]{\small 1}}
\put(35,0){\circle*{1,3}}\put(37,0){\makebox(-2,-6)[c]{\tiny $N_2$}}
\put(70,0){\circle*{1,3}}\put(70,0){\makebox(-2,-6)[c]{\small $n$}}
\put(13,0){\circle*{1,3}}\put(13,0){\makebox(-2,-6)[c]{\tiny $n_1$}}
\qbezier(5,0)(15,10)(25,0) \qbezier(10,0)(20,10)(30,0)
\end{picture}
}\hspace{1cm}&\hspace{1cm} {\setlength{\unitlength}{0.6mm}
\begin{picture}(70,15)(0,-5)
\put(0,0){\line(1,0){13}}\put(35,0){\line(-1,0){13}}\put(45,0){\line(1,0){25}}
\put(0,0){\circle*{1,3}}\put(0,0){\makebox(-2,-6)[c]{\small 1}}
\put(35,0){\circle*{1,3}}\put(37,0){\makebox(-2,-6)[c]{\tiny $N_2$}}
\put(70,0){\circle*{1,3}}\put(70,0){\makebox(-2,-6)[c]{\small $n$}}
\put(13,0){\circle*{1,3}}\put(13,0){\makebox(-2,-6)[c]{\tiny $n_2$}}
\qbezier(5,0)(15,10)(25,0) \qbezier(10,0)(20,10)(30,0)
\end{picture}
}\hspace{1cm}\\
%%%%%%%%%%%%%%%%%%%%%%%%%%%%%%%%%%%%%%%%%%%%%%%%%%% b
&\hspace{1cm}{\setlength{\unitlength}{0.6mm}
\begin{picture}(70,15)(0,-5)
\put(0,0){\line(1,0){13}}\put(35,0){\line(-1,0){13}}\put(45,0){\line(1,0){25}}
\put(0,0){\circle*{1,3}}\put(0,0){\makebox(-2,-6)[c]{\small 1}}
\put(35,0){\circle*{1,3}}\put(37,0){\makebox(-2,6)[c]{\tiny $N_2$}}
\put(70,0){\circle*{1,3}}\put(70,0){\makebox(-2,-6)[c]{\small $n$}}
\put(13,0){\circle*{1,3}}\put(13,0){\makebox(-2,6)[c]{\tiny $n_1$}}
\qbezier(5,0)(15,-10)(25,0) \qbezier(10,0)(20,-10)(30,0)
\end{picture}
}\hspace{1cm}&\hspace{1cm}{\setlength{\unitlength}{0.6mm}
\begin{picture}(70,10)(0,-7)
\put(0,0){\line(1,0){13}}\put(35,0){\line(-1,0){13}}\put(45,0){\line(1,0){25}}
\put(0,0){\circle*{1,3}}\put(0,0){\makebox(-2,-6)[c]{\small 1}}
\put(35,0){\circle*{1,3}}\put(37,0){\makebox(-2,6)[c]{\tiny $N_2$}}
\put(70,0){\circle*{1,3}}\put(70,0){\makebox(-2,-6)[c]{\small $n$}}
\put(13,0){\circle*{1,3}}\put(13,0){\makebox(-2,6)[c]{\tiny $n_2$}}
\qbezier(5,0)(15,-10)(25,0) \qbezier(10,0)(20,-10)(30,0)
\end{picture}
}\hspace{1cm}\\
\hline
%%%%%%%%%%%%%%%%%%%%%%%%%%%%%%%%%%%%%%%%%%%%%%%%%%%%%%%%type 3
3&\hspace{1cm} {{\setlength{\unitlength}{0.6mm}
\begin{picture}(70,15)(0,-5)
\put(0,0){\line(1,0){25}}\put(35,0){\line(1,0){35}}
\put(0,0){\circle*{1,3}}\put(0,0){\makebox(-2,-6)[c]{\small 1}}
\put(25,0){\circle*{1,3}}\put(25,0){\makebox(-2,-6)[c]{\tiny $N_2$}}
\put(70,0){\circle*{1,3}}\put(70,0){\makebox(-2,-6)[c]{\small $n$}}
\qbezier(5,0)(30,10)(45,0) \qbezier(20,0)(40,10)(60,0)
\end{picture}
}\hspace{1cm}}& \hspace{1cm} {{\setlength{\unitlength}{0.6mm}
\begin{picture}(70,15)(0,-5)
\put(0,0){\line(1,0){25}}\put(35,0){\line(1,0){35}}
\put(0,0){\circle*{1,3}}\put(0,0){\makebox(-2,-6)[c]{\small 1}}
\put(25,0){\circle*{1,3}}\put(25,0){\makebox(-2,-6)[c]{\tiny $N_2$}}
\put(70,0){\circle*{1,3}}\put(70,0){\makebox(-2,-6)[c]{\small $n$}}
\qbezier(5,0)(30,10)(45,0) \qbezier(20,0)(40,10)(60,0)
\end{picture}
}\hspace{1cm}}\\
%%%%%%%%%%%%%%%%%%%%%%%%%%%%%%%%%%%%%%%%%%%%% b
&\hspace{1cm}{{\setlength{\unitlength}{0.6mm}
\begin{picture}(70,15)(0,-5)
\put(0,0){\line(1,0){35}}\put(45,0){\line(1,0){25}}
\put(0,0){\circle*{1,3}}\put(0,0){\makebox(-2,-6)[c]{\small 1}}
\put(35,0){\circle*{1,3}}\put(35,0){\makebox(-2,6)[c]{\tiny $N_2$}}
\put(70,0){\circle*{1,3}}\put(70,0){\makebox(-2,-6)[c]{\small $n$}}
\qbezier(10,0)(40,-10)(50,0) \qbezier(25,0)(45,-10)(65,0)
\end{picture}
}\hspace{1cm}} & \hspace{1cm}{{\setlength{\unitlength}{0.6mm}
\begin{picture}(70,15)(0,-5)
\put(0,0){\line(1,0){35}}\put(45,0){\line(1,0){25}}
\put(0,0){\circle*{1,3}}\put(0,0){\makebox(-2,-6)[c]{\small 1}}
\put(35,0){\circle*{1,3}}\put(35,0){\makebox(-2,6)[c]{\tiny $N_2$}}
\put(70,0){\circle*{1,3}}\put(70,0){\makebox(-2,-6)[c]{\small $n$}}
\qbezier(10,0)(40,-10)(50,0) \qbezier(25,0)(45,-10)(65,0)
\end{picture}
}\hspace{1cm}}\\
\hline
%%%%%%%%%%%%%%%%%%%%%%%%%%%%%%%%%%%%%%%%%%%%%%%%%%%%%%%%type 4
4&\hspace{1cm}{{\setlength{\unitlength}{0.6mm}
\begin{picture}(70,15)(0,-5)
\put(0,0){\line(1,0){25}}\put(35,0){\line(1,0){35}}
\put(0,0){\circle*{1,3}}\put(0,0){\makebox(-2,-6)[c]{\small 1}}
\put(25,0){\circle*{1,3}}\put(25,0){\makebox(-2,-6)[c]{\tiny $N_2$}}
\put(70,0){\circle*{1,3}}\put(70,0){\makebox(-2,-6)[c]{\small $n$}}
\qbezier(12.5,0)(37.5,10)(52.5,0) \qbezier(45,0)(52.5,10)(60,0)
\end{picture}
}\hspace{1cm}}& \hspace{1cm}{{\setlength{\unitlength}{0.6mm}
\begin{picture}(70,15)(0,-5)
\put(0,0){\line(1,0){25}}\put(35,0){\line(1,0){35}}
\put(0,0){\circle*{1,3}}\put(0,0){\makebox(-2,-6)[c]{\small 1}}
\put(25,0){\circle*{1,3}}\put(25,0){\makebox(-2,-6)[c]{\tiny $N_2$}}
\put(70,0){\circle*{1,3}}\put(70,0){\makebox(-2,-6)[c]{\small $n$}}
\qbezier(12.5,0)(37.5,10)(52.5,0) \qbezier(45,0)(52.5,10)(60,0)
\end{picture}
}\hspace{1cm}}\\
%%%%%%%%%%%%%%%%%%%%%%%%%%%%%%%%%%%%% b
&\hspace{1cm}{{\setlength{\unitlength}{0.6mm}
\begin{picture}(70,15)(0,-5)
\put(0,0){\line(1,0){25}}\put(35,0){\line(1,0){35}}
\put(0,0){\circle*{1,3}}\put(0,0){\makebox(-2,-6)[c]{\small 1}}
\put(25,0){\circle*{1,3}}\put(25,0){\makebox(-2,-6)[c]{\tiny $N_2$}}
\put(70,0){\circle*{1,3}}\put(70,0){\makebox(-2,-6)[c]{\small $n$}}
\qbezier(12.5,0)(37.5,10)(52.5,0) \qbezier(52.5,0)(57.5,10)(62.5,0)
\end{picture}
}\hspace{1cm}} &\hspace{1cm}{{\setlength{\unitlength}{0.6mm}
\begin{picture}(70,15)(0,-5)
\put(0,0){\line(1,0){25}}\put(35,0){\line(1,0){35}}
\put(0,0){\circle*{1,3}}\put(0,0){\makebox(-2,-6)[c]{\small 1}}
\put(25,0){\circle*{1,3}}\put(25,0){\makebox(-2,-6)[c]{\tiny $N_2$}}
\put(70,0){\circle*{1,3}}\put(70,0){\makebox(-2,-6)[c]{\small $n$}}
\qbezier(12.5,0)(37.5,10)(52.5,0) \qbezier(52.5,0)(57.5,10)(62.5,0)
\end{picture}
}\hspace{1cm}}\\
%%%%%%%%%%%%%%%%%%%%%%%%%%%%%%%%%%%%%%%%%%%%%%%%% c
&\hspace{1cm}{{\setlength{\unitlength}{0.6mm}
\begin{picture}(70,15)(0,-5)
\put(0,0){\line(1,0){25}}\put(35,0){\line(1,0){35}}
\put(0,0){\circle*{1,3}}\put(0,0){\makebox(-2,-6)[c]{\small 1}}
\put(25,0){\circle*{1,3}}\put(25,0){\makebox(-2,6)[c]{\tiny $N_2$}}
\put(70,0){\circle*{1,3}}\put(70,0){\makebox(-2,-6)[c]{\small $n$}}
\qbezier(12.5,0)(37.5,-10)(52.5,0) \qbezier(45,0)(52.5,-10)(60,0)
\end{picture}
}\hspace{1cm}}& \hspace{1cm}{{\setlength{\unitlength}{0.6mm}
\begin{picture}(70,15)(0,-5)
\put(0,0){\line(1,0){25}}\put(35,0){\line(1,0){35}}
\put(0,0){\circle*{1,3}}\put(0,0){\makebox(-2,-6)[c]{\small 1}}
\put(25,0){\circle*{1,3}}\put(25,0){\makebox(-2,6)[c]{\tiny $N_2$}}
\put(70,0){\circle*{1,3}}\put(70,0){\makebox(-2,-6)[c]{\small $n$}}
\qbezier(12.5,0)(37.5,-10)(52.5,0) \qbezier(45,0)(52.5,-10)(60,0)
\end{picture}
}\hspace{1cm}}\\
\hline
%%%%%%%%%%%%%%%%%%%%%%%%%%%%%%%%%%%%%%%%%%%%%%%%%%%%%%%%type 5
5&\hspace{1cm}{\setlength{\unitlength}{0.6mm}
\begin{picture}(70,15)(0,-5)
\put(0,0){\line(1,0){13}} \put(35,0){\line(-1,0){13}}
\put(45,0){\line(1,0){25}}
\put(0,0){\circle*{1,3}}\put(0,0){\makebox(-2,-6)[c]{\small 1}}
\put(35,0){\circle*{1,3}}\put(35,0){\makebox(-2,-6)[c]{\tiny $N_2$}}
\put(70,0){\circle*{1,3}}\put(70,0){\makebox(-2,-6)[c]{\small $n$}}
\put(13,0){\circle*{1,3}}\put(13,0){\makebox(-2,-6)[c]{\tiny $n_1$}}
\qbezier(10,0)(17.5,10)(25,0) \qbezier(17.5,0)(37.5,10)(57.5,0)
\end{picture}
}\hspace{1cm}&  \hspace{1cm}{\setlength{\unitlength}{0.6mm}
\begin{picture}(70,10)(0,-5)
\put(0,0){\line(1,0){13}} \put(35,0){\line(-1,0){13}}
\put(45,0){\line(1,0){25}}
\put(0,0){\circle*{1,3}}\put(0,0){\makebox(-2,-6)[c]{\small 1}}
\put(35,0){\circle*{1,3}}\put(35,0){\makebox(-2,-6)[c]{\tiny $N_2$}}
\put(70,0){\circle*{1,3}}\put(70,0){\makebox(-2,-6)[c]{\small $n$}}
\put(13,0){\circle*{1,3}}\put(13,0){\makebox(-2,-6)[c]{\tiny $n_2$}}
\qbezier(10,0)(17.5,10)(25,0) \qbezier(17.5,0)(37.5,10)(57.5,0)
\end{picture}
}\hspace{1cm}\\
%%%%%%%%%%%%%%%%%%%%%%%%%%%%%%%%%%%%% b
& \hspace{1cm}{\setlength{\unitlength}{0.6mm}
\begin{picture}(70,15)(0,-5)
\put(0,0){\line(1,0){13}}\put(35,0){\line(-1,0){13}}\put(45,0){\line(1,0){25}}
\put(0,0){\circle*{1,3}}\put(0,0){\makebox(-2,-6)[c]{\small 1}}
\put(35,0){\circle*{1,3}}\put(35,0){\makebox(-2,-6)[c]{\tiny $N_2$}}
\put(70,0){\circle*{1,3}}\put(70,0){\makebox(-2,-6)[c]{\small $n$}}
\put(13,0){\circle*{1,3}}\put(13,0){\makebox(-2,-6)[c]{\tiny $n_1$}}
\qbezier(10,0)(17.5,10)(25,0) \qbezier(25,0)(40,10)(57.5,0)
\end{picture}
}\hspace{1cm} &\hspace{1cm}{\setlength{\unitlength}{0.6mm}
\begin{picture}(70,15)(0,-5)
\put(0,0){\line(1,0){13}}\put(35,0){\line(-1,0){13}}\put(45,0){\line(1,0){25}}
\put(0,0){\circle*{1,3}}\put(0,0){\makebox(-2,-6)[c]{\small 1}}
\put(35,0){\circle*{1,3}}\put(35,0){\makebox(-2,-6)[c]{\tiny $N_2$}}
\put(70,0){\circle*{1,3}}\put(70,0){\makebox(-2,-6)[c]{\small $n$}}
\put(13,0){\circle*{1,3}}\put(13,0){\makebox(-2,-6)[c]{\tiny $n_2$}}
\qbezier(10,0)(17.5,10)(25,0) \qbezier(25,0)(40,10)(57.5,0)
\end{picture}
}\hspace{1cm}\\
%%%%%%%%%%%%%%%%%%%%%%%%%%%%%%%%% c
& \hspace{1cm}{\setlength{\unitlength}{0.6mm}
\begin{picture}(70,15)(0,-5)
\put(0,0){\line(1,0){13}}\put(35,0){\line(-1,0){13}}\put(45,0){\line(1,0){25}}
\put(0,0){\circle*{1,3}}\put(0,0){\makebox(-2,-6)[c]{\small 1}}
\put(35,0){\circle*{1,3}}\put(35,0){\makebox(-2,6)[c]{\tiny $N_2$}}
\put(70,0){\circle*{1,3}}\put(70,0){\makebox(-2,-6)[c]{\small $n$}}
\put(13,0){\circle*{1,3}}\put(13,0){\makebox(-2,6)[c]{\tiny $n_1$}}
\qbezier(10,0)(17.5,-10)(25,0) \qbezier(17.5,0)(37.5,-10)(57.5,0)
\end{picture}
}\hspace{1cm}& \hspace{1cm}{\setlength{\unitlength}{0.6mm}
\begin{picture}(70,15)(0,-5)
\put(0,0){\line(1,0){13}}\put(35,0){\line(-1,0){13}}\put(45,0){\line(1,0){25}}
\put(0,0){\circle*{1,3}}\put(0,0){\makebox(-2,-6)[c]{\small 1}}
\put(35,0){\circle*{1,3}}\put(35,0){\makebox(-2,6)[c]{\tiny $N_2$}}
\put(70,0){\circle*{1,3}}\put(70,0){\makebox(-2,-6)[c]{\small $n$}}
\put(13,0){\circle*{1,3}}\put(13,0){\makebox(-2,6)[c]{\tiny $n_2$}}
\qbezier(10,0)(17.5,-10)(25,0) \qbezier(17.5,0)(37.5,-10)(57.5,0)
\end{picture}
}\hspace{1cm}\\
\hline
\end{array}
$$
\caption{Forms of the crossings in $G^{(n_1,n_2)}_i(\gamma)$  and
$G^{(n_2,n_1)}_i(\gamma)$.}\label{tab:forme-cr-2}
\end{table}

\noindent{\bf Proof of Proposition~\ref{prop:Gamma}.} It was shown above
that $\Gamma^{(n_1,n_2)}$ is bijective. Let
$\sigma\in{\S_{n}^{\,(n_1,n_2)}}$ and
$\sigma':=\Gamma^{(n_1,n_2)}(\sigma)$. The equality
$\exc(\sigma')=\exc(\sigma)$ is an immediate consequence of the
definition of $\Gamma^{(n_1,n_2)}$. It then remains to prove that
$\cro(\sigma')=\cro(\sigma)$. The idea is the same as  for the
proof of Eq.~\eqref{lem:1}. We first decompose the number of
crossings of $\sigma$ and $\sigma'$. For each permutation
$\gamma\in\S_n$, set
\begin{align*}
G^{(n_1,n_2)}_1(\gamma)&= \{(i,j)\;|\;N_2<i<j\leq
\gamma(i)<\gamma(j)
\quad\text{or}\quad i>j>\gamma(i)>\gamma(j)>N_2\},\\
G^{(n_1,n_2)}_2(\gamma)&= \{(i,j)\;|\;i<j< \gamma(i)<\gamma(j)\leq
N_2
\quad\text{or}\quad N_2\geq i>j>\gamma(i)>\gamma(j)\},\\
G^{(n_1,n_2)}_3(\gamma)&=\{(i,j)\;|\;i<j\leq N_2<\gamma(i)<\gamma(j)
\quad\text{or}\quad i>j>N_2\geq\gamma(i)>\gamma(j)\},\\
G^{(n_1,n_2)}_4(\gamma)&=\{(i,j)\;|\;i\leq
N_2<j\leq\gamma(i)<\gamma(j)
\quad\text{or}\quad i>j>\gamma(i)>N_2\geq \gamma(j)\},\\
G^{(n_1,n_2)}_5(\gamma)&=\{(i,j)\;|\;i<j\leq\gamma(i)\leq
N_2<\gamma(j) \quad\text{or}\quad i>N_2\geq j>\gamma(i)>\gamma(j)\}.
\end{align*}

Clearly, for any $\gamma\in\S^{\,(n_1,n_2)}_n$, we have
$\cro(\gamma)=\sum_{i=1}^{5}|G_i^{(n_1,n_2)}(\gamma)|$.
 In particular,
\begin{align}\label{eq:decomp Sn1n2-Sn2n1}
\cro(\sigma)=\sum_{i=1}^{5}|G_i^{(n_1,n_2)}(\sigma)|\quad\text{and}\quad
\cro(\sigma')=\sum_{i=1}^{5}|G_i^{(n_2,n_1)}(\sigma')|.
\end{align}

 The crossings
of  $G_i^{(n_1,n_2)}$'s and $G_i^{(n_2,n_1)}$'s are illustrated in
Table~\ref{tab:forme-cr-2}. By  the definition of
$\Gamma^{(n_1,n_2)}$,  it is readily seen (see Row~1 in
Table~\ref{tab:effet Gamma sur cr}) that
$G^{(n_1,n_2)}_1(\sigma)=G^{(n_2,n_1)}_1(\sigma')$ and thus
$|G^{(n_1,n_2)}_1(\sigma)|=|G^{(n_2,n_1)}_1(\sigma')|$. Similarly, we can prove (see Table~\ref{tab:effet Gamma sur cr})
that $|G^{(n_1,n_2)}_i(\sigma)|=|G^{(n_2,n_1)}_i(\sigma')|$ for
$i=2,3,4$. It remains to prove that
$|G^{(n_1,n_2)}_5(\sigma)|=|G^{(n_2,n_1)}_5(\sigma')|$. This  will
follow from the following lemma.

\begin{table}[h]%crmap
$$
\begin{array}{|ccc|}
\hline
\sigma&\longrightarrow& \sigma'\\
 \hline
%%%%%%%%%%%%%%%%%%%%%%%%%%%%%%%%%%%%%%%%%%%%%%%%%%%%%%%%%%%%%%%%%%%%%%%%%%%%%%%%%%%%%%%%%%%%%%%%%%%%%% type 1
\hspace{0.5cm}{\setlength{\unitlength}{0.6mm}\begin{picture}(80,15)(0,-5)
\put(0,0){\line(1,0){15}}\put(25,0){\line(1,0){55}}
\put(0,0){\circle*{1,3}}\put(0,0){\makebox(-2,-6)[c]{\tiny 1}}
\put(15,0){\circle*{1,3}}\put(15,0){\makebox(-2,6)[c]{\tiny $N_2$}}
\put(80,0){\circle*{1,3}}\put(80,0){\makebox(2,6)[c]{\tiny $n$}}
\qbezier(30,0)(46,10)(63,0) \qbezier(47,0)(60,10)(75,0)
\put(30,0){\circle*{1,3}}\put(30,0){\makebox(-2,-6)[c]{\tiny $i$}}
\put(63,0){\circle*{1,3}}\put(63,0){\makebox(-2,-6)[c]{\tiny$\sigma_i$}}
\put(47,0){\circle*{1,3}}\put(47,0){\makebox(-2,-6)[c]{\tiny $j$}}
\put(75,0){\circle*{1,3}}\put(75,0){\makebox(-2,-6)[c]{\tiny$\sigma_j$}}
\end{picture}}&&
{\setlength{\unitlength}{0.6mm}\begin{picture}(80,15)(0,-5)
\put(0,0){\line(1,0){15}}\put(25,0){\line(1,0){55}}
\put(0,0){\circle*{1,3}}\put(0,0){\makebox(-2,-6)[c]{\tiny 1}}
\put(15,0){\circle*{1,3}}\put(15,0){\makebox(-2,6)[c]{\tiny $N_2$}}
\put(80,0){\circle*{1,3}}\put(80,0){\makebox(2,6)[c]{\tiny $n$}}
\qbezier(30,0)(46,10)(63,0) \qbezier(47,0)(60,10)(75,0)
\put(30,0){\circle*{1,3}}\put(30,0){\makebox(-2,-6)[c]{\tiny $i$}}
\put(63,0){\circle*{1,3}}\put(63,0){\makebox(-2,-6)[c]{\tiny$\sigma_i$}}
\put(47,0){\circle*{1,3}}\put(47,0){\makebox(-2,-6)[c]{\tiny $j$}}
\put(75,0){\circle*{1,3}}\put(75,0){\makebox(-2,-6)[c]{\tiny$\sigma_j$}}
\end{picture}}\hspace{0.5cm}\\
%%%%%%%%%%%%%%%%%%%%%%%%%%%%%%%%%%%%% b
\hspace{0.5cm}{\setlength{\unitlength}{0.6mm}\begin{picture}(80,15)(0,-5)
\put(0,0){\line(1,0){15}}\put(25,0){\line(1,0){55}}
\put(0,0){\circle*{1,3}}\put(0,0){\makebox(-2,-6)[c]{\tiny 1}}
\put(15,0){\circle*{1,3}}\put(15,0){\makebox(-2,6)[c]{\tiny $N_2$}}
\put(80,0){\circle*{1,3}}\put(80,0){\makebox(2,6)[c]{\tiny $n$}}
\qbezier(35,0)(43,10)(52,0) \qbezier(52,0)(61,10)(70,0)
\put(35,0){\circle*{1,3}}\put(35,0){\makebox(-2,-6)[c]{\tiny $i$}}
\put(52,0){\circle*{1,3}}\put(52,0){\makebox(-2,-6)[c]{\tiny$j$}}
\put(70,0){\circle*{1,3}}\put(70,0){\makebox(-2,-6)[c]{\tiny$\sigma_j$}}
\end{picture}}&&
{\setlength{\unitlength}{0.6mm}\begin{picture}(80,15)(0,-5)
\put(0,0){\line(1,0){15}}\put(25,0){\line(1,0){55}}
\put(0,0){\circle*{1,3}}\put(0,0){\makebox(-2,-6)[c]{\tiny 1}}
\put(15,0){\circle*{1,3}}\put(15,0){\makebox(-2,6)[c]{\tiny $N_2$}}
\put(80,0){\circle*{1,3}}\put(80,0){\makebox(2,6)[c]{\tiny $n$}}
\qbezier(35,0)(43,10)(52,0) \qbezier(52,0)(61,10)(70,0)
\put(35,0){\circle*{1,3}}\put(35,0){\makebox(-2,-6)[c]{\tiny $i$}}
\put(52,0){\circle*{1,3}}\put(52,0){\makebox(-2,-6)[c]{\tiny$j$}}
\put(70,0){\circle*{1,3}}\put(70,0){\makebox(-2,-6)[c]{\tiny$\sigma_j$}}
\end{picture}}\hspace{0.5cm}\\
%%%%%%%%%%%%%%%%%%%%%%%%%%%%%%%%%%%%%%%%%%%%%%%%%% c
\hspace{0.5cm}{\setlength{\unitlength}{0.6mm}
\begin{picture}(80,15)(0,-7)
\put(0,0){\line(1,0){15}}\put(25,0){\line(1,0){55}}
\put(0,0){\circle*{1,3}}\put(0,0){\makebox(-2,-6)[c]{\tiny 1}}
\put(15,0){\circle*{1,3}}\put(15,0){\makebox(-2,6)[c]{\tiny $N_2$}}
\put(80,0){\circle*{1,3}}\put(80,0){\makebox(2,-6)[c]{\tiny $n$}}
\qbezier(30,0)(46,-10)(63,0) \qbezier(47,0)(60,-10)(75,0)
\put(30,0){\circle*{1,3}}\put(30,0){\makebox(-2,6)[c]{\tiny$\sigma_j$}}
\put(63,0){\circle*{1,3}}\put(63,0){\makebox(-2,6)[c]{\tiny$j$}}
\put(47,0){\circle*{1,3}}\put(47,0){\makebox(-2,6)[c]{\tiny$\sigma_i$}}
\put(75,0){\circle*{1,3}}\put(75,0){\makebox(-2,6)[c]{\tiny$i$}}
\end{picture}}&&{\setlength{\unitlength}{0.6mm}
\begin{picture}(80,15)(0,-7)
\put(0,0){\line(1,0){15}}\put(25,0){\line(1,0){55}}
\put(0,0){\circle*{1,3}}\put(0,0){\makebox(-2,-6)[c]{\tiny 1}}
\put(15,0){\circle*{1,3}}\put(15,0){\makebox(-2,6)[c]{\tiny $N_2$}}
\put(80,0){\circle*{1,3}}\put(80,0){\makebox(2,-6)[c]{\tiny $n$}}
\qbezier(30,0)(46,-10)(63,0) \qbezier(47,0)(60,-10)(75,0)
\put(30,0){\circle*{1,3}}\put(30,0){\makebox(-2,6)[c]{\tiny$\sigma_j$}}
\put(63,0){\circle*{1,3}}\put(63,0){\makebox(-2,6)[c]{\tiny$j$}}
\put(47,0){\circle*{1,3}}\put(47,0){\makebox(-2,6)[c]{\tiny$\sigma_i$}}
\put(75,0){\circle*{1,3}}\put(75,0){\makebox(-2,6)[c]{\tiny$i$}}
\end{picture}}\hspace{0.5cm}\\
\hline
%%%%%%%%%%%%%%%%%%%%%%%%%%%%%%%%%%%%%%%%%%%%%%%%%%%%%%%%%%%%%%%%%%%%%%%%%%%%%%%%%%%%%%%%%%%%%%%%%%%%%%%%%%%% type 2
\hspace{0.5cm}{\setlength{\unitlength}{0.6mm}
\begin{picture}(80,15)(0,-5)
\put(0,0){\line(1,0){20}}\put(25,0){\line(1,0){35}}\put(70,0){\line(1,0){10}}
\put(0,0){\circle*{1,3}}\put(0,0){\makebox(-2,-6)[c]{\tiny 1}}
\put(60,0){\circle*{1,3}}\put(60,0){\makebox(2,6)[c]{\tiny $N_2$}}
\put(80,0){\circle*{1,3}}\put(80,0){\makebox(-2,-6)[c]{\tiny $n$}}
\put(20,0){\circle*{1,3}}\put(20,0){\makebox(-2,-6)[c]{\tiny $n_1$}}
\qbezier(5,0)(17,10)(30,0) \qbezier(13,0)(30,10)(50,0)
\put(5,0){\circle*{1,3}}\put(5,0){\makebox(-2,-6)[c]{\tiny $i_s$}}
\put(30,0){\circle*{1,3}}\put(30,0){\makebox(5,-6)[c]{\tiny$N_2$+$1$-$j_s$}}
\put(13,0){\circle*{1,3}}\put(13,0){\makebox(-2,-6)[c]{\tiny $i_t$}}
\put(50,0){\circle*{1,3}}\put(50,0){\makebox(4,-6)[c]{\tiny$N_2$+$1$-$j_t$}}
\end{picture}}&&
{\setlength{\unitlength}{0.6mm}
\begin{picture}(80,15)(0,-5)
\put(0,0){\line(1,0){20}}\put(25,0){\line(1,0){35}}\put(70,0){\line(1,0){10}}
\put(0,0){\circle*{1,3}}\put(0,0){\makebox(-2,-6)[c]{\tiny 1}}
\put(60,0){\circle*{1,3}}\put(60,0){\makebox(2,6)[c]{\tiny $N_2$}}
\put(80,0){\circle*{1,3}}\put(80,0){\makebox(-2,-6)[c]{\tiny $n$}}
\put(20,0){\circle*{1,3}}\put(20,0){\makebox(-2,-6)[c]{\tiny $n_2$}}
\qbezier(5,0)(17,10)(30,0) \qbezier(13,0)(30,10)(50,0)
\put(5,0){\circle*{1,3}}\put(5,0){\makebox(-2,-6)[c]{\tiny $j_t$}}
\put(30,0){\circle*{1,3}}\put(30,0){\makebox(5,-6)[c]{\tiny$N_2$+$1$-$i_t$}}
\put(13,0){\circle*{1,3}}\put(13,0){\makebox(-2,-6)[c]{\tiny $j_s$}}
\put(50,0){\circle*{1,3}}\put(50,0){\makebox(4,-6)[c]{\tiny$N_2$+$1$-$i_s$}}
\end{picture}}\hspace{0.5cm}\\
%%%%%%%%%%%%%%%%%%%%%%%%%%%%%%%%%%%%%%%%%%%%%%%%%%% b
\hspace{0.5cm}{\setlength{\unitlength}{0.6mm}
\begin{picture}(80,15)(0,-7)
\put(0,0){\line(1,0){20}}\put(25,0){\line(1,0){35}}\put(70,0){\line(1,0){10}}
\put(0,0){\circle*{1,3}}\put(0,0){\makebox(-2,-6)[c]{\tiny 1}}
\put(60,0){\circle*{1,3}}\put(60,0){\makebox(2,-6)[c]{\tiny $N_2$}}
\put(80,0){\circle*{1,3}}\put(80,0){\makebox(-2,-6)[c]{\tiny $n$}}
\put(20,0){\circle*{1,3}}\put(20,0){\makebox(-2,6)[c]{\tiny $n_1$}}
\qbezier(5,0)(17,-10)(30,0) \qbezier(13,0)(30,-10)(50,0)
\put(5,0){\circle*{1,3}}\put(5,0){\makebox(-2,6)[c]{\tiny $k_s$}}
\put(30,0){\circle*{1,3}}\put(30,0){\makebox(5,6)[c]{\tiny$N_2$+$1$-$\ell_s$}}
\put(13,0){\circle*{1,3}}\put(13,0){\makebox(-2,6)[c]{\tiny $k_t$}}
\put(50,0){\circle*{1,3}}\put(50,0){\makebox(4,6)[c]{\tiny$N_2$+$1$-$\ell_t$}}
\end{picture}}&&
{\setlength{\unitlength}{0.6mm}
\begin{picture}(80,15)(0,-7)
\put(0,0){\line(1,0){20}}\put(25,0){\line(1,0){35}}\put(70,0){\line(1,0){10}}
\put(0,0){\circle*{1,3}}\put(0,0){\makebox(-2,-6)[c]{\tiny 1}}
\put(60,0){\circle*{1,3}}\put(60,0){\makebox(2,-6)[c]{\tiny $N_2$}}
\put(80,0){\circle*{1,3}}\put(80,0){\makebox(-2,-6)[c]{\tiny $n$}}
\put(20,0){\circle*{1,3}}\put(20,0){\makebox(-2,6)[c]{\tiny $n_2$}}
\qbezier(5,0)(17,-10)(30,0) \qbezier(13,0)(30,-10)(50,0)
\put(5,0){\circle*{1,3}}\put(5,0){\makebox(-2,6)[c]{\tiny $\ell_t$}}
\put(30,0){\circle*{1,3}}\put(30,0){\makebox(5,6)[c]{\tiny$N_2$+$1$-$k_t$}}
\put(13,0){\circle*{1,3}}\put(13,0){\makebox(-2,6)[c]{\tiny$\ell_s$}}
\put(50,0){\circle*{1,3}}\put(50,0){\makebox(4,6)[c]{\tiny$N_2$+$1$-$k_s$}}
\end{picture}
}\hspace{0.5cm}\\
\hline
%%%%%%%%%%%%%%%%%%%%%%%%%%%%%%%%%%%%%%%%%%%%%%%%%%%%%%%%%%%%%%%%%%%%%%%%%%%%%%%%%%%%%%%%%%%%%%%%%%%%%%% type 3
\hspace{0.5cm}{\setlength{\unitlength}{0.6mm}\begin{picture}(80,15)(0,-5)
\put(0,0){\line(1,0){35}}\put(45,0){\line(1,0){35}}
\put(0,0){\circle*{1,3}}\put(0,0){\makebox(-2,-6)[c]{\tiny 1}}
\put(35,0){\circle*{1,3}}\put(35,0){\makebox(-2,-6)[c]{\tiny $N_2$}}
\put(80,0){\circle*{1,3}}\put(80,0){\makebox(2,6)[c]{\tiny $n$}}
\qbezier(12,0)(35,12)(57,0) \qbezier(24,0)(47,12)(69,0)
\put(12,0){\circle*{1,3}}\put(12,0){\makebox(-2,-6)[c]{\tiny $c_i$}}
\put(24,0){\circle*{1,3}}\put(24,0){\makebox(-2,-6)[c]{\tiny$c_j$}}
\put(57,0){\circle*{1,3}}\put(57,0){\makebox(-2,-6)[c]{\tiny
$r_{\alpha_i}$}}
\put(69,0){\circle*{1,3}}\put(69,0){\makebox(-2,-6)[c]{\tiny$r_{\alpha_j}$}}
\end{picture}}&&
{\setlength{\unitlength}{0.6mm}\begin{picture}(80,15)(0,-5)
\put(0,0){\line(1,0){35}}\put(45,0){\line(1,0){35}}
\put(0,0){\circle*{1,3}}\put(0,0){\makebox(-2,-6)[c]{\tiny 1}}
\put(35,0){\circle*{1,3}}\put(35,0){\makebox(-2,-6)[c]{\tiny $N_2$}}
\put(80,0){\circle*{1,3}}\put(80,0){\makebox(2,6)[c]{\tiny $n$}}
\qbezier(12,0)(35,12)(57,0) \qbezier(24,0)(47,12)(69,0)
\put(12,0){\circle*{1,3}}\put(12,0){\makebox(-2,-6)[c]{\tiny $e_i$}}
\put(24,0){\circle*{1,3}}\put(24,0){\makebox(-2,-6)[c]{\tiny$e_j$}}
\put(57,0){\circle*{1,3}}\put(57,0){\makebox(-2,-6)[c]{\tiny
$r_{\alpha_i}$}}
\put(69,0){\circle*{1,3}}\put(69,0){\makebox(-2,-6)[c]{\tiny$r_{\alpha_j}$}}
\end{picture}}\hspace{0.5cm}\\
%%%%%%%%%%%%%%%%%%%%%%%%%%%%%%%%%%%%%%%%%%%%% b
\hspace{0.5cm}{\setlength{\unitlength}{0.6mm}\begin{picture}(80,15)(0,-7)
\put(0,0){\line(1,0){35}}\put(45,0){\line(1,0){35}}
\put(0,0){\circle*{1,3}}\put(0,0){\makebox(-2,-6)[c]{\tiny 1}}
\put(35,0){\circle*{1,3}}\put(35,0){\makebox(-2,6)[c]{\tiny $N_2$}}
\put(80,0){\circle*{1,3}}\put(80,0){\makebox(2,6)[c]{\tiny $n$}}
\qbezier(12,0)(35,-12)(57,0) \qbezier(24,0)(47,-12)(69,0)
\put(12,0){\circle*{1,3}}\put(12,0){\makebox(-2,6)[c]{\tiny $d_i$}}
\put(24,0){\circle*{1,3}}\put(24,0){\makebox(-2,6)[c]{\tiny$d_j$}}
\put(57,0){\circle*{1,3}}\put(57,0){\makebox(-2,6)[c]{\tiny
$s_{\beta_i}$}}
\put(69,0){\circle*{1,3}}\put(69,0){\makebox(-2,6)[c]{\tiny$s_{\beta_j}$}}
\end{picture}}&&
{\setlength{\unitlength}{0.6mm}\begin{picture}(80,15)(0,-7)
\put(0,0){\line(1,0){35}}\put(45,0){\line(1,0){35}}
\put(0,0){\circle*{1,3}}\put(0,0){\makebox(-2,-6)[c]{\tiny 1}}
\put(35,0){\circle*{1,3}}\put(35,0){\makebox(-2,6)[c]{\tiny $N_2$}}
\put(80,0){\circle*{1,3}}\put(80,0){\makebox(2,6)[c]{\tiny $n$}}
\qbezier(12,0)(35,-12)(57,0) \qbezier(24,0)(47,-12)(69,0)
\put(12,0){\circle*{1,3}}\put(12,0){\makebox(-2,6)[c]{\tiny $f_i$}}
\put(24,0){\circle*{1,3}}\put(24,0){\makebox(-2,6)[c]{\tiny$f_j$}}
\put(57,0){\circle*{1,3}}\put(57,0){\makebox(-2,6)[c]{\tiny
$s_{\beta_i}$}}
\put(69,0){\circle*{1,3}}\put(69,0){\makebox(-2,6)[c]{\tiny$s_{\beta_j}$}}
\end{picture}}\hspace{0.5cm}\\
\hline
%%%%%%%%%%%%%%%%%%%%%%%%%%%%%%%%%%%%%%%%%%%%%%%%%%%%%%%%%%%%%%%%%%%%%%%%%%%%%%%%%%%%%%%%%%%%%%%%%%% type 4
\hspace{0.5cm}{\setlength{\unitlength}{0.6mm}\begin{picture}(80,15)(0,-5)
\put(0,0){\line(1,0){15}}\put(25,0){\line(1,0){55}}
\put(0,0){\circle*{1,3}}\put(0,0){\makebox(-2,-6)[c]{\tiny 1}}
\put(15,0){\circle*{1,3}}\put(15,0){\makebox(-2,-6)[c]{\tiny $N_2$}}
\put(80,0){\circle*{1,3}}\put(80,0){\makebox(2,6)[c]{\tiny $n$}}
\qbezier(7,0)(28,12)(50,0) \qbezier(30,0)(50,10)(70,0)
\put(7,0){\circle*{1,3}}\put(7,0){\makebox(-2,-6)[c]{\tiny $c_i$}}
\put(50,0){\circle*{1,3}}\put(50,0){\makebox(-2,-6)[c]{\tiny$r_{\alpha_i}$}}
\put(30,0){\circle*{1,3}}\put(30,0){\makebox(-2,-6)[c]{\tiny $j$}}
\put(70,0){\circle*{1,3}}\put(70,0){\makebox(-2,-6)[c]{\tiny$\sigma_j$}}
\end{picture}}&&
{\setlength{\unitlength}{0.6mm}\begin{picture}(80,15)(0,-5)
\put(0,0){\line(1,0){15}}\put(25,0){\line(1,0){55}}
\put(0,0){\circle*{1,3}}\put(0,0){\makebox(-2,-6)[c]{\tiny 1}}
\put(15,0){\circle*{1,3}}\put(15,0){\makebox(-2,-6)[c]{\tiny $N_2$}}
\put(80,0){\circle*{1,3}}\put(80,0){\makebox(2,6)[c]{\tiny $n$}}
\qbezier(7,0)(28,12)(50,0) \qbezier(30,0)(50,10)(70,0)
\put(7,0){\circle*{1,3}}\put(7,0){\makebox(-2,-6)[c]{\tiny $e_i$}}
\put(50,0){\circle*{1,3}}\put(50,0){\makebox(-2,-6)[c]{\tiny$r_{\alpha_i}$}}
\put(30,0){\circle*{1,3}}\put(30,0){\makebox(-2,-6)[c]{\tiny $j$}}
\put(70,0){\circle*{1,3}}\put(70,0){\makebox(-2,-6)[c]{\tiny$\sigma_j$}}
\end{picture}}\hspace{0.5cm}\\
%%%%%%%%%%%%%%%%%%%%%%%%%%%%%%%%%%%%% b
\hspace{0.5cm}{\setlength{\unitlength}{0.6mm}\begin{picture}(80,15)(0,-5)
\put(0,0){\line(1,0){15}}\put(25,0){\line(1,0){55}}
\put(0,0){\circle*{1,3}}\put(0,0){\makebox(-2,-6)[c]{\tiny 1}}
\put(15,0){\circle*{1,3}}\put(15,0){\makebox(-2,-6)[c]{\tiny $N_2$}}
\put(80,0){\circle*{1,3}}\put(80,0){\makebox(2,6)[c]{\tiny $n$}}
\qbezier(7,0)(24,12)(40,0) \qbezier(40,0)(55,10)(70,0)
\put(7,0){\circle*{1,3}}\put(7,0){\makebox(-2,-6)[c]{\tiny $c_i$}}
\put(40,0){\circle*{1,3}}\put(40,0){\makebox(-2,-6)[c]{\tiny$r_{\alpha_i}$}}
\put(70,0){\circle*{1,3}}\put(70,0){\makebox(-2,-6)[c]{\tiny $j$}}
\end{picture}}&&
{\setlength{\unitlength}{0.6mm}\begin{picture}(80,15)(0,-5)
\put(0,0){\line(1,0){15}}\put(25,0){\line(1,0){55}}
\put(0,0){\circle*{1,3}}\put(0,0){\makebox(-2,-6)[c]{\tiny 1}}
\put(15,0){\circle*{1,3}}\put(15,0){\makebox(-2,-6)[c]{\tiny $N_2$}}
\put(80,0){\circle*{1,3}}\put(80,0){\makebox(2,6)[c]{\tiny $n$}}
\qbezier(7,0)(24,12)(40,0) \qbezier(40,0)(55,10)(70,0)
\put(7,0){\circle*{1,3}}\put(7,0){\makebox(-2,-6)[c]{\tiny $e_i$}}
\put(40,0){\circle*{1,3}}\put(40,0){\makebox(-2,-6)[c]{\tiny$r_{\alpha_i}$}}
\put(70,0){\circle*{1,3}}\put(70,0){\makebox(-2,-6)[c]{\tiny $j$}}
\end{picture}}\hspace{0.5cm}\\
%%%%%%%%%%%%%%%%%%%%%%%%%%%%%%%%%%%%%%%%%%%%%%%%% c
\hspace{0.5cm}{\setlength{\unitlength}{0.6mm}\begin{picture}(80,15)(0,-7)
\put(0,0){\line(1,0){15}}\put(25,0){\line(1,0){55}}
\put(0,0){\circle*{1,3}}\put(0,0){\makebox(-2,6)[c]{\tiny 1}}
\put(15,0){\circle*{1,3}}\put(15,0){\makebox(-2,6)[c]{\tiny $N_2$}}
\put(80,0){\circle*{1,3}}\put(80,0){\makebox(2,6)[c]{\tiny $n$}}
\qbezier(7,0)(28,-12)(50,0) \qbezier(30,0)(50,-10)(70,0)
\put(7,0){\circle*{1,3}}\put(7,0){\makebox(-2,6)[c]{\tiny $d_i$}}
\put(50,0){\circle*{1,3}}\put(50,0){\makebox(-2,6)[c]{\tiny$s_{\beta_i}$}}
\put(30,0){\circle*{1,3}}\put(30,0){\makebox(-2,6)[c]{\tiny$\sigma_j$}}
\put(70,0){\circle*{1,3}}\put(70,0){\makebox(-2,6)[c]{\tiny$j$}}
\end{picture}}&&
{\setlength{\unitlength}{0.6mm}\begin{picture}(80,15)(0,-7)
\put(0,0){\line(1,0){15}}\put(25,0){\line(1,0){55}}
\put(0,0){\circle*{1,3}}\put(0,0){\makebox(-2,6)[c]{\tiny 1}}
\put(15,0){\circle*{1,3}}\put(15,0){\makebox(-2,6)[c]{\tiny $N_2$}}
\put(80,0){\circle*{1,3}}\put(80,0){\makebox(2,6)[c]{\tiny $n$}}
\qbezier(7,0)(28,-12)(50,0) \qbezier(30,0)(50,-10)(70,0)
\put(7,0){\circle*{1,3}}\put(7,0){\makebox(-2,6)[c]{\tiny $f_i$}}
\put(50,0){\circle*{1,3}}\put(50,0){\makebox(-2,6)[c]{\tiny$s_{\beta_i}$}}
\put(30,0){\circle*{1,3}}\put(30,0){\makebox(-2,6)[c]{\tiny$\sigma_j$}}
\put(70,0){\circle*{1,3}}\put(70,0){\makebox(-2,6)[c]{\tiny$j$}}
\end{picture}}\hspace{0.5cm}\\
\hline
\end{array}
$$
\caption{Effects of the mapping $\Gamma^{(n_1,n_2)}$ on the
crossings of $\sigma$ and $\sigma'$.}\label{tab:effet Gamma sur cr}
\end{table}

\begin{lem}\label{lem:lemme pour lemme 2}
Let $n_1,n_2$ and $n$ be positive integers with $N_2\leq n$ and
$\gamma\in{\S_{n}^{\,(n_1,n_2)}}$. Suppose that
\begin{align}
B(\gamma):=&\{(i,\gamma(i))\;|\;i<\gamma(i)\leq N_2\}=\{(i_1,j_1),(i_2,j_2),\ldots,(i_p,j_p)\},\label{eq:B1}\\
B(\gamma^{-1})=&\{(\gamma(i),i)\;|\;\gamma(i)<i\leq
N_2\}=\{(k_1,\ell_1),(k_2,\ell_2),\ldots,(k_q,\ell_q)\},\label{eq:B2}
\end{align}
with $ i_1<i_2<\cdots<i_p$ and $k_1<k_2<\cdots<k_q$. Then we have
\begin{equation}\label{eq:lemG5}
|G^{(n_1,n_2)}_5(\gamma)|=\sum_{r=1}^p(j_r-i_r)+\sum_{r=1}^q(\ell_r-k_r-1)-{p+q\choose2}.
\end{equation}
\end{lem}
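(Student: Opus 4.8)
The plan is to unpack the definition of $G^{(n_1,n_2)}_5(\gamma)$ into two disjoint families of pairs, rewrite the resulting cardinality as a sum of interval lengths corrected by some indicator counts, and then evaluate the correction using the structural fact that in $\S_{n}^{\,(n_1,n_2)}$ every internal arc joins $[1,n_1]$ to $[n_1+1,N_2]$.

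First, put $C=\{m\in[1,N_2]:\gamma(m)>N_2\}$ and $D=\{m\in[1,N_2]:\gamma^{-1}(m)>N_2\}$. I would check that a pair $(i,j)$ lies in the first alternative of $G^{(n_1,n_2)}_5(\gamma)$ exactly when $(i,\gamma(i))=(i_r,j_r)$ for some $r$ and $j\in C$ with $i_r<j\le j_r$, and in the second alternative exactly when $(\gamma(j),j)=(k_s,\ell_s)$ for some $s$ and $\gamma(i)\in D$ with $k_s<\gamma(i)<\ell_s$; the two alternatives are disjoint since the first forces $i<j$ and the second $i>j$. Summing over the internal arcs this gives
\begin{equation*}
|G^{(n_1,n_2)}_5(\gamma)|=\sum_{r=1}^p\big|C\cap(i_r,j_r]\big|+\sum_{s=1}^q\big|D\cap(k_s,\ell_s)\big|.
\end{equation*}
This bookkeeping --- in particular deciding which interval is half-open and which is open, an asymmetry inherited from the ``$j\le\gamma(i)$'' in the first alternative against the strict inequalities in the second --- is the step I expect to need the most care, since it is exactly what produces the $\ell_r-k_r-1$ against $j_r-i_r$ in \eqref{eq:lemG5}; each equality case should be verified explicitly.

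Next, since $\gamma\in\S_{n}^{\,(n_1,n_2)}$ has no fixed point in $[1,N_2]$, classifying each $m\in[1,N_2]$ according to the arc issuing from it yields the disjoint union $[1,N_2]=\{i_1,\dots,i_p\}\sqcup C\sqcup\{\ell_1,\dots,\ell_q\}$, and dually (classifying by the arc arriving at $m$) $[1,N_2]=\{j_1,\dots,j_p\}\sqcup D\sqcup\{k_1,\dots,k_q\}$. Writing $|C\cap(i_r,j_r]|=(j_r-i_r)-|\{i_{r'}\}\cap(i_r,j_r]|-|\{\ell_s\}\cap(i_r,j_r]|$ and the analogous expansion for $D$ then reduces \eqref{eq:lemG5} to the purely order-theoretic identity
\begin{equation*}
\#\{(r,r'):i_r<i_{r'}\le j_r\}+\#\{(r,s):i_r<\ell_s\le j_r\}+\#\{(s,r):k_s<j_r<\ell_s\}+\#\{(s,s'):k_s<k_{s'}<\ell_s\}=\binom{p+q}{2}.
\end{equation*}

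Finally I would use the observation that an internal above-line arc $(i,\gamma(i))$ with $i<\gamma(i)\le N_2$ must satisfy $i\le n_1<\gamma(i)$, and likewise an internal below-line arc has its smaller endpoint in $[1,n_1]$ and its larger endpoint in $[n_1+1,N_2]$ --- otherwise the arc would lie inside $[1,n_1]$ or inside $[n_1+1,N_2]$, which is forbidden. Hence $\{i_r\},\{k_s\}\subseteq[1,n_1]$ and $\{j_r\},\{\ell_s\}\subseteq[n_1+1,N_2]$, so that $i_{r'}\le n_1<j_r$, $k_{s'}\le n_1<\ell_s$, $i_r\le n_1<\ell_s$ and $k_s\le n_1<j_r$ hold for all indices. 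Consequently the first sum above equals $\binom p2$ (for each unordered pair exactly the ordering with $i_r<i_{r'}$ contributes $1$), the fourth equals $\binom q2$, and for each pair $(r,s)$ the two middle sums together contribute $\chi(\ell_s\le j_r)+\chi(j_r<\ell_s)=1$, giving $pq$ in all; adding $\binom p2+\binom q2+pq=\binom{p+q}{2}$ yields \eqref{eq:lemG5}. Apart from the first (combinatorial) step, the argument rests entirely on this one structural remark about internal arcs.
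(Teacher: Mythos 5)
Your proof is correct and follows essentially the same route as the paper's: both parameterize the pairs in $G^{(n_1,n_2)}_5(\gamma)$ by the internal arcs, count the relevant points of $C$ (resp.\ $D$) in each interval as its length minus the internal-arc endpoints it contains, and evaluate those corrections to $\binom{p}{2}+\binom{q}{2}+pq=\binom{p+q}{2}$ via the key structural fact that every internal arc has its smaller endpoint in $[1,n_1]$ and its larger one in $[n_1+1,N_2]$. The only difference is organizational and harmless: by working with the half-open interval $(i_r,j_r]$ you absorb the boundary case $j=\gamma(i)$, which the paper instead splits off as the separate term $|\{i\;|\;i<\gamma(i)\leq N_2<\gamma^{2}(i)\}|$ and recombines only at the end, so your pairing $\chi(\ell_s\le j_r)+\chi(j_r<\ell_s)=1$ arrives at the cross term $pq$ slightly more directly.
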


 Indeed, suppose
\begin{align*}
B(\sigma)&=\{(i_1,N_2+1-j_1),\ldots,(i_p,N_2+1-j_p)\},\\
B(\sigma^{-1})&=\{(k_1,N_2+1-\ell_{1}),\ldots,(k_q,N_2+1-\ell_{q})\},
\end{align*}
then, by construction of $\sigma'$, we have
\begin{align*}
B(\sigma')&=\{(j_1,N_2+1-i_1),\ldots,(j_p,N_2+1-i_p)\},\\
B(\sigma'^{\,-1})&=\{(\ell_{1},N_2+1-k_1),\ldots,(\ell_{q},N_2+1-k_q)\}.
\end{align*}
By symmetry, the identity $\eqref{eq:lemG5}$ is also valid on
${\S_{n}^{(n_2,n_1)}}$. Applying $\eqref{eq:lemG5}$ to $\sigma'$ and
${\sigma}$ leads to
$|G^{(n_1,n_2)}_5(\sigma)|=|G^{(n_2,n_1)}_5(\sigma')|$. The proof of Lemma~\ref{prop:Gamma} is thus completed.

 \noindent{\bf Proof of
Lemma~\ref{lem:lemme pour lemme 2}.} For any $\gamma\in {\S_{n}^{\,(n_2,n_1)}}$, by definition, we have
\begin{align}
|G^{(n_1,n_2)}_5(\gamma)|&=|\{(i,j)\;|\;i<j<\gamma(i)\leq
N_2<\gamma(j)\}|+ |\{(i,j)\;|\;\gamma(j)<\gamma(i)<j\leq N_2<i\}|\nonumber\\
&\quad + |\{i\;|\;i<\gamma(i)\leq N_2<\gamma^2(i)\}|.\label{eq:G5}
\end{align}

Now, by the definition of $B(\gamma)$
we get
 \begin{align*}
&|\{(i,j)\;|\;i<j<\gamma(i)\leq N_2<\gamma(j)\}|\nonumber\\
&=
\sum_{r=1}^p|\{x\;|\;i_r<x<j_{r}\leq
N_2<\gamma(x)\}|\\
&=\sum_{r=1}^p(|\{x\;|\;i_r<x< j_{r}\}|-|\{x\;|\;i_r<x<
j_{r}\,,\,\gamma(x)\leq N_2\}|).
\end{align*}
For any $r\in [1,p]$,  we have $|\{x\;|\;i_r<x<
j_{r}\}|=j_r-i_r-1$ and
\begin{align*}
&|\{x\;|\;i_r<x< j_{r}\,,\,\gamma(x)\leq N_2\}|\\
=&|\{x\;|\;i_r<x< j_{r}\,,\,x<\gamma(x)\leq N_2\}|+|\{x\;|\;i_r<x<
j_{r}\,,\,\gamma(x)<x\leq
N_2\}|\\
=&|\{t\;|\;i_r<i_t< j_{r}\}| +|\{t\;|\;i_r<\ell_t<
j_{r}\}|\quad\text{(by definition of $B(\gamma)$ and $B(\gamma^{-1})$)}\\
=&|\{t\;|\;i_r<i_t\}|+|\{t\;|\;\ell_t< j_{r}\}|,
\end{align*}
because,  by definition of $\S_{n}^{(n_1,n_2)}$, \eqref{eq:B1} and \eqref{eq:B2},  for any
integers $r$ and $t$, we have $i_t\leq n_1$, $k_t\leq n_1$, $j_r>n_1$ and
$\ell_t>n_1$,  therefore  $i_t<j_r$ and $i_r<\ell_t$.

Summing over all $r$ yields
\begin{equation}\label{eq:G5a}
|\{(i,j)\;|\;i<j<\gamma(i)\leq
N_2<\gamma(j)\}|=\sum_{r=1}^p(j_r-i_r-1-|\{t\;|\;i_r<i_t\}|-|\{t\;|\;\ell_t<
j_{r}\}|).
\end{equation}
It follows  that
 \begin{align}
&|\{(i,j)\;|\;\gamma(j)<\gamma(i)<j\leq N_2<i\}|\nonumber\\
&=|\{(i,j)\;|\;i<j<\gamma^{-1}(i)\leq N_2<\gamma^{-1}(j)\}|\nonumber\\
&=\sum_{r=1}^q(\ell_r-k_r-1-|\{t\;|\;k_r<k_t\}|-|\{t\;|\;j_t<
\ell_{r}\}|).\label{eq:G5b}
\end{align}
As  $|\{i\;|\;i<\gamma(i)\leq
N_2<\gamma^2(i)\}|=|\{t\;|\;\gamma(j_t)>N_2\}|$, plugging 
\eqref{eq:G5a} and \eqref{eq:G5b} into  \eqref{eq:G5} leads to
\begin{align}
|G^{(n_1,n_2)}_5(\gamma)|&=\sum_{r=1}^p(j_r-i_r-1)
+\sum_{r=1}^q(\ell_r-k_r-1)+|\{t\;|\;\gamma(j_t)>N_2\}|-\sum_{r=1}^p|\{t\;|\;i_r<i_t\}|\nonumber\\
&-\sum_{r=1}^p|\{t\;|\;\ell_t<
j_{r}\}|-\sum_{r=1}^q|\{t\;|\;k_r<k_t\}|-\sum_{r=1}^q|\{t\;|\;j_t<
\ell_{r}\}|.\label{eq:G5-simplifie}
\end{align}

Since the $i_r$'s and $k_r$'s are distinct we have
\begin{align}
\sum_{r=1}^p|\{t\;|\; i_r<i_t\}|={p\choose 2}\quad\text{and}
\quad\sum_{r=1}^q|\{t\;|\;k_r<k_t\}|={q\choose 2}.\label{eq:detail1}
\end{align}

On the other hand,
\begin{align}
\sum_{r=1}^p|\{t\;|\;\ell_t< j_{r}\}|+\sum_{r=1}^q|\{t\;|\;j_t<
\ell_{r}\}|
&=\sum_{r=1}^p|\{t\;|\;\ell_t\neq j_r\}|\nonumber\\
&=pq-|\{t\;|\;j_t\in\{\ell_1,\ell_2,\ldots,\ell_q\}|\nonumber\\
&=pq-\sum_{s=1}^k|\{t\;|\;\gamma(j_t)\leq N_2\}|,\label{eq:detail2}
\end{align}
where the last identity follows from the definitions of $B(\gamma)$
and $B(\gamma^{-1})$. Inserting \eqref{eq:detail1} and
\eqref{eq:detail2} in \eqref{eq:G5-simplifie}  we get
\eqref{eq:lemG5}. This concludes the proof of Lemma~\ref{lem:lemme
pour lemme 2}. \qed

\subsection*{Acknowledgement} This work was partially supported by the French National Research Agency under the grant ANR-08-BLAN-0243-03.

%%%%%%%%%%%%%%%%%%%%%%%%%%

\end{document}